\newcommand{\Josh}[1]{\todo[size=\tiny,inline,color=blue!30]{#1
\\ \hfill --- Josh}}
\theoremstyle{definition}
\newtheorem{theorem}{Theorem}[section]
\newtheorem{lemma}[theorem]{Lemma}
\newtheorem{proposition}[theorem]{Proposition}
\newtheorem{corollary}[theorem]{Corollary}
\newtheorem{definition}[theorem]{Definition}
\newcommand{\Z}{\mathbb{Z}}
\newcommand{\BPD}[1]{\text{BPD}(#1)}
\newcommand{\bpd}[1]{\text{bpd}(#1)}
\newcommand{\co}[1]{\text{co}(#1)}
\definecolor{UF2}{HTML}{FA4616}
\definecolor{UF}{HTML}{0021A5}
\definecolor{Navy}{HTML}{0021A5}
\definecolor{Green}{HTML}{FA4616}
\newcommand{\cross}[1][UF]{\begin{tikzpicture}[scale = .4]
	\draw[lightgray] (0,0) -- (1,0) -- (1,1) -- (0,1) -- (0,0);
	\draw[thick,#1!100!black] (.5,0) -- (.5,1);
	\draw[thick,#1!100!black] (0,.5) -- (1,.5);
\end{tikzpicture}}
\newcommand{\oldhwire}[1][UF]{\begin{tikzpicture}[scale = .4]
	\draw[lightgray] (0,0) -- (1,0) -- (1,1) -- (0,1) -- (0,0);
	\draw[thick,#1!100!black] (0,.5) -- (1,.5);
\end{tikzpicture}}
\newcommand{\oldvwire}[1][UF]{\begin{tikzpicture}[scale = .4]
	\draw[lightgray] (0,0) -- (1,0) -- (1,1) -- (0,1) -- (0,0);
	\draw[thick,#1!100!black] (.5,0) -- (.5,1);
\end{tikzpicture}}
\newcommand{\oldjay}[1][UF]{\begin{tikzpicture}[scale = .4]
	\draw[lightgray] (0,0) -- (1,0) -- (1,1) -- (0,1) -- (0,0);
	\draw[thick,#1!100!black] (0,0.5) to[out=0,in=-90] (0.5,1);
\end{tikzpicture}}
\newcommand{\oldare}[1][UF]{\begin{tikzpicture}[scale = .4]
	\draw[lightgray] (0,0) -- (1,0) -- (1,1) -- (0,1) -- (0,0);
	\draw[thick,#1!100!black] (0.5,0) to[out=90,in=180] (1,0.5);
\end{tikzpicture}}
\newcommand{\inlinenowire}{\begin{tikzpicture}[scale = .3]
	\draw[lightgray] (0,0) -- (1,0) -- (1,1) -- (0,1) -- (0,0);
\end{tikzpicture}}
\newcommand{\olden}[1][UF]{\begin{tikzpicture}[scale = .4]
	\draw[lightgray] (0,0) -- (1,0) -- (1,1) -- (0,1) -- (0,0);
	\draw[thick,#1!100!black] (0,0.5) to[out=0,in=90] (0.5,0);
\end{tikzpicture}}
\newcommand{\oldel}[1][UF]{\begin{tikzpicture}[scale = .4]
	\draw[lightgray] (0,0) -- (1,0) -- (1,1) -- (0,1) -- (0,0);
	\draw[thick,#1!100!black] (0.5,1) to[out=-90,in=180] (1,0.5);
\end{tikzpicture}}
\definecolor{Blue}{RGB}{0,112,192}
\definecolor{Brown}{RGB}{120,65,48}
\definecolor{Chartreuse}{RGB}{121,242,38}
\definecolor{Cyan}{RGB}{95,249,253}
\definecolor{Gold}{RGB}{255,192,0}
\definecolor{Green}{RGB}{0,176,80}
\definecolor{Navy}{HTML}{0021A5}
\definecolor{Orange}{HTML}{FA4616}
\definecolor{Pink}{RGB}{255,51,204}
\definecolor{Plum}{RGB}{142,69,133}
\definecolor{Purple}{RGB}{112,48,160}
\definecolor{Red}{RGB}{255,0,0}
\definecolor{Turquoise}{RGB}{55,191,168}
\newcommand{\grid}[2]{
        \foreach \x in {0,...,#1}{
        \foreach \y in {0,...,#2}{
            \draw[lightgray] (\x-0.5,\y-0.5) -- (\x+0.5,\y-0.5) -- (\x+0.5,\y+0.5) -- (\x-0.5,\y+0.5) -- (\x-0.5,\y-0.5);
        }
    }}
\newcommand{\hwire}[1][UF]{\begin{tikzpicture}[scale = .4]
	\draw[thick,#1!100!black] (-0.01,.5) -- (1.01,.5);
\end{tikzpicture}}
\newcommand{\vwire}[1][UF]{\begin{tikzpicture}[scale = .4]
	\draw[thick,#1!100!black] (.5,-0.01) -- (.5,1.01);
\end{tikzpicture}}
\newcommand{\newire}[1][UF]{\el[#1]}
\newcommand{\swwire}[1][UF]{\en[#1]}
\newcommand{\nowire}{\begin{tikzpicture}[scale = .4]
	\draw[lightgray] (0,0) -- (1,0) -- (1,1) -- (0,1) -- (0,0);
\end{tikzpicture}}
\newcommand{\shade}{\begin{tikzpicture}[scale = .4]
	\draw[fill=lightgray] (0,0) -- (1,0) -- (1,1) -- (0,1) -- (0,0);
\end{tikzpicture}}
\newcommand{\jay}[1][UF]{\begin{tikzpicture}[scale = .4]
	\draw[transparent] (0,0) -- (1,0) -- (1,1) -- (0,1) -- (0,0);
	\draw[thick,#1!100!black] (0,0.5) to[out=0,in=-90] (0.5,1);
\end{tikzpicture}}
\newcommand{\are}[1][UF]{\begin{tikzpicture}[scale = .4]
	\draw[transparent] (0,0) -- (1,0) -- (1,1) -- (0,1) -- (0,0);
	\draw[thick,#1!100!black] (0.5,0) to[out=90,in=180] (1,0.5);
\end{tikzpicture}}
\newcommand{\inlinebump}{\begin{tikzpicture}[scale = .3]
	\draw[lightgray] (0,0) -- (1,0) -- (1,1) -- (0,1) -- (0,0);
	\draw[thick,UF!100!black] (0.5,0) to[out=90,in=180] (1,0.5);
	\draw[thick,UF!100!black] (0,0.5) to[out=0,in=-90] (0.5,1);
\end{tikzpicture}}
\newcommand{\inlineare}{\begin{tikzpicture}[scale = .3]
	\draw[lightgray] (0,0) -- (1,0) -- (1,1) -- (0,1) -- (0,0);
	\draw[thick,UF!100!black] (0.5,0) to[out=90,in=180] (1,0.5);
\end{tikzpicture}}
\newcommand{\inlinejay}{\begin{tikzpicture}[scale = .3]
	\draw[lightgray] (0,0) -- (1,0) -- (1,1) -- (0,1) -- (0,0);
	\draw[thick,UF!100!black] (0,0.5) to[out=0,in=-90] (0.5,1);
\end{tikzpicture}}
\newcommand{\inlinehwire}{\begin{tikzpicture}[scale = .3]
	\draw[lightgray] (0,0) -- (1,0) -- (1,1) -- (0,1) -- (0,0);
	\draw[thick,UF!100!black] (0,.5) -- (1,.5);
\end{tikzpicture}}
\newcommand{\inlinevwire}{\begin{tikzpicture}[scale = .3]
	\draw[lightgray] (0,0) -- (1,0) -- (1,1) -- (0,1) -- (0,0);
	\draw[thick,UF!100!black] (.5,0) -- (.5,1);
\end{tikzpicture}}
\newcommand{\en}[1][UF]{\begin{tikzpicture}[scale = .4]
	\draw[transparent] (0,0) -- (1,0) -- (1,1) -- (0,1) -- (0,0);
	\draw[thick,#1!100!black] (0,0.5) to[out=0,in=90] (0.5,0);
\end{tikzpicture}}
\newcommand{\el}[1][UF]{\begin{tikzpicture}[scale = .4]
	\draw[transparent] (0,0) -- (1,0) -- (1,1) -- (0,1) -- (0,0);
	\draw[thick,#1!100!black] (0.5,1) to[out=-90,in=180] (1,0.5);
\end{tikzpicture}}
\begin{document}

\title[]{Permutations with only reduced co-BPDs}

\author[Arroyo]{Joshua Arroyo}
\address{Department of Mathematics,  University of Florida, Gainesville, FL 32611}
\email[Corresponding author]{joshuaarroyo@ufl.edu}

\author[Gregory]{Adam Gregory}
\address{Mathematics and Computer Science Department, Western Carolina University, Cullowhee, NC 28779}
\email{gregory@wcu.edu}

\date{}
\begin{abstract}
Bumpless pipe dreams (BPDs) are combinatorial objects used in the study of Schubert and Grothendieck polynomials. 
Weigandt recently introduced a co-BPD object associated to each BPD and used them to give an analogue to the change of bases formulas of Lenart and Lascoux between these polynomials. 
She posed the problem of characterizing the set of permutations whose BPDs have only reduced co-BPDs. 
We give a pattern-avoidance characterization for these permutations using a set of seven patterns.
\end{abstract}

\maketitle

%-------------------------------------------------------------------------------

\section{Introduction}
Schubert polynomials $\mathfrak{S}_w$~\cite{lascoux1982poly} and Grothendieck polynomials $\mathfrak{G}_w$~\cite{lascoux82groth}, both indexed by permutations $w \in S_n$, are distinguished representatives for the cohomology and $K$-theory classes of the complete flag variety, respectively. There are several known combinatorial formulas for $\mathfrak{S}_w$. One of the earliest examples is the {\em pipe dream} model of Billey--Bergeron~\cite{bergeron1993rc} and Billey--Jockusch--Stanley~\cite{billey1993comb}, which was extended to $\mathfrak{G}_w$ by Fomin--Kirillov~\cite{fomin1994groth}. A more recent example is the {\em bumpless pipe dream} (BPD) model of Lam--Lee--Shimozono~\cite{lam2021back} that was extended to $\mathfrak{G}_w$ by Weigandt~\cite{weigandt2021bumpless}. \\

Both families of polynomials are $\Z$-linear bases for $\Z[x_1, x_2,\ldots]$. Their change of bases formulas have been studied combinatorially by Lenart~\cite{lenart1999} and Lascoux~\cite{lascoux2004} using the pipe dream model. In recent work, Weigandt~\cite{weigandt2025changingbasespipedream} gives the analogous results for the bumpless pipe dream model by introducing a new object called a {\em co-bumpless pipe dream} (co-BPD).
In particular, the expansion of a Grothendieck polynomial into Schubert polynomials is given by
\[
    \mathfrak{G}_w = \sum_v (-1)^{\ell(v) - \ell(w)} a_{w,v} \cdot \mathfrak{S}_v \; ,
\]
where $a_{w,v}$ is the number of bumpless pipe dreams for $w$ whose co-bumpless pipe dream is {\em reduced} and traces out $v$. See Section~\ref{s:prelim} for precise definitions. \\

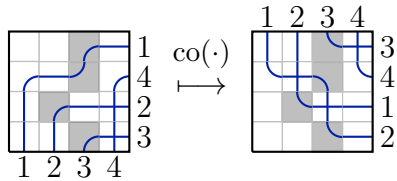
\begin{figure}[h!]
\centering
    \begin{tikzpicture}[scale =.4]
    \node at (2,3) {\shade};
    \node at (2,2) {\shade};
    \node at (1,1) {\shade};
    \node at (2,0) {\shade};
    
    \node at (0,3) {\nowire};\node at (1,3) {\nowire};\node at (2,3) {\oldare};\node at (3,3) {\oldhwire};\node at (0,2) {\oldare};\node at (1,2) {\oldhwire};\node at (2,2) {\oldjay};\node at (3,2) {\oldare};\node at (0,1) {\oldvwire};\node at (1,1) {\oldare};\node at (2,1) {\oldhwire};\node at (3,1) {\cross};\node at (0,0) {\oldvwire};\node at (1,0) {\oldvwire};\node at (2,0) {\oldare};\node at (3,0) {\cross};

    \draw[thick] (-0.5,-0.5) -- (-0.5,3.5) -- (3.5,3.5) -- (3.5,-0.5) -- (-0.5,-0.5);

    \node at (0,-1) {1};
    \node at (1,-1) {2};
    \node at (2,-1) {3};
    \node at (3,-1) {4};

    \node at (0,4) {\textcolor{white}{1}};
    \node at (1,4) {\textcolor{white}{2}};
    \node at (2,4) {\textcolor{white}{3}};
    \node at (3,4) {\textcolor{white}{4}};

    \node at (4,3) {1};
    \node at (4,2) {4};
    \node at (4,1) {2};
    \node at (4,0) {3};
    
    \end{tikzpicture}
    \raisebox{3em}{$\overset{\raisebox{0.5em}{\small \text{co}($\cdot$)}}{\longmapsto}$}
    \begin{tikzpicture}[scale =.4]
    \node at (2,3) {\shade};
    \node at (2,2) {\shade};
    \node at (1,1) {\shade};
    \node at (2,0) {\shade};
    
    \node at (0,3) {\oldvwire};\node at (1,3) {\oldvwire};\node at (2,3) {\oldel};\node at (3,3) {\cross};\node at (0,2) {\oldel};\node at (1,2) {\cross};\node at (2,2) {\olden};\node at (3,2) {\oldel};\node at (0,1) {\nowire};\node at (1,1) {\oldel};\node at (2,1) {\cross};\node at (3,1) {\oldhwire};\node at (0,0) {\nowire};\node at (1,0) {\nowire};\node at (2,0) {\oldel};\node at (3,0) {\oldhwire};

    \draw[thick] (-0.5,-0.5) -- (-0.5,3.5) -- (3.5,3.5) -- (3.5,-0.5) -- (-0.5,-0.5);

    \node at (0,4) {1};
    \node at (1,4) {2};
    \node at (2,4) {3};
    \node at (3,4) {4};
    
    \node at (0,-1) {\textcolor{white}{1}};
    \node at (1,-1) {\textcolor{white}{2}};
    \node at (2,-1) {\textcolor{white}{3}};
    \node at (3,-1) {\textcolor{white}{4}};

    \node at (4,3) {3};
    \node at (4,2) {4};
    \node at (4,1) {1};
    \node at (4,0) {2};
    
    \end{tikzpicture}
    \label{fig:intro co map}
    \caption{A (reduced) BPD for 1423 with its corresponding (non-reduced) co-BPD which traces out 3412.}
\end{figure}

\begin{comment}
\begin{theorem}[\cite{weigandt2025changingbasespipedream}]\label{thm:g to s}
    Let $a_{w,v} = \#\{B \in \mathrm{BPD}(w) : \mathrm{co}(B) \text{ reduced \& traces out $v$}\}$. Then
    \[
    \mathfrak{G}_w = \sum_v (-1)^{\ell(v) - \ell(w)} a_{w,v} \cdot \mathfrak{S}_v \; .
    \]
\end{theorem}

\begin{theorem}[\cite{weigandt2025changingbasespipedream}]\label{thm:s to g}
    Let $b_{w,v} = \#\{B \in \mathrm{bpd}(w) : \mathrm{co}(B) \text{ traces out $v$}\}$. Then
    \[
    \mathfrak{S}_w = \sum_v b_{w,v} \cdot \mathfrak{G}_v \; .
    \]
\end{theorem}

\Josh{State these theorems out of blocks}
\end{comment}

\newpage

In this paper we aim to understand which permutations have the property that all of their co-bumpless pipe dreams are reduced.
Our main result is the following.

\begin{theorem}
\label{t:pattern-char}
A permutation $w$ has all reduced co-BPDs if and only if $w$ avoids the seven patterns in $\Pi = \{ 1423, 12543, 13254, 25143, 215643, 216543, 241653 \}$.
\end{theorem}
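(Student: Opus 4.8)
The plan is to prove both implications by contraposition, so that Theorem~\ref{t:pattern-char} becomes the equivalent assertion that $w$ admits a non-reduced co-BPD if and only if $w$ contains one of the seven patterns in $\Pi$. Throughout I would use the standard fact (from the reducedness definition in Section~\ref{s:prelim}) that a co-BPD tracing out $v$ is non-reduced precisely when some pair of its pipes crosses at least twice, since any excess over $\ell(v)$ crossings must appear as such a double crossing.

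For the direction that pattern containment forces a non-reduced co-BPD, I would first treat the seven patterns as base cases: for each $\pi \in \Pi$ I would exhibit a single explicit $B_\pi \in \mathrm{BPD}(\pi)$ together with the two pipes of $\mathrm{co}(B_\pi)$ that cross twice, certifying directly that $\mathrm{co}(B_\pi)$ is non-reduced. The substantive work is then a pattern-embedding lemma: if $w$ contains $\pi$ at rows and columns $R, C$, one can build a $B \in \mathrm{BPD}(w)$ that agrees with a translate of $B_\pi$ on the cells indexed by $R \times C$, filling the inserted rows and columns with blank tiles and straight wires so that no new meeting of the two distinguished pipes is created. Because reducedness is detected only through a doubly-crossing pair, and the inserted material merely reroutes the distinguished pipes without letting them meet again, the double crossing of $\mathrm{co}(B_\pi)$ survives in $\mathrm{co}(B)$; hence $\mathrm{co}(B)$ is non-reduced.

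For the converse, I would start from a $B \in \mathrm{BPD}(w)$ with $\mathrm{co}(B)$ non-reduced and fix a pair of pipes $p < q$ that cross twice in $\mathrm{co}(B)$, choosing the two crossings as close together as possible. Translating through the co map, the two crossings and the forced routing of $p$ and $q$ between them pin down a local tile configuration in $B$. The heart of the argument is to read the relevant entries of the one-line notation off of this configuration and to show that, after accounting for the minimal auxiliary wires needed to actually close $p$ and $q$ into a valid BPD, the rows and columns involved always realize one of the seven patterns in $\Pi$. I expect this to require a finite but careful case analysis organized by the relative positions of the two crossings and by which auxiliary pipes must thread between them.

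The main obstacle is exactly this last case analysis: showing that every minimal doubly-crossing configuration collapses to \emph{precisely} one of the seven listed patterns, neither missing a case (which would demand an eighth pattern) nor producing a configuration covered only by a pattern absent from $\Pi$. Verifying minimality of the list, namely that none of the seven contains another as a pattern and that each genuinely forces a non-reduced co-BPD, is the natural consistency check that ties the two directions together and confirms that seven patterns are both necessary and sufficient.
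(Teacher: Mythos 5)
Your proposal contains a genuine gap in the direction ``pattern containment implies a non-reduced co-BPD,'' and it is exactly the step you treat as routine. The pattern-embedding lemma you posit --- that an occurrence of $\pi$ at rows and columns $R,C$ lets you build $B \in \mathrm{BPD}(w)$ agreeing with a translate of $B_\pi$ on $R\times C$, ``filling the inserted rows and columns with blank tiles and straight wires'' --- is not available. In a BPD for $w$ the routing of \emph{every} pipe is dictated by $w$: each pipe enters at the bottom and exits at the right, so no pipe can be a straight wire, and the pipes of $w$ not belonging to the pattern occurrence may be forced to enter and exit \emph{inside} the bounding box of the occurrence. Such pipes obstruct the droop moves needed to reproduce $B_\pi$; the paper calls them blocking pipes, and they are the central difficulty of this direction. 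The paper's Lemma~\ref{lem:blocking} establishes your embedding claim only in the special case that no blocking pipes exist; the bulk of Section~\ref{s:ness} (Propositions~\ref{p:1423} through~\ref{p:215643}) is a pattern-by-pattern analysis of what to do when they do exist --- sometimes re-choosing the occurrence so a blocking pipe becomes a pattern pipe, sometimes observing that a blocking pipe creates an occurrence of a \emph{different} pattern in $\Pi$ and invoking that case, and sometimes performing a modified cascade of droops through the blocking pipes that still produces the non-reduced configuration. For instance, $w = 13524$ contains $1423$ but the pipe playing the role of the ``extra'' value blocks the single droop $(1,1)\ssearrow(2,3)$ used for the pattern itself, so a different droop sequence is required. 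Without an argument handling blocking pipes, your first direction does not go through.

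Your second direction is in the same spirit as the paper's: the paper first characterizes non-reducedness of $\mathrm{co}(B)$ by a local configuration in $B$ (Lemma~\ref{l:config}, which is essentially your ``translate the two crossings through the co map,'' with the added subtlety that one needs a chain of alternating elbows with no elbows in between, since otherwise the two crossings need not involve the same pair of pipes), and then runs a case analysis organized by whether the two witness pipes cross before the configuration and whether the lower one droops afterward (Propositions~\ref{p:nodroop}, \ref{p:cross}, \ref{p:droop}), supported by the structural lemmas of Section~\ref{s:config}. You correctly anticipate that this is a substantial case analysis, but you do not carry it out, so as written the proposal establishes neither implication; the honest assessment is that it identifies the right skeleton while missing the blocking-pipe phenomenon that makes the forward direction nontrivial and deferring the entire content of the converse.
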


\begin{proof}
    This follows by combining Theorem~\ref{thm:avoidImpliesReduced} and Theorem~\ref{thm:reducedImpliesAvoid}.
\end{proof}

\noindent{\bf Outline:} In Section~\ref{s:prelim} we discuss background information.
The proof of Theorem~\ref{t:pattern-char} is comprised of Sections~\ref{s:config},~\ref{s:suff}, and ~\ref{s:ness}, where Section~\ref{s:suff} proves that having a non-reduced co-BPD implies containing and a pattern in $\Pi$, Section~\ref{s:ness} proves the converse, and Section~\ref{s:config} proves lemmas needed for both directions.
Lastly, Section~\ref{s:app} discusses applications for Theorem~\ref{t:pattern-char}.

\section{Preliminaries}\label{s:prelim}

A \emph{permutation} is an ordering of the set $[n]=\{1,2,\ldots, n\}$.
Let $S_n$ be the set of all permutations of $[n]$.
In this paper we write all our permutations in one-line notation.
An {\em inversion} of $w = w_1 \ldots w_n \in S_n$ is a pair $(i,j)$ with $i < j$ and $w_i > w_j$.
The number of inversions of $w$ is the {\em length} of $w$, denoted $\ell(w)$.
For example, $\ell(1432) = 3$.
A permutation $w\in S_n$ \emph{contains} $v\in S_k$ as a pattern if there is a $k$-element set of indices $i_1 < i_2 < \cdots < i_k$ such that $w_{i_r} < w_{i_s}$ if and only if $v_{r} < v_{s}$.
If $w$ does not contain $v$, then $w$ \emph{avoids} $v$.
For example $526134$ contains $1423$ as a pattern as $2634$ is order isomorphic to $1423$.
Further, $526134$ avoids $1234$ as a pattern as there is no increasing subsequence of length $4$.\\

A \emph{bumpless pipe dream} (BPD) of size $n$ is a tiling of an $n \times n$ grid with tiles from the set
\[ 
 \left\{ \; \raisebox{-0.2em}{\nowire} \; , \quad \raisebox{-0.2em}{\oldvwire} \; , \quad \raisebox{-0.2em}{\oldhwire} \; , \quad \raisebox{-0.2em}{\oldare} \; , \quad \raisebox{-0.2em}{\oldjay} \; , \quad \raisebox{-0.2em}{\cross} \;
 \right\},
\]
such that $n$ pipes enter from the bottom and exit to the right~\cite{lam2021back}. Labeling the pipes in increasing order from left-to-right as they enter, one obtains an {\em associated permutation} by reading the pipe labels from top-to-bottom as they exit, treating multiple crossings between a pair of pipes as turns if such an occurrence exists. A BPD is {\em reduced} if no two pipes cross more than once and {\em non-reduced} otherwise.

\[
\begin{tikzpicture}[scale=0.4]
\node at (0,3) {\nowire};\node at (1,3) {\oldare};\node at (2,3) {\oldhwire};\node at (3,3) {\oldhwire};\node at (0,2) {\nowire};\node at (1,2) {\oldvwire};\node at (2,2) {\oldare};\node at (3,2) {\oldhwire};\node at (0,1) {\oldare};\node at (1,1) {\cross};\node at (2,1) {\oldjay};\node at (3,1) {\oldare};\node at (0,0) {\oldvwire};\node at (1,0) {\oldvwire};\node at (2,0) {\oldare};\node at (3,0) {\cross};

\draw[thick] (-0.5,-0.5) -- (-0.5,3.5) -- (3.5,3.5) -- (3.5,-0.5) -- (-0.5,-0.5);

\node at (0,-1) {1};
\node at (1,-1) {2};
\node at (2,-1) {3};
\node at (3,-1) {4};

\node at (4,3) {2};
\node at (4,2) {1};
\node at (4,1) {4};
\node at (4,0) {3};

\node at (1.6,-2) {reduced};
\end{tikzpicture} \; \hspace{0.5in} \;
\begin{tikzpicture}[scale=0.4]

\node at (0,3) {\nowire};\node at (1,3) {\nowire};\node at (2,3) {\oldare};\node at (3,3) {\oldhwire};\node at (0,2) {\nowire};\node at (1,2) {\oldare};\node at (2,2) {\cross};\node at (3,2) {\oldhwire};\node at (0,1) {\oldare};\node at (1,1) {\cross};\node at (2,1) {\oldjay};\node at (3,1) {\oldare};\node at (0,0) {\oldvwire};\node at (1,0) {\oldvwire};\node at (2,0) {\oldare};\node at (3,0) {\cross};

\draw[red,thick] (2,2) circle (0.4);
\draw[red,thick] (1,1) circle (0.4);

\draw[thick] (-0.5,-0.5) -- (-0.5,3.5) -- (3.5,3.5) -- (3.5,-0.5) -- (-0.5,-0.5);

\node at (0,-1) {1};
\node at (1,-1) {2};
\node at (2,-1) {3};
\node at (3,-1) {4};

\node at (4,3) {2};
\node at (4,2) {1};
\node at (4,1) {4};
\node at (4,0) {3};

\node at (1.8,-2) {non-reduced};
\end{tikzpicture}
\]
Since double crossings are treated as turns, occasionally the second crossing tile of a pair of pipes will be instead rendered as a $\inlinebump$-tile.\\

Write $\text{BPD}(w)$ for the bumpless pipe dreams with associated permutation $w$.
Similarly, let $\textrm{bpd}(w)$ be the set of reduced bumpless pipe dreams with associated permutation $w$.
We refer to the $\inlineare$ and $\inlinejay$ tiles as {\em elbows}, specifically ``are'' and ``jay'' elbows, respectively.
We refer to $\inlinehwire$ tiles as ``flats''.
Every BPD is completely determined by the location of its elbows~\cite{weigandt2021bumpless}.

\begin{definition}
    A rectangular region of $B\in \BPD{w}$ is called an \emph{active} region if the top left corner is an $\inlineare$-tile, the bottom right corner is a $\inlinenowire$-tile and there are no elbows in the region besides potentially the corners.
\end{definition}

\begin{definition}
    A droop is a move on an active region of a BPD of one of the following forms:
    \[
\begin{tikzpicture}[scale= .4]
\node at (0,0) {\oldvwire};
\node at (1,0) {\nowire};
\node at (2,0) {\nowire};

\node at (0,1) {\oldvwire};
\node at (1,1) {\nowire};
\node at (2,1) {\nowire};

\node at (0,2) {\oldare};
\node at (1,2) {\oldhwire};
\node at (2,2) {\oldhwire};

\draw[thick, dashed] (-0.5,-0.5) -- (-0.5,2.5) -- (2.5,2.5) -- (2.5,-0.5) -- (-0.5,-0.5);

\node at (5,0) {\oldare};
\node at (6,0) {\oldhwire};
\node at (7,0) {\oldjay};

\node at (5,1) {\nowire};
\node at (6,1) {\nowire};
\node at (7,1) {\oldvwire};

\node at (5,2) {\nowire};
\node at (6,2) {\nowire};
\node at (7,2) {\oldare};

\node at (3.5, 1) {$\mapsto$};

\draw[thick, dashed] (4.5,-0.5) -- (4.5,2.5) -- (7.5,2.5) -- (7.5,-0.5) -- (4.5,-0.5);

\end{tikzpicture},
\begin{tikzpicture}[scale= .4]
\node at (0,0) {\oldjay};
\node at (1,0) {\nowire};
\node at (2,0) {\nowire};

\node at (0,1) {\oldvwire};
\node at (1,1) {\nowire};
\node at (2,1) {\nowire};

\node at (0,2) {\oldare};
\node at (1,2) {\oldhwire};
\node at (2,2) {\oldhwire};

\draw[thick, dashed] (-0.5,-0.5) -- (-0.5,2.5) -- (2.5,2.5) -- (2.5,-0.5) -- (-0.5,-0.5);

\node at (5,0) {\oldhwire};
\node at (6,0) {\oldhwire};
\node at (7,0) {\oldjay};

\node at (5,1) {\nowire};
\node at (6,1) {\nowire};
\node at (7,1) {\oldvwire};

\node at (5,2) {\nowire};
\node at (6,2) {\nowire};
\node at (7,2) {\oldare};

\node at (3.5, 1) {$\mapsto$};

\draw[thick, dashed] (4.5,-0.5) -- (4.5,2.5) -- (7.5,2.5) -- (7.5,-0.5) -- (4.5,-0.5);
\end{tikzpicture},
\begin{tikzpicture}[scale= .4]
\node at (0,0) {\oldvwire};
\node at (1,0) {\nowire};
\node at (2,0) {\nowire};

\node at (0,1) {\oldvwire};
\node at (1,1) {\nowire};
\node at (2,1) {\nowire};

\node at (0,2) {\oldare};
\node at (1,2) {\oldhwire};
\node at (2,2) {\oldjay};

\draw[thick, dashed] (-0.5,-0.5) -- (-0.5,2.5) -- (2.5,2.5) -- (2.5,-0.5) -- (-0.5,-0.5);

\node at (5,0) {\oldare};
\node at (6,0) {\oldhwire};
\node at (7,0) {\oldjay};

\node at (5,1) {\nowire};
\node at (6,1) {\nowire};
\node at (7,1) {\oldvwire};

\node at (5,2) {\nowire};
\node at (6,2) {\nowire};
\node at (7,2) {\oldvwire};

\node at (3.5, 1) {$\mapsto$};

\draw[thick, dashed] (4.5,-0.5) -- (4.5,2.5) -- (7.5,2.5) -- (7.5,-0.5) -- (4.5,-0.5);

\end{tikzpicture},
\begin{tikzpicture}[scale= .4]
\node at (0,0) {\oldjay};
\node at (1,0) {\nowire};
\node at (2,0) {\nowire};

\node at (0,1) {\oldvwire};
\node at (1,1) {\nowire};
\node at (2,1) {\nowire};

\node at (0,2) {\oldare};
\node at (1,2) {\oldhwire};
\node at (2,2) {\oldjay};

\draw[thick, dashed] (-0.5,-0.5) -- (-0.5,2.5) -- (2.5,2.5) -- (2.5,-0.5) -- (-0.5,-0.5);

\node at (5,0) {\oldhwire};
\node at (6,0) {\oldhwire};
\node at (7,0) {\oldjay};

\node at (5,1) {\nowire};
\node at (6,1) {\nowire};
\node at (7,1) {\oldvwire};

\node at (5,2) {\nowire};
\node at (6,2) {\nowire};
\node at (7,2) {\oldvwire};

\node at (3.5, 1) {$\mapsto$};

\draw[thick, dashed] (4.5,-0.5) -- (4.5,2.5) -- (7.5,2.5) -- (7.5,-0.5) -- (4.5,-0.5);

\end{tikzpicture}
\]
where all other pipes are unpictured pipes, but no elbow tiles within the pictured rectangle as it is an active region.
\end{definition}

\begin{definition}
    A rectangular region of $B\in \BPD{w}$ is called a \emph{blocked} region if it would be an active region except it contains an elbow within the region (besides as a corner).
    The pipe(s) of such elbows are called \emph{blocking pipes}.
    Note that the $\inlineare$-tile in the top left corner \textit{cannot} droop into the bottom right corner's $\inlinenowire$-tile.
\end{definition}

\begin{definition}
    A $K$-theoretic droop is a move on a BPD of one of the following forms:
    \[
\begin{tikzpicture}[scale= .4]
\node at (7,0) {\oldare};
\node at (8,0) {\oldhwire};
\node at (9,0) {\cross};
\node at (10,0) {\oldhwire};
\node at (11,0) {\oldjay};

\node at (7,1) {\nowire};
\node at (8,1) {\nowire};
\node at (9,1) {\oldvwire};
\node at (10,1) {\nowire};
\node at (11,1) {\oldvwire};

\node at (7,2) {\nowire};
\node at (8,2) {\nowire};
\node at (9,2) {\oldvwire};
\node at (10,2) {\nowire};
\node at (11,2) {\oldvwire};

\node at (7,3) {\nowire};
\node at (8,3) {\nowire};
\node at (9,3) {\oldare};
\node at (10,3) {\oldhwire};
\node at (11,3) {\cross};

\draw[thick, dashed] (-0.5,-0.5) -- (-0.5,3.5) -- (4.5,3.5) -- (4.5,-0.5) -- (-0.5,-0.5);

\node at (0,0) {\oldvwire};
\node at (1,0) {\nowire};
\node at (2,0) {\oldare};
\node at (3,0) {\oldhwire};
\node at (4,0) {\oldjay};

\node at (0,1) {\oldvwire};
\node at (1,1) {\nowire};
\node at (2,1) {\nowire};
\node at (3,1) {\nowire};
\node at (4,1) {\oldvwire};

\node at (0,2) {\oldvwire};
\node at (1,2) {\nowire};
\node at (2,2) {\nowire};
\node at (3,2) {\nowire};
\node at (4,2) {\oldvwire};

\node at (0,3) {\oldare};
\node at (1,3) {\oldhwire};
\node at (2,3) {\oldhwire};
\node at (3,3) {\oldhwire};
\node at (4,3) {\cross};

\node at (5.5, 1.5) {$\mapsto$};

\draw[thick, dashed] (6.5,-0.5) -- (6.5,3.5) -- (11.5,3.5) -- (11.5,-0.5) -- (6.5,-0.5);

\end{tikzpicture}\hspace{1em},\hspace{1em}
\begin{tikzpicture}[scale= .4]
\node at (7,0) {\cross};
\node at (8,0) {\oldhwire};
\node at (9,0) {\oldhwire};
\node at (10,0) {\oldhwire};
\node at (11,0) {\oldjay};

\node at (7,1) {\oldare};
\node at (8,1) {\oldhwire};
\node at (9,1) {\oldhwire};
\node at (10,1) {\oldhwire};
\node at (11,1) {\cross};

\node at (7,2) {\nowire};
\node at (8,2) {\nowire};
\node at (9,2) {\nowire};
\node at (10,2) {\nowire};
\node at (11,2) {\oldvwire};

\node at (7,3) {\nowire};
\node at (8,3) {\nowire};
\node at (9,3) {\nowire};
\node at (10,3) {\nowire};
\node at (11,3) {\oldare};

\draw[thick, dashed] (-0.5,-0.5) -- (-0.5,3.5) -- (4.5,3.5) -- (4.5,-0.5) -- (-0.5,-0.5);

\node at (0,0) {\cross};
\node at (1,0) {\oldhwire};
\node at (2,0) {\oldhwire};
\node at (3,0) {\oldhwire};
\node at (4,0) {\oldjay};

\node at (0,1) {\oldvwire};
\node at (1,1) {\nowire};
\node at (2,1) {\nowire};
\node at (3,1) {\nowire};
\node at (4,1) {\oldare};

\node at (0,2) {\oldvwire};
\node at (1,2) {\nowire};
\node at (2,2) {\nowire};
\node at (3,2) {\nowire};
\node at (4,2) {\nowire};

\node at (0,3) {\oldare};
\node at (1,3) {\oldhwire};
\node at (2,3) {\oldhwire};
\node at (3,3) {\oldhwire};
\node at (4,3) {\oldhwire};

\node at (5.5, 1.5) {$\mapsto$};

\draw[thick, dashed] (6.5,-0.5) -- (6.5,3.5) -- (11.5,3.5) -- (11.5,-0.5) -- (6.5,-0.5);

    \end{tikzpicture}
\]
where there may be other unpictured pipes, but no elbow tiles within the pictured rectangle.
\end{definition}
Lam, Lee and Shimozono showed that $\bpd{w}$ is connected by droop moves \cite{lam2021back}.
Weigandt showed that $\BPD{w}$ is connected by the combination of droop moves and $K$-theoretic droop moves \cite{weigandt2021bumpless}.

\begin{definition}[\cite{lam2021back}]
    The Schubert polynomial of a permutation $w$ is
    \begin{equation}
        \mathfrak{S}_w = \sum_{B\in\textrm{bpd}(w)} \prod_{i\in \raisebox{-0.2em}{\inlinenowire}(B)} x_i
    \end{equation}
    where $\inlinenowire(B)$ is a multiset defined by containing an $i$ for each blank tile $B$ has in row $i$.
\end{definition}
Grothendieck polynomials can be computed using the set of all BPDs.
\begin{definition}[\cite{weigandt2021bumpless}]
    The Grothendieck polynomial of a permutation $w$ is
    \begin{equation}
        \mathfrak{G}_w = \sum_{B\in\textrm{BPD}(w)} (-1)^{|\raisebox{-0.2em}{\inlinenowire}(B)|-\ell(w)} \prod_{i\in\raisebox{-0.2em}{\inlinenowire}(B)} x_i \prod_{j\in\raisebox{-0.2em}{\inlinejay}(B)} (1-x_j)
    \end{equation}
    where $\inlinejay(B)$ is a multiset defined by containing a $j$ for each jay elbow $B$ has in row $j$.
\end{definition}

In recent work, Weigandt defined a co-BPD object corresponding to each BPD. 

\begin{definition}[\cite{weigandt2025changingbasespipedream}]
\label{d:coBPD}
    For a given bumpless pipe dream $B$, its corresponding \emph{co-bumpless pipe dream} $\textrm{co}(B)$ is defined by exchanging tiles as follows:
    \[
    \oldare \; \raisebox{0.3em}{$\longleftrightarrow$} \; \oldel \; , \quad \oldjay \; \raisebox{0.3em}{$\longleftrightarrow$} \; \olden \; , \quad \oldvwire \; \raisebox{0.3em}{$\longleftrightarrow$} \; \nowire \; , \quad \; \oldhwire \; \raisebox{0.3em}{$\longleftrightarrow$} \; \cross \; .
    \]

\end{definition}
In other words, the locations of the elbows are the same, but now pipes enter from the top instead of the bottom while still exiting to the right.

\newpage 

Labeling pipes in increasing order from left-to-right as they enter, $\text{co}(B)$ {\em traces out} a permutation by reading the pipe labels from top-to-bottom as they exit, treating any multiple crossings as turns.
The permutation associated to $B$ may be different from the one $\text{co}(B)$ traces out.
\[
    \begin{tikzpicture}[scale =.4]
    \node at (2,3) {\shade};
    \node at (2,2) {\shade};
    \node at (1,1) {\shade};
    \node at (2,0) {\shade};
    
    \node at (0,3) {\nowire};\node at (1,3) {\nowire};\node at (2,3) {\oldare};\node at (3,3) {\oldhwire};\node at (0,2) {\oldare};\node at (1,2) {\oldhwire};\node at (2,2) {\oldjay};\node at (3,2) {\oldare};\node at (0,1) {\oldvwire};\node at (1,1) {\oldare};\node at (2,1) {\oldhwire};\node at (3,1) {\cross};\node at (0,0) {\oldvwire};\node at (1,0) {\oldvwire};\node at (2,0) {\oldare};\node at (3,0) {\cross};

    \draw[thick] (-0.5,-0.5) -- (-0.5,3.5) -- (3.5,3.5) -- (3.5,-0.5) -- (-0.5,-0.5);

    \node at (0,-1) {1};
    \node at (1,-1) {2};
    \node at (2,-1) {3};
    \node at (3,-1) {4};

    \node at (0,4) {\textcolor{white}{1}};
    \node at (1,4) {\textcolor{white}{2}};
    \node at (2,4) {\textcolor{white}{3}};
    \node at (3,4) {\textcolor{white}{4}};

    \node at (4,3) {1};
    \node at (4,2) {4};
    \node at (4,1) {2};
    \node at (4,0) {3};
    
    \end{tikzpicture}
    \raisebox{3em}{$\overset{\raisebox{0.5em}{\small \text{co}($\cdot$)}}{\longmapsto}$}
    \begin{tikzpicture}[scale =.4]
    \node at (2,3) {\shade};
    \node at (2,2) {\shade};
    \node at (1,1) {\shade};
    \node at (2,0) {\shade};
    
    \node at (0,3) {\oldvwire};\node at (1,3) {\oldvwire};\node at (2,3) {\oldel};\node at (3,3) {\cross};\node at (0,2) {\oldel};\node at (1,2) {\cross};\node at (2,2) {\olden};\node at (3,2) {\oldel};\node at (0,1) {\nowire};\node at (1,1) {\oldel};\node at (2,1) {\cross};\node at (3,1) {\oldhwire};\node at (0,0) {\nowire};\node at (1,0) {\nowire};\node at (2,0) {\oldel};\node at (3,0) {\oldhwire};

    \draw[thick] (-0.5,-0.5) -- (-0.5,3.5) -- (3.5,3.5) -- (3.5,-0.5) -- (-0.5,-0.5);

    \node at (0,4) {1};
    \node at (1,4) {2};
    \node at (2,4) {3};
    \node at (3,4) {4};
    
    \node at (0,-1) {\textcolor{white}{1}};
    \node at (1,-1) {\textcolor{white}{2}};
    \node at (2,-1) {\textcolor{white}{3}};
    \node at (3,-1) {\textcolor{white}{4}};

    \node at (4,3) {3};
    \node at (4,2) {4};
    \node at (4,1) {1};
    \node at (4,0) {2};
    
    \end{tikzpicture}
    \label{fig:intro co map}
\]

A co-BPD is {\em reduced} if no two pipes cross more than once and {\em non-reduced} otherwise. 
Notice that $\text{co}(\cdot)$ does not necessarily preserve reducedness as seen in the example above.
We refer to the set $\{\text{co}(B) : B \in \text{BPD}(w)\}$ as the co-BPDs for $w$.
These objects were introduced in order to give a combinatorial description of changing bases between Grothendieck and Schubert polynomials.
The formula for expanding Grothendiecks into Schuberts is the following:

\begin{theorem}[\cite{weigandt2025changingbasespipedream}]\label{thm:g to s}
    Let $a_{w,v} = \#\{B \in \mathrm{BPD}(w) : \mathrm{co}(B) \text{ reduced \& traces out } v\}$. Then
    \[
    \mathfrak{G}_w = \sum_v (-1)^{\ell(v) - \ell(w)} a_{w,v} \cdot \mathfrak{S}_v \; .
    \]
\end{theorem}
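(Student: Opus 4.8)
The plan is to rewrite the Grothendieck generating function from the definition of $\mathfrak{G}_w$ entirely in terms of co-BPDs, and then to collect terms according to the permutation each co-BPD traces out. Since $\co{\cdot}$ is a bijection from $\BPD{w}$ onto the co-BPDs for $w$, I would first transport the weight $(-1)^{|\inlinenowire(B)|-\ell(w)}\prod_i x_i\prod_j(1-x_j)$ along this bijection. Under $\co{\cdot}$ the blank tiles of $B$ (which index the $x_i$ factors) become vertical-wire tiles of $\co{B}$ and the jay elbows of $B$ (which index the $(1-x_j)$ factors) become en-elbows of $\co{B}$, so this step is pure bookkeeping: $\mathfrak{G}_w$ becomes a signed, weighted sum over co-BPDs.

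Next I would expand each factor $(1-x_j)$, turning $\mathfrak{G}_w$ into a signed sum over decorated co-BPDs, i.e.\ pairs $(\co{B},S)$ where $S$ is the subset of en-elbows assigned the $-x_j$ term. Grouping these by the traced permutation $v$, the goal becomes the per-$v$ identity that the total decorated weight of all co-BPDs tracing $v$ equals $(-1)^{\ell(v)-\ell(w)}a_{w,v}\,\mathfrak{S}_v$. The two things to prove are then (i) that the non-reduced co-BPDs contribute nothing after cancellation, and (ii) that the reduced co-BPDs tracing $v$ --- of which there are exactly $a_{w,v}$ --- each account for one full copy of $(-1)^{\ell(v)-\ell(w)}\mathfrak{S}_v$.

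For (i) I would construct a sign-reversing involution on the decorated non-reduced co-BPDs. At any non-reduced co-BPD two pipes cross twice, and near such a repeated crossing one can perform a local toggle --- resolving or creating the double crossing via the co-image of a $K$-theoretic droop, paired with adding or removing the corresponding mark in $S$ --- that flips the sign $(-1)^{|S|}$ while fixing both the traced permutation $v$ and the surviving monomial. For (ii), a reduced co-BPD $C$ tracing $v$ should be viewed as the source of a whole family of (generally non-reduced) co-BPDs obtained by co-droops; letting these droops range while expanding $(1-x_j)$ makes the decorated weights telescope precisely onto the monomials $\prod x_i$ of the reduced BPDs in $\bpd{v}$, reproducing $\mathfrak{S}_v$ with the uniform sign $(-1)^{\ell(v)-\ell(w)}$ coming from the degree shift between the $\ell(w)$ blank tiles and $\ell(v)$ crossings.

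The main obstacle is making (i) and (ii) simultaneously precise and compatible: the involution of (i) must be defined so that it never maps into the reduced locus and has no fixed points off it, while the telescoping of (ii) must be shown to be an exact bijection onto $\bpd{v}$ rather than an overcount or undercount. Both hinge on a careful local analysis of how the droop and $K$-theoretic droop moves that connect $\BPD{w}$ (Lam--Lee--Shimozono \cite{lam2021back}; Weigandt \cite{weigandt2021bumpless}) behave under $\co{\cdot}$ --- exactly the kind of configuration bookkeeping that also underlies the analogous pipe-dream change-of-basis results of Lenart \cite{lenart1999} and Lascoux \cite{lascoux2004}.
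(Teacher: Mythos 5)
Two preliminary remarks. First, the paper you are being compared against does not prove this statement at all: Theorem~\ref{thm:g to s} is quoted from \cite{weigandt2025changingbasespipedream}, so your outline has to stand on its own. Second, it does not stand: its central decomposition is false, and the paper's own running example $w=1423$ refutes it. Since $1423$ is vexillary, $\BPD{1423}=\bpd{1423}$ consists of exactly three BPDs: the Rothe BPD $B_1$, the BPD $B_2$ obtained by the droop $(1,1)\ssearrow(2,2)$, and the BPD $B_3$ obtained by $(1,1)\ssearrow(2,3)$. Their Grothendieck weights are $x_2^2$, $\ x_1x_2(1-x_2)$, and $x_1^2(1-x_2)$; moreover $\co{B_1}$ is reduced and traces $1423$, $\co{B_2}$ is reduced and traces $2413$, and $\co{B_3}$ is non-reduced and traces $3412$ (this is precisely the pair displayed in the paper's first figure). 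Now test your claim (i): $B_3$ is the \emph{unique} BPD of $1423$ with non-reduced co-BPD, and after expanding its factor $(1-x_2)$ its two decorated objects carry the distinct monomials $x_1^2$ and $x_1^2x_2$ with opposite signs. A sign-reversing, monomial-preserving involution on the non-reduced locus has nothing to pair either one with, and indeed the total non-reduced contribution $x_1^2-x_1^2x_2$ is not zero. Far from cancelling, these monomials are indispensable: in the true expansion $\mathfrak{G}_{1423}=\mathfrak{S}_{1423}-\mathfrak{S}_{2413}$, the term $x_1^2$ of $\mathfrak{S}_{1423}=x_1^2+x_1x_2+x_2^2$ and the term $-x_1^2x_2$ of $-\mathfrak{S}_{2413}=-x_1^2x_2-x_1x_2^2$ both come from $B_3$.

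The same example kills your per-$v$ identity and claim (ii). The only co-BPD of $w=1423$ tracing $v=1423$ is $\co{B_1}$, whose total decorated weight is $x_2^2$, whereas your grouping demands $(-1)^{0}a_{1423,1423}\,\mathfrak{S}_{1423}=x_1^2+x_1x_2+x_2^2$; the missing monomials arrive from $B_2$, whose co-BPD traces a \emph{different} permutation, and from the non-reduced $B_3$. So the expanded monomials of a single BPD genuinely scatter across several $\mathfrak{S}_v$'s, and the monomials of a single $\mathfrak{S}_v$ are gathered from BPDs whose co-BPDs trace different permutations and are not even all reduced. This also shows your proposed repair for (ii) cannot work: the ``family'' generated from $\co{B_1}$ would have to absorb $\co{B_2}$, which is itself reduced and must simultaneously serve as the sole source of the coefficient of $\mathfrak{S}_{2413}$, so the families overlap and the count is inconsistent. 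The obstacle is not, as you suggest, making (i) and (ii) ``simultaneously precise and compatible''; each is false separately. Any correct proof must implement a cross-$v$ redistribution of monomials in which non-reduced co-BPDs contribute essentially, which is a fundamentally different mechanism from the one you describe.
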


Notice that Theorem~\ref{thm:g to s} specifically only sums over reduced co-BPDs.
This motivates the work of this paper as we determine for which $w$ are there any $B$ with $\co{B}$ tracing $v$ not being counted due to $\co{B}$ being non-reduced.

\section{Configuration lemmas}\label{s:config}

\begin{lemma}
\label{l:config}
For $B \in \BPD{w}$, one has that $\co{B}$ is non-reduced if and only if $B$ contains an instance of the following configuration:

\[
\begin{tikzpicture}[scale=0.4]
\node at (0,0) {\oldhwire};
\node at (-1,0) {$p$};
\node at (3,0) {\oldjay};
\node at (3.25,0) {$p$};
\node at (0,-3) {\oldare};
\node at (3.5,-3.5) {$\ddots$};

\node at (7,-7) {\oldhwire};
\node at (8,-7) {$q$};
\node at (4,-7) {\oldare};
\node at (3.75, -6.75) {$q$};
\node at (7,-4) {\oldjay};

\node at (0,-1.5) {$\vdots$};
\node at (1.5,0) {$\cdots$};
\node at (7,-5.5) {$\vdots$};
\node at (5.5,-7) {$\cdots$};

\end{tikzpicture}
\]
where the closest elbow below the top flat is an $\inlineare$-elbow, which in turn is either the closest elbow left of the bottom flat or the closest elbow to its right is an $\inlinejay$-elbow, whose closest elbow below it is an $\inlineare$-elbow who has the same properties as the previous $\inlineare$-elbow and so forth.
Likewise, the closest elbow right of the top flat is an $\inlinejay$-elbow, which in turn is either the closest elbow above the bottom flat or the closest elbow below it is an $\inlineare$-elbow whose closest elbow to its right is an $\inlinejay$-elbow who has the same properties as the previous $\inlineare$-elbow.
Note that this means that for the $\inlineare$-elbows above there either is a chain of alternating elbows connecting them, or they are the same elbows; likewise for the $\inlinejay$-elbows.
%where (1) there are no elbows on the vertical or horizontal dots; (2) either the two $\inlineare$-elbows are actually the same tile, or if they are not the same tile, then there is a chain of alternating elbows such that each adjacent pair in the chain are either in the same row or column; and (3) likewise as in (2) for the $\inlinejay$-elbows.
\end{lemma}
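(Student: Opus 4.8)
The plan is to translate non-reducedness of $\co{B}$ into a statement about the \emph{flats} of $B$ and then to trace co-pipes through them. The starting point is that, under the tile exchange of Definition~\ref{d:coBPD}, the cross tiles of $\co{B}$ are exactly the flats of $B$ (a horizontal wire and a cross being interchanged), while every other co-tile is a wire, a blank, or one of the two co-elbows. Because co-pipes enter from the top and exit on the right, each co-pipe is a lattice path that only moves south and east; in particular a single co-pipe can never occupy both strands of a cross tile, so each cross tile of $\co{B}$ is a genuine transverse crossing of two distinct co-pipes. Hence $\co{B}$ is non-reduced if and only if some pair of co-pipes meets at two distinct cross tiles, i.e.\ if and only if $B$ has two flats that are crossings of the same pair of co-pipes.

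Next I would record the local structure at a single flat. Following the vertical strand of the corresponding cross southward, it passes only through cross and vertical co-tiles (none of which are elbows) until the co-pipe turns east; under the exchange this turn occurs at the nearest are-elbow of $B$ below the flat, and symmetrically the strand was produced by a turn at the nearest jay-elbow above. The same analysis on the horizontal strand shows that the nearest elbow to the east is a jay and the nearest elbow to the west is an are. Thus at every flat the nearest elbows to the south and west are ares while the nearest elbows to the north and east are jays (when they exist), and these four elbows are precisely the turning points of the two crossing co-pipes. This supplies the local part of the configuration: at the top flat the co-pipe running east turns south at the jay to its right, and the co-pipe running south turns east at the are below it.

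For the backward implication I would assume $B$ contains the configuration and simply follow the two co-pipes it names. The co-pipe that is horizontal at the top flat turns south at the jay to the right, and the recursive hypothesis (the nearest elbow below a jay is an are, the nearest elbow right of an are is a jay, and so on) lists exactly the elbows at which this co-pipe alternately turns south and east, each intervening segment being straight by the ``nearest elbow'' conditions; the chain ends at the jay above the bottom flat, whence the co-pipe enters the bottom flat as its vertical strand. The are-chain does the same for the co-pipe that is vertical at the top flat, delivering it into the bottom flat as the horizontal strand. Hence these two co-pipes cross at both flats and $\co{B}$ is non-reduced; the degenerate cases are exactly when a chain has length zero, i.e.\ when the two ares, or the two jays, coincide.

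The forward implication is where the real work lies, and I expect it to be the main obstacle. Assuming $\co{B}$ non-reduced, I would choose a pair of co-pipes $p,p'$ crossing at two cross tiles that are \emph{consecutive} crossings of the pair, and let $f,f'$ be the corresponding flats. The key geometric claim is that two south--east monotone co-pipes crossing twice enclose a bigon: the two crossings must be strictly comparable (one weakly northwest of the other, taken as the top and bottom flats), and between them the two co-pipe segments are disjoint monotone staircases. Reading off the turns of these two segments and translating through the exchange yields the two alternating are/jay chains, with the nearest-elbow adjacencies forced by the straightness of the segments between turns. The delicate points to verify are that the crossings are genuinely transverse and strictly ordered (so the bigon is nondegenerate and its sides do not meet again), that cross tiles of \emph{other} co-pipe pairs lying inside the bigon---which are flats of $B$ but not elbows---never interfere with the nearest-elbow conditions, and that the collapsed bigons, where a chain reduces to a single shared elbow, are correctly accounted for by the final remark of the statement.
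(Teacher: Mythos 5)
Your overall route is sound, and under the convention that pipes are traced \emph{straight through} every cross tile it is arguably cleaner than the paper's own argument. Your local analysis of a flat is correct (the nearest elbows below and to the left of any flat of $B$ are $\inlineare$-elbows, the nearest ones above and to the right are $\inlinejay$-elbows, and these are the turning points of the two strands through the corresponding cross of $\co{B}$), your backward trace along the two chains is correct, and your bigon claim in the forward direction is true and provable: parametrizing the two segments between consecutive crossings by antidiagonals, the difference of their column coordinates changes by at most one per step, so interior-disjointness forces the strand that is horizontal at the northwest crossing to enter the southeast crossing vertically; this is exactly what makes the turn sequences alternate $\inlinejay,\inlineare,\dots,\inlinejay$ and $\inlineare,\inlinejay,\dots,\inlineare$, with the nearest-elbow adjacencies forced by straightness of the runs between turns.

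There is, however, a genuine gap, and it is precisely the point on which the paper's proof spends most of its effort. In this paper pipes are \emph{not} traced straight through every crossing: by definition, multiple crossings between a pair of pipes are treated as turns, so a cross tile at which two previously-crossed pipes meet again acts as a $\inlinebump$ and the pipes are re-routed there. Under that convention your two central assertions --- that every cross tile of $\co{B}$ is a genuine transverse crossing of two distinct co-pipes, and that your two traced co-pipes cross at both flats --- fail as stated; Figure~\ref{fig:double_config} of the paper exhibits exactly an instance of the configuration in which the pair of pipes crossing at the top flat is \emph{not} the pair crossing at the bottom flat, because one of them is re-routed at an intermediate double crossing. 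The paper's proof confronts this head-on: in the backward direction, if the tracing fails to deliver the same pair to the bottom flat, then some pipe was re-routed, i.e.\ it already double-crosses another pipe, so $\co{B}$ is non-reduced anyway; in the forward direction, if a connecting segment turns at a re-routed crossing rather than at an elbow, the pair $(a,b)$ is replaced by $(a,c)$, where $c$ is the re-routing partner, whose second crossing lies strictly further northwest, and this recursion terminates. Your geometric argument can be repaired by one bridging observation --- a pair of straight-through strands crosses twice if and only if the turn-convention tracing is forced to treat some crossing as a turn, so either condition may serve as the meaning of non-reduced --- but as written the proposal never engages with this convention at all, so it does not connect to the paper's definition of non-reducedness, and its backward direction asserts something that is literally false for the pipes as the paper traces them.
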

\begin{proof}
A pair of pipes crosses twice in a co-BPD if and only if it contains an instance of the following configuration:
\[
\begin{tikzpicture}[scale=0.4]
\node at (0,0) {\cross};
\node at (0,1) {$b$};
\node at (-1,0) {$a$};
\node at (3,0) {\olden};
\node at (3.25,0.25) {$a$};
\node at (0,-3) {\oldel};
\node at (-0.25,-3.25) {$b$};
\node at (4,-4) {$\ddots$};

\node at (8,-8) {\cross};
\node at (8,-9) {$b$};
\node at (9,-8) {$a$};
\node at (5,-8) {\oldel};
\node at (4.75, -8.25) {$b$};
\node at (8,-5) {\olden};
\node at (8.25,-4.75) {$a$};

\node at (0,-1.5) {$\vdots$};
\node at (1.5,0) {$\cdots$};
\node at (8,-6.5) {$\vdots$};
\node at (6.5,-8) {$\cdots$};

\end{tikzpicture}
\]
(Note that the $b$ and $a$ swap in the bottom-right cross because this tile is treated as a $\inlinebump$ since these pipes have already crossed before.)
In order for both $\olden$-elbows to be connected there must either be a chain of alternating elbows such that each adjacent pair is in the same row or column, or $a$ must double cross with another pipe.
However, in the latter case we may instead consider the pair of $a$ and the pipe it double crossed before double crossing with $b$.
As the new double crossing is above and to the left this will eventually terminate.
Likewise for the $\oldel$-elbows and $b$ pipe.
From the definition of $\co{B}$ it follows that $B$ has the configuration in the Lemma statement.
Further, if $B$ contains the configuration in the Lemma statement then by definition $\co{B}$ has the configuration above with the possible exception that the second cross might not be between the same pair of pipes.
However, in that case either $a$ or $b$ must double cross with another pipe.
This due to the fact that the chain of alternating elbows has no elbows in between meaning the only way for $a$ or $b$ to not participate in the bottom right corner crossing is to double cross with another pipe.
\end{proof}

We present some examples of a BPD and its co-BPD to illustrate why all of the conditions above are necessary and sufficient.

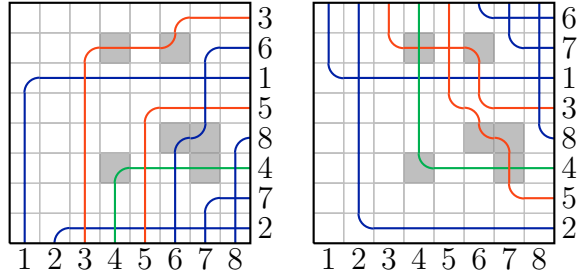
\begin{figure}[h!]
    \centering
    \begin{tikzpicture}[scale =.4]\node at (3,6) {\shade};\node at (5,6) {\shade};\node at (3,2) {\shade};\node at (6,2) {\shade};\node at (6,3) {\shade};\node at (5,3) {\shade};\grid{7}{7}\node at (0,7) {\nowire};\node at (1,7) {\nowire};\node at (2,7) {\nowire};\node at (3,7) {\nowire};\node at (4,7) {\nowire};\node at (5,7) {\are[Orange]};\node at (6,7) {\hwire[Orange]};\node at (7,7) {\hwire[Orange]};\node at (0,6) {\nowire};\node at (1,6) {\nowire};\node at (2,6) {\are[Orange]};\node at (3,6) {\hwire[Orange]};\node at (4,6) {\hwire[Orange]};\node at (5,6) {\jay[Orange]};\node at (6,6) {\are[Navy]};\node at (7,6) {\hwire[Navy]};\node at (0,5) {\are[Navy]};\node at (1,5) {\hwire[Navy]};\node at (2,5) {\hwire[Navy]};\node at (2,5) {\vwire[Orange]};\node at (3,5) {\hwire[Navy]};\node at (4,5) {\hwire[Navy]};\node at (5,5) {\hwire[Navy]};\node at (6,5) {\hwire[Navy]};\node at (6,5) {\vwire[Navy]};\node at (7,5) {\hwire[Navy]};\node at (0,4) {\vwire[Navy]};\node at (1,4) {\nowire};\node at (2,4) {\vwire[Orange]};\node at (3,4) {\nowire};\node at (4,4) {\are[Orange]};\node at (5,4) {\hwire[Orange]};\node at (6,4) {\hwire[Orange]};\node at (6,4) {\vwire[Navy]};\node at (7,4) {\hwire[Orange]};\node at (0,3) {\vwire[Navy]};\node at (1,3) {\nowire};\node at (2,3) {\vwire[Orange]};\node at (3,3) {\nowire};\node at (4,3) {\vwire[Orange]};\node at (5,3) {\are[Navy]};\node at (6,3) {\jay[Navy]};\node at (7,3) {\are[Navy]};\node at (0,2) {\vwire[Navy]};\node at (1,2) {\nowire};\node at (2,2) {\vwire[Orange]};\node at (3,2) {\are[Green]};\node at (4,2) {\hwire[Green]};\node at (4,2) {\vwire[Orange]};\node at (5,2) {\hwire[Green]};\node at (5,2) {\vwire[Navy]};\node at (6,2) {\hwire[Green]};\node at (7,2) {\hwire[Green]};\node at (7,2) {\vwire[Navy]};\node at (0,1) {\vwire[Navy]};\node at (1,1) {\nowire};\node at (2,1) {\vwire[Orange]};\node at (3,1) {\vwire[Green]};\node at (4,1) {\vwire[Orange]};\node at (5,1) {\vwire[Navy]};\node at (6,1) {\are[Navy]};\node at (7,1) {\hwire[Navy]};\node at (7,1) {\vwire[Navy]};\node at (0,0) {\vwire[Navy]};\node at (1,0) {\are[Navy]};\node at (2,0) {\hwire[Navy]};\node at (2,0) {\vwire[Orange]};\node at (3,0) {\hwire[Navy]};\node at (3,0) {\vwire[Green]};\node at (4,0) {\hwire[Navy]};\node at (4,0) {\vwire[Orange]};\node at (5,0) {\hwire[Navy]};\node at (5,0) {\vwire[Navy]};\node at (6,0) {\hwire[Navy]};\node at (6,0) {\vwire[Navy]};\node at (7,0) {\hwire[Navy]};\node at (7,0) {\vwire[Navy]};\draw[thick] (-0.5,-0.5) -- (-0.5,7.5) -- (7.5,7.5) -- (7.5,-0.5) -- (-0.5,-0.5);\node at (0,-1) {1};\node at (8,7) {3};\node at (1,-1) {2};\node at (8,6) {6};\node at (2,-1) {3};\node at (8,5) {1};\node at (3,-1) {4};\node at (8,4) {5};\node at (4,-1) {5};\node at (8,3) {8};\node at (5,-1) {6};\node at (8,2) {4};\node at (6,-1) {7};\node at (8,1) {7};\node at (7,-1) {8};\node at (8,0) {2};\end{tikzpicture}\;
    \begin{tikzpicture}[scale =.4]\node at (3,6) {\shade};\node at (5,6) {\shade};\node at (3,2) {\shade};\node at (6,2) {\shade};\node at (6,3) {\shade};\node at (5,3) {\shade};\grid{7}{7}\node at (0,7) {\vwire[Navy]};\node at (1,7) {\vwire[Navy]};\node at (2,7) {\vwire[Orange]};\node at (3,7) {\vwire[Green]};\node at (4,7) {\vwire[Orange]};\node at (5,7) {\el[Navy]};\node at (6,7) {\hwire[Navy]};\node at (6,7) {\vwire[Navy]};\node at (7,7) {\hwire[Navy]};\node at (7,7) {\vwire[Navy]};\node at (0,6) {\vwire[Navy]};\node at (1,6) {\vwire[Navy]};\node at (2,6) {\el[Orange]};\node at (3,6) {\hwire[Orange]};\node at (3,6) {\vwire[Green]};\node at (4,6) {\hwire[Orange]};\node at (4,6) {\vwire[Orange]};\node at (5,6) {\en[Orange]};\node at (6,6) {\el[Navy]};\node at (7,6) {\hwire[Navy]};\node at (7,6) {\vwire[Navy]};\node at (0,5) {\el[Navy]};\node at (1,5) {\hwire[Navy]};\node at (1,5) {\vwire[Navy]};\node at (2,5) {\hwire[Navy]};\node at (3,5) {\hwire[Navy]};\node at (3,5) {\vwire[Green]};\node at (4,5) {\hwire[Navy]};\node at (4,5) {\vwire[Orange]};\node at (5,5) {\hwire[Navy]};\node at (5,5) {\vwire[Orange]};\node at (6,5) {\hwire[Navy]};\node at (7,5) {\hwire[Navy]};\node at (7,5) {\vwire[Navy]};\node at (0,4) {\nowire};\node at (1,4) {\vwire[Navy]};\node at (2,4) {\nowire};\node at (3,4) {\vwire[Green]};\node at (4,4) {\el[Orange]};\node at (5,4) {\newire[Orange]};\node at (5,4) {\swwire[Orange]};\node at (6,4) {\hwire[Orange]};\node at (7,4) {\hwire[Orange]};\node at (7,4) {\vwire[Navy]};\node at (0,3) {\nowire};\node at (1,3) {\vwire[Navy]};\node at (2,3) {\nowire};\node at (3,3) {\vwire[Green]};\node at (4,3) {\nowire};\node at (5,3) {\el[Orange]};\node at (6,3) {\en[Orange]};\node at (7,3) {\el[Navy]};\node at (0,2) {\nowire};\node at (1,2) {\vwire[Navy]};\node at (2,2) {\nowire};\node at (3,2) {\el[Green]};\node at (4,2) {\hwire[Green]};\node at (5,2) {\hwire[Green]};\node at (6,2) {\hwire[Green]};\node at (6,2) {\vwire[Orange]};\node at (7,2) {\hwire[Green]};\node at (0,1) {\nowire};\node at (1,1) {\vwire[Navy]};\node at (2,1) {\nowire};\node at (3,1) {\nowire};\node at (4,1) {\nowire};\node at (5,1) {\nowire};\node at (6,1) {\el[Orange]};\node at (7,1) {\hwire[Orange]};\node at (0,0) {\nowire};\node at (1,0) {\el[Navy]};\node at (2,0) {\hwire[Navy]};\node at (3,0) {\hwire[Navy]};\node at (4,0) {\hwire[Navy]};\node at (5,0) {\hwire[Navy]};\node at (6,0) {\hwire[Navy]};\node at (7,0) {\hwire[Navy]};\draw[thick] (-0.5,-0.5) -- (-0.5,7.5) -- (7.5,7.5) -- (7.5,-0.5) -- (-0.5,-0.5);\node at (0,-1) {1};\node at (8,7) {6};\node at (1,-1) {2};\node at (8,6) {7};\node at (2,-1) {3};\node at (8,5) {1};\node at (3,-1) {4};\node at (8,4) {3};\node at (4,-1) {5};\node at (8,3) {8};\node at (5,-1) {6};\node at (8,2) {4};\node at (6,-1) {7};\node at (8,1) {5};\node at (7,-1) {8};\node at (8,0) {2};\end{tikzpicture}
    \caption{Note that an instance of the configuration does not necessarily guarantee that the same pipes will crossing at both pipes. However, the crossing pipes can only change if there is another instance of the configuration contained within the configuration.}
    \label{fig:double_config}
\end{figure}

\newpage

\begin{figure}[h!]
    \centering
    \begin{tikzpicture}[scale =.4]\node at (3,1) {\shade};\node at (5,1) {\shade};\node at (5,2) {\shade};\node at (1,5) {\shade};\node at (1,3) {\shade};\node at (2,5) {\shade};\grid{6}{6}\node at (0,6) {\nowire};\node at (1,6) {\nowire};\node at (2,6) {\are[Orange]};\node at (3,6) {\hwire[Orange]};\node at (4,6) {\hwire[Orange]};\node at (5,6) {\hwire[Orange]};\node at (6,6) {\hwire[Orange]};\node at (0,5) {\are[Orange]};\node at (1,5) {\hwire[Orange]};\node at (2,5) {\jay[Orange]};\node at (3,5) {\nowire};\node at (4,5) {\nowire};\node at (5,5) {\are[Green]};\node at (6,5) {\hwire[Green]};\node at (0,4) {\vwire[Orange]};\node at (1,4) {\nowire};\node at (2,4) {\are[Navy]};\node at (3,4) {\hwire[Navy]};\node at (4,4) {\hwire[Navy]};\node at (5,4) {\hwire[Navy]};\node at (5,4) {\vwire[Green]};\node at (6,4) {\hwire[Navy]};\node at (0,3) {\vwire[Orange]};\node at (1,3) {\are[Orange]};\node at (2,3) {\hwire[Orange]};\node at (2,3) {\vwire[Navy]};\node at (3,3) {\hwire[Orange]};\node at (4,3) {\hwire[Orange]};\node at (5,3) {\hwire[Orange]};\node at (5,3) {\vwire[Green]};\node at (6,3) {\hwire[Orange]};\node at (0,2) {\vwire[Orange]};\node at (1,2) {\vwire[Orange]};\node at (2,2) {\vwire[Navy]};\node at (3,2) {\nowire};\node at (4,2) {\are[Green]};\node at (5,2) {\jay[Green]};\node at (6,2) {\are[Navy]};\node at (0,1) {\vwire[Orange]};\node at (1,1) {\vwire[Orange]};\node at (2,1) {\vwire[Navy]};\node at (3,1) {\are[Green]};\node at (4,1) {\hwire[Green]};\node at (4,1) {\vwire[Green]};\node at (5,1) {\hwire[Green]};\node at (6,1) {\hwire[Green]};\node at (6,1) {\vwire[Navy]};\node at (0,0) {\vwire[Orange]};\node at (1,0) {\vwire[Orange]};\node at (2,0) {\vwire[Navy]};\node at (3,0) {\vwire[Green]};\node at (4,0) {\vwire[Green]};\node at (5,0) {\are[Navy]};\node at (6,0) {\hwire[Navy]};\node at (6,0) {\vwire[Navy]};\draw[thick] (-0.5,-0.5) -- (-0.5,6.5) -- (6.5,6.5) -- (6.5,-0.5) -- (-0.5,-0.5);\node at (0,-1) {1};\node at (7,6) {1};\node at (1,-1) {2};\node at (7,5) {5};\node at (2,-1) {3};\node at (7,4) {3};\node at (3,-1) {4};\node at (7,3) {2};\node at (4,-1) {5};\node at (7,2) {7};\node at (5,-1) {6};\node at (7,1) {4};\node at (6,-1) {7};\node at (7,0) {6};\end{tikzpicture}
    \begin{tikzpicture}[scale =.4]\node at (3,1) {\shade};\node at (5,1) {\shade};\node at (5,2) {\shade};\node at (1,5) {\shade};\node at (1,3) {\shade};\node at (2,5) {\shade};\grid{6}{6}\node at (0,6) {\vwire[Orange]};\node at (1,6) {\vwire[Orange]};\node at (2,6) {\el[Navy]};\node at (3,6) {\hwire[Navy]};\node at (3,6) {\vwire[Green]};\node at (4,6) {\hwire[Navy]};\node at (4,6) {\vwire[Green]};\node at (5,6) {\hwire[Navy]};\node at (5,6) {\vwire[Navy]};\node at (6,6) {\hwire[Navy]};\node at (6,6) {\vwire[Navy]};\node at (0,5) {\el[Orange]};\node at (1,5) {\hwire[Orange]};\node at (1,5) {\vwire[Orange]};\node at (2,5) {\en[Orange]};\node at (3,5) {\vwire[Green]};\node at (4,5) {\vwire[Green]};\node at (5,5) {\el[Navy]};\node at (6,5) {\hwire[Navy]};\node at (6,5) {\vwire[Navy]};\node at (0,4) {\nowire};\node at (1,4) {\vwire[Orange]};\node at (2,4) {\el[Orange]};\node at (3,4) {\hwire[Orange]};\node at (3,4) {\vwire[Green]};\node at (4,4) {\hwire[Orange]};\node at (4,4) {\vwire[Green]};\node at (5,4) {\hwire[Orange]};\node at (6,4) {\hwire[Orange]};\node at (6,4) {\vwire[Navy]};\node at (0,3) {\nowire};\node at (1,3) {\el[Orange]};\node at (2,3) {\hwire[Orange]};\node at (3,3) {\hwire[Orange]};\node at (3,3) {\vwire[Green]};\node at (4,3) {\hwire[Orange]};\node at (4,3) {\vwire[Green]};\node at (5,3) {\hwire[Orange]};\node at (6,3) {\hwire[Orange]};\node at (6,3) {\vwire[Navy]};\node at (0,2) {\nowire};\node at (1,2) {\nowire};\node at (2,2) {\nowire};\node at (3,2) {\vwire[Green]};\node at (4,2) {\el[Green]};\node at (5,2) {\en[Green]};\node at (6,2) {\el[Navy]};\node at (0,1) {\nowire};\node at (1,1) {\nowire};\node at (2,1) {\nowire};\node at (3,1) {\el[Green]};\node at (4,1) {\hwire[Green]};\node at (5,1) {\hwire[Green]};\node at (5,1) {\vwire[Green]};\node at (6,1) {\hwire[Green]};\node at (0,0) {\nowire};\node at (1,0) {\nowire};\node at (2,0) {\nowire};\node at (3,0) {\nowire};\node at (4,0) {\nowire};\node at (5,0) {\el[Green]};\node at (6,0) {\hwire[Green]};\draw[thick] (-0.5,-0.5) -- (-0.5,6.5) -- (6.5,6.5) -- (6.5,-0.5) -- (-0.5,-0.5);\node at (0,-1) {1};\node at (7,6) {3};\node at (1,-1) {2};\node at (7,5) {6};\node at (2,-1) {3};\node at (7,4) {1};\node at (3,-1) {4};\node at (7,3) {2};\node at (4,-1) {5};\node at (7,2) {7};\node at (5,-1) {6};\node at (7,1) {4};\node at (6,-1) {7};\node at (7,0) {5};\end{tikzpicture}
    \caption{The above example illustrates the importance of requiring chains of alternating elbows.
    This BPD does not have the configuration as there is not a chain of alternating elbows and thus the co-BPD is reduced.}
    \label{fig:issue}
\end{figure}
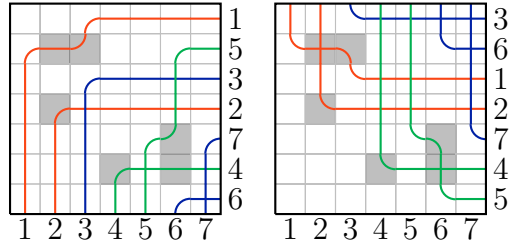

\begin{lemma}
\label{l:jay-two-pipes}
    If a pipe $r$ in a bumpless pipe dream contains a $\inlinejay$-elbow, there must be pipes $s$ and $t$ with $r < s < t$ that exit in the order $rts$.
\end{lemma}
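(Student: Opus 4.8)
The plan is to first reformulate pipes as monotone lattice paths. Since a pipe enters from the bottom and exits on the right, and every tile ($\inlineare$, $\inlinejay$, flats, crosses, verticals) only ever moves a pipe upward or rightward, each pipe traces a weakly monotone path whose row-coordinate never increases and whose column-coordinate never decreases as one travels from its entry to its exit. Under this description an are-elbow $\inlineare$ is a convex corner (the path turns from going up to going right), while a jay-elbow $\inlinejay$ is a concave inner corner (the path turns from going right to going up). In particular a pipe carrying a jay must have at least two maximal upward runs, and it exits strictly above the row of any of its jays.

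Next I would fix notation by choosing the jay of $r$ that is closest to its exit (equivalently, the topmost one). Writing this jay at position $(i,j)$, the monotone-path picture guarantees that after the jay pipe $r$ travels straight up column $j$ to its exit row $a<i$ and then straight right to the boundary, and that before the jay it arrives along row $i$ from the left, having turned upward in column $r$. Thus pipe $r$ occupies exactly the left and upper boundary of the rectangle determined by columns $[r,j]$ and rows $[a,i]$, together with the exit run in row $a$.

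Then I would produce the two witnesses. Let $t$ be the pipe exiting in row $i$ (the jay's row); since $r$ exits strictly above it, I want to show $t>r$ by a non-crossing argument: pipe $r$ meets row $i$ only in columns $[r,j]$ and meets the region below row $i$ only in column $r$, whereas pipe $t$ reaches the right boundary of row $i$ from columns to the right of $j$, so the two monotone paths are disjoint; as the higher-exiting of two non-crossing pipes must be the smaller-labeled one, this gives $r<t$. For the middle witness $s$, I would examine the pocket lying to the lower right of the jay, bounded on the left by pipe $r$'s upward segment in column $j$ and on the right by the upward segment of $t$: the pipe entering column $j$ has label $j>r$, is blocked by the jay, and is forced to leave this pocket below row $i$. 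I would argue that either it, or another pipe emerging from the pocket, exits below row $i$ with a label strictly between $r$ and $t$, giving $r<s<t$ with exit order $r,t,s$.

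The main obstacle is exactly the construction of $s$: nothing in the monotone-path picture alone prevents an intermediate-labeled pipe from escaping upward through row $i$ in a column to the right of $j$, so I must rule out that every pipe with label in $(r,t)$ exits above row $i$. I expect to resolve this by exploiting the extremal (topmost) choice of the jay together with the constraint that any pipe passing from the lower-right region into the upper-left region must cross pipe $r$, which pins down the relative orders; a short case analysis on where the column-$j$ pipe makes its turn should then force a label in $(r,t)$ to exit below row $i$. A secondary point is to make the non-crossing argument valid for arbitrary, possibly non-reduced, BPDs, where a pair may cross twice; here I would use that double crossings are read as turns for the traced permutation, so the ordering statements are unaffected, and if needed the chain structure recorded in Lemma~\ref{l:config}.
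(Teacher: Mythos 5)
The step you present as the solid part of your argument is the one that fails. You set $t$ to be the pipe exiting in the jay's row $i$ and claim $r<t$ on the grounds that $r$ and $t$ are disjoint; but knowing that $t$ reaches row $i$ through a final $\inlineare$-elbow in a column to the right of $j$ constrains only the tail of $t$. Below row $i$, the pipe $t$ can begin far to the left of $r$, cross $r$, pass underneath the jay, and only then climb up to row $i$ in a column right of $j$. Concretely, consider the reduced $5\times 5$ BPD tracing $w=21534$ built as follows (coordinates are (row, column), rows numbered from the top): pipe $1$ runs up column $1$, turns east at $(3,1)$, runs along row $3$ crossing pipe $2$ at $(3,2)$, makes a $\inlinejay$-elbow at $(3,4)$, and exits in row $2$ via an $\inlineare$-elbow at $(2,4)$; pipe $2$ runs up column $2$, turns east at $(2,2)$, makes a $\inlinejay$-elbow at $(2,3)$, and exits in row $1$; pipe $3$ turns east at $(4,3)$ and exits in row $4$; pipe $4$ turns east at $(5,4)$ and exits in row $5$; pipe $5$ runs straight up column $5$ and exits in row $3$ via an $\inlineare$-elbow at $(3,5)$. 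Take $r=2$, whose unique (hence topmost) $\inlinejay$-elbow is at $(i,j)=(2,3)$. The pipe exiting row $2$ is pipe $1$, so your $t$ equals $1<2=r$, and pipes $1$ and $2$ do cross, at $(3,2)$ --- even though pipe $2$'s pre-jay path is exactly the L-shape your write-up assumes. So the disjointness claim, and with it $r<t$, is false; and since your construction of $s$ (which you already flag as the unresolved obstacle) sits on top of $r<t$, the proposal does not prove the lemma. The lemma does hold in this example, but with witnesses $t=5$, $s=3$, neither of which exits in the jay's row.

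Two further points. First, your structural claim that before its last jay the pipe $r$ ``turned upward in column $r$'' and occupies the boundary of a rectangle is also false in general: a pipe may make an arbitrary staircase of alternating $\inlineare$- and $\inlinejay$-elbows before its last jay, so only the local picture (horizontal run into the jay, then straight up, one $\inlineare$-elbow, straight out) is guaranteed. Second, note the contrast with the paper's proof, which selects witnesses by entry column rather than by exit row: there $s$ is the pipe entering the jay's column $j$ (it must make an $\inlineare$-elbow strictly below row $i$ to avoid the jay, so it exits below row $i$ whenever it has no $\inlinejay$-elbow), $t$ is the pipe entering the last column, and the argument recurses on $s$ whenever $s$ has a $\inlinejay$-elbow of its own. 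The phenomenon your argument misses --- pipes sneaking under the jay and climbing back up --- is exactly what forces some such recursive device: any pipe that does this must itself make a $\inlinejay$-elbow strictly southeast of $(i,j)$, so one can induct on jays ordered toward the lower-right corner of the grid; no purely local non-crossing argument at $(i,j)$ will suffice.
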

\begin{proof}
    A bumpless pipe dream cannot have a $\inlinejay$-elbow in the last column, so there is a pipe $t$ that enters in the last column, and a pipe $s$ with $s < t$ that enters in the column where $r$ makes its $\inlinejay$-elbow. Observe that $s$ must make a $\inlineare$-elbow before the row the $\inlinejay$-elbow of $r$ is in to avoid a collision. So if $s$ has no $\inlinejay$-elbows, it follows that the pipes exit in the order $rts$, as $t$ exits out of the row that the $\inlinejay$-tile of $r$ is in. Otherwise, if $s$ has a $\inlinejay$-elbow, we repeat the argument recursively. Since the BPD is finite, this process eventually terminates.
\end{proof}

\begin{lemma}\label{lem:oneFlat}
    If a pipe $r$ has a $\inlinehwire$-tile and $\inlinejay$-elbow sandwiched between two $\inlineare$-elbows, the the BPD contains either a $1423$, $1432$, or $13254$ pattern where the $1$ is from the $r$ pipe.
\end{lemma}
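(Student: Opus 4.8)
The plan is to chase pipes and read the pattern directly off the one-line notation, using that a pipe's entry column is its value while the top-to-bottom order of the exits gives the positions. Along pipe $r$ the hypothesis produces, reading from entry to exit, an $\inlineare$-elbow $E_1$ where $r$ first turns east, a $\inlinehwire$ flat further east in the same row, a $\inlinejay$-elbow $J$ where $r$ turns north, and a second $\inlineare$-elbow $E_2$ where $r$ turns east again. Write $\rho$ for the (shared) row of $E_1$, the flat, and $J$, and $c_1 < c_f < c_2$ for their columns. Since $r$ enters in column $r$ and a pipe only ever moves east or north, $c_1 \ge r$; and since $E_2$ sits strictly above row $\rho$, pipe $r$ exits above row $\rho$. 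Throughout, $r$ will be the leading $1$ of each pattern.

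The auxiliary pipes come from two sources. Applying Lemma~\ref{l:jay-two-pipes} to the jay $J$ gives pipes $s,t$ with $r < s < t$ exiting in the order $r,t,s$ from top to bottom, with $s$ entering in column $c_2$ and $t$ exiting in row $\rho$; in particular $r$ exits above both $t$ and $s$. The flat supplies a fourth pipe: the tile at column $c_f$, row $\rho$ carries only a horizontal strand, so the pipe $u$ entering in column $c_f$ cannot pass vertically through it and must turn east at an $\inlineare$-elbow in a row strictly below $\rho$. From $c_1 < c_f < c_2 = s$ and $c_f > r$ we obtain the value order $r < u < s < t$, so $r,u,s,t$ play the roles of $1,2,3,4$.

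Everything now reduces to the exit row of $u$ relative to $t$ (row $\rho$) and $s$ (below $\rho$). If $u$ exits below $s$, the exits read $r,t,s,u$ with values $1,4,3,2$, a $1432$ pattern; if $u$ exits between $t$ and $s$, they read $r,t,u,s$ with values $1,4,2,3$, a $1423$ pattern. In both subcases $u$ exits below $t$, hence below $r$, so $r$ is highest and the pattern is immediate once the exit order is recorded.

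The remaining subcase, where $u$ exits above $t$, is the crux. Here $r,u,t,s$ realize only $1243$, which is not on the list, so a fifth pipe is required. The key observation is that $u$ turned east below row $\rho$ yet exits above it, so it must turn back north, i.e.\ $u$ itself carries a $\inlinejay$-elbow; re-applying Lemma~\ref{l:jay-two-pipes} to that elbow yields pipes $s',t'$ with $u < s' < t'$ exiting in order $u,t',s'$. I then expect $r,u,t',s'$ together with one of $t,s$ to assemble into a $13254$ pattern with $r$ as the $1$. The main obstacle is precisely the bookkeeping in this case: one must pin down how $t$ and $s$ interleave with $u,s',t'$ in both value and exit row, and in particular rule out that $u$ — which may cross $r$ on its way up — exits above $r$ and thereby dislodges $r$ from the leading position. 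Dispatching this last case completes the trichotomy and yields $1432$, $1423$, or $13254$ as claimed.
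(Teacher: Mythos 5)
Your proposal has a genuine gap, and it is exactly where you say it is: the subcase in which the pipe $u$ (entering in the flat's column $c_f$) exits above $t$ is the only subcase that can produce the $13254$ pattern, and you leave it at ``I then expect $r,u,t',s'$ together with one of $t,s$ to assemble into a $13254$ pattern.'' Nothing in the proposal dispatches the scenario you yourself flag: $u$ turns east strictly below row $\rho$, later turns north, crosses the row-$\rho$ segment of $r$ at a crossing tile between $c_f$ and $c_2$ (or crosses the portion of $r$ after its second $\inlineare$-elbow), and exits above $r$ --- in which case $r$ is dislodged as the ``1'' and all of your candidate patterns collapse. There is also a secondary overreach: you attribute to Lemma~\ref{l:jay-two-pipes} more than it states. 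That lemma only asserts the existence of pipes $s<t$ (both exceeding $r$) exiting in the order $r,t,s$; your claims that $s$ enters in column $c_2$ and $t$ exits in row $\rho$ hold only in the base case of that lemma's proof, and its recursive case replaces $s,t$ by other pipes. Since your value ordering $r<u<s<t$ (hence the identification of the $1432$ and $1423$ subcases) rests on the entry column of $s$, this too needs repair, though it is repairable by noting that the recursion's witnesses still enter weakly east of $c_2$.

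For comparison, the paper's proof avoids pipe-chasing entirely and contains precisely the inductive device your missing case needs. It observes that the hypothesized shape of $r$ (a $\inlinehwire$ and $\inlinejay$ between two $\inlineare$-elbows) forces, in the Rothe BPD of $w$, two blank tiles strictly southeast of $r$, and then runs a case analysis on which of those blank tiles are blocked by other pipes: if neither is blocked one reads off $1423$ or $1432$ from the boundary pipes; if one is blocked, the blocked tile contributes a further inverted pair and one gets $13254$, $1423$, or $1432$; and if both are blocked, the blocking forces a pipe strictly southeast of $r$ satisfying the same hypotheses, so one recurses and then substitutes $r$ for that pipe as the ``1'' of the resulting pattern. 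That last step --- pass to a pipe further southeast, finish there, then swap $r$ back in --- is what lets the paper sidestep the interleaving bookkeeping (and the possibility of $u$ escaping above $r$) that your approach still has to confront.
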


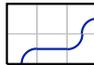
\begin{figure}[h!]
    \begin{tikzpicture}[scale =.4]\grid{2}{1};\node at (2,1) {\are[Navy]};\node at (0,0) {\are[Navy]};\node at (1,0) {\hwire[Navy]};\node at (2,0) {\jay[Navy]};\draw[thick] (-0.5,-0.5) -- (-0.5,1.5) -- (2.5,1.5) -- (2.5,-0.5) -- (-0.5,-0.5);\end{tikzpicture}
    \caption{An example of such a pipe $r$ from Lemma~\ref{lem:oneFlat}.}
    \label{fig:oneFlat}
\end{figure}

\begin{proof}
    Let $r$ be a pipe as described in the lemma statement.
    Note that in order for the pipe to be in such a position it must have drooped into row where there are two boxes it could droop into.
    Consider the BPD's Rothe BPD, in this BPD there must be two blank tiles below and right of the $r$ pipe.
    However, some of those blank tiles may be blocked by other pipes, we will consider each case.\\

    \textbf{Case 1:} Suppose neither blank tile is blocked.
    Then the right boundary pipe must enter after the two bottom boundary pipes and exit before them.
    Therefore, the BPD either has a $1423$ or $1432$ pattern with the $r$ serving as the $1$ pipe.

    \textbf{Case 2:} Suppose that only right blank tile is blocked.
    Then the right boundary pipe, $s$, for the unblocked blank tile must enter after the bottom boundary pipe for the unblocked blank and exit before that same pipe.
    Additionally, there must be a blank tile below and to the right of $s$ meaning it has two boundary pipes that exit in reverse order.
    Therefore, the BPD either has a $13254$, $13524$, or $13542$ pattern depending on when the bottom boundary pipe for the unblocked blank tile exits.
    This simplifies to the BPD having a  $13254$, $1423$, or $1432$ pattern.

    \textbf{Case 3:} Suppose both blank tiles are blocked.
    Then there must be another pipe below and to the right of pipe $r$ for which the conditions this lemma apply.
    As the BPD has a finite number of pipes eventually there will be a pipe $s$ for which a case other than this case applies.
    Take the pattern achieved in that case and replace the $s$ in the pattern with the $r$ pipe to achieve the same pattern.
\end{proof}

\begin{lemma}\label{lem:twoFlats}
    If a pipe $r$ has two $\inlinehwire$-tiles and one $\inlinejay$-elbow sandwiched between two $\inlineare$-elbows, the the BPD contains either a $1423$, $13254$, or $15432$ pattern where the $1$ is from the $r$ pipe.
\end{lemma}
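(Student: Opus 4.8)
The plan is to mirror the proof of Lemma~\ref{lem:oneFlat}: pass to the Rothe BPD, read off the inversions forced by the jog of pipe $r$, and organize the argument by which blank tiles below and to the right of $r$ are blocked. The new feature, and the reason the outcome upgrades from the one-flat case, is the second flat, which supplies one additional intermediate pipe and turns the would-be $1432$ into $15432$. First I would record the shape of the configuration along $r$: an $\inlineare$-elbow $E_1$, then (in one row) two $\inlinehwire$-tiles and a $\inlinejay$-elbow $J$, then a vertical run up to a second $\inlineare$-elbow $E_2$, so that $r$ exits above the row of $J$. Applying Lemma~\ref{l:jay-two-pipes} to $J$ yields pipes $r<s<t$ exiting in the order $r,t,s$, with $t$ exiting in the row of $J$ and $s$ entering the column of $J$; thus $r$ exits above $t$, and $t$ (the largest) exits above $s$.

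The two flats contribute the extra structure. Each $\inlinehwire$-tile of $r$ lies in a column whose entering pipe must make an $\inlineare$-elbow strictly below the row of the flats, since otherwise it would pass vertically through that cell and the tile would be a $\inlinecross$ rather than a flat. Hence the two flat-column pipes, together with $s$, give three pipes of pairwise distinct intermediate value (all strictly greater than $r$ and strictly less than $t$). In the generic situation these three exit below the row of $J$, hence below $t$, while $r$ exits above $t$. I would dispatch this case by a purely order-theoretic observation: a sequence of three distinct values either is strictly decreasing or contains an increasing pair. If some two of the three intermediate pipes exit in increasing order of value, then $r$, $t$, and that pair form a $1423$ pattern; if all three exit in strictly decreasing order of value, then $r$, $t$, and the three intermediate pipes form a $15432$ pattern. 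In either subcase $r$ is the ``$1$''.

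The non-generic situations are handled exactly as in Lemma~\ref{lem:oneFlat}, by casework on which of the blank tiles below and to the right of $r$ in the Rothe BPD are blocked. When an intermediate pipe carries its own $\inlinejay$-elbow it may climb above the row of $J$ and so fail to exit below $t$; applying Lemma~\ref{l:jay-two-pipes} (or Lemma~\ref{lem:oneFlat}) to the blocking pipe then produces a nested secondary descent lying above the primary descent, which is precisely a $13254$ pattern anchored at $r$. As in the fully-blocked case of Lemma~\ref{lem:oneFlat}, if every relevant blank is blocked we pass to the deeper pipe for which a non-blocked case applies, extract its pattern, and substitute $r$ for that pipe's role; finiteness of the BPD guarantees termination.

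The main obstacle I anticipate is the bookkeeping in the blocked cases: confirming that the extra flat never creates pattern types outside $\{1423,13254,15432\}$, and in particular that every arrangement of the three intermediate pipes is either fully decreasing (forcing $15432$) or contains an increasing pair (forcing $1423$), while blocked configurations contribute only $13254$ (or collapse to one of the shorter patterns they contain). Some care is also needed to verify that the three intermediate pipes genuinely have distinct values strictly between $r$ and $t$, and that after any recursion the role of the ``$1$'' is still played by the original pipe $r$.
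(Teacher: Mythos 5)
Your generic case is sound, and it is essentially the paper's Case 1 in disguise: when the two flat-column pipes and $s$ all exit below $t$, the dichotomy ``an increasing pair gives $1423$, all decreasing gives $15432$'' is exactly the right observation, with $r$ as the $1$. The genuine gap is everything else. Your non-generic claim --- that an intermediate pipe $p$ with a $\inlinejay$-elbow yields ``a nested secondary descent lying above the primary descent, which is precisely a $13254$ pattern anchored at $r$'' --- is unjustified and false as stated. The witnesses that Lemma~\ref{l:jay-two-pipes} produces for $p$ need not nest above the pair $(t,s)$: the larger witness can be the very same last-column pipe $t$, and the smaller witness $u$ (with $s<u<t$) can interleave with $s$ and $t$ in both value and exit position. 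Nothing in your argument excludes the arrangement where $p$ exits between $r$ and $t$ and $u$ exits between $t$ and $s$: then $r,p,t,u,s$ exit in that order with relative values $1,2,5,4,3$, a $12543$ pattern, which contains \emph{none} of $1423$, $13254$, $15432$. Ruling this configuration out, or finding the correct pipes to use when it occurs, is precisely the finer casework you skipped. Note also that ``$t$ exits in the row of $J$'' and ``$s$ enters the column of $J$'' are details of the \emph{proof} of Lemma~\ref{l:jay-two-pipes} that hold only in its non-recursive case (when $s$ has no $\inlinejay$-elbows); they are not consequences of its statement, so even your generic case silently assumes more than the cited lemma provides.

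Moreover, your guiding prediction that ``blocked configurations contribute only $13254$'' is contradicted by the paper's own proof: in its Cases 2 and 3 (one or two of the three Rothe-diagram blanks blocked), all three patterns $13254$, $1423$, and $15432$ arise, according to whether certain boundary pipes exit before, between, or after the others. In particular, the two-blocked case must recurse through Lemma~\ref{lem:oneFlat}, may receive back a $1432$ anchored at the deeper pipe --- a pattern \emph{not} in the target set --- and must then upgrade it by tracking where the extra bottom-boundary pipe exits relative to that $1432$; substituting $r$ into the deeper pipe's role is not enough by itself. So your skeleton (generic dichotomy plus full-block recursion) matches the paper's, but the partially blocked cases, which constitute the bulk of the paper's argument, are missing from your proposal, and the outcome you predict for them is wrong.
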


\begin{figure}[h!]
    \begin{tikzpicture}[scale =.4]\grid{3}{1};\node at (3,1) {\are[Navy]};\node at (0,0) {\are[Navy]};\node at (1,0) {\hwire[Navy]};\node at (2,0) {\hwire[Navy]};\node at (3,0) {\jay[Navy]};\draw[thick] (-0.5,-0.5) -- (-0.5,1.5) -- (3.5,1.5) -- (3.5,-0.5) -- (-0.5,-0.5);\end{tikzpicture}
    \caption{An example of such a pipe $r$ from Lemma~\ref{lem:twoFlats}}
    \label{fig:twoFlats}
\end{figure}

\begin{proof}
    Let $r$ be a pipe as described in the lemma statement.
    Note that in order for the pipe to be in such a position it must have drooped into row where there are two boxes it could droop into.
    Consider the BPD's Rothe BPD, in this BPD there must be three blank tiles below and right of the $r$ pipe.
    However, some of those blank tiles may be blocked by other pipes, we will consider each case.\\

    \textbf{Case 1:} Suppose none of the blank tiles are blocked.
    Then the right boundary pipe enters after and exits before the three bottom boundary pipes.
    Thus, the BPD has a $1423$ or $15432$ pattern with the $1$ from the $r$ pipe, the largest number from the right boundary pipe, and the rest from the bottom boundary pipes.

    \textbf{Case 2:} Suppose one of the blank tiles is blocked.
    Then the right boundary pipe of the unblocked tiles enters after and exits before the two bottom boundary pipes of unblocked tiles.
    Further there must exist another blank tile below and right of the unblock tiles' right boundary pipe meaning that there are two pipes in reverse order that enter and exit after the right boundary pipe.
    Suppose the unblocked tiles' bottom boundary pipes exit in order, then the BPD has a $1423$ pattern with the $r$ pipe as the $1$.
    Therefore, suppose otherwise.
    If one of the unblocked tiles' bottom boundary pipes exits before the boundary pipes of the blocked tile, then the BPD has a $13254$ pattern. This pattern is made from (listed in the order they show up in the pattern) the $r$ pipe, the unblock tiles' right boundary pipe, one of the unblocked tiles' bottom boundary pipes, and the boundary pipes of the blocked tiles.
    If one of the unblocked tiles' bottom boundary pipes exits inbetween the boundary pipes of the blocked tile, then the BPD has a $1423$ pattern.
    This pattern is made from the $r$ pipe, the larger blocked tile boundary pipe, the inbetween boundary pipe, and the smaller blocked tile boundary pipe.
    If both unblocked tiles' bottom boundary pipes exit after the boundary pipes of the blocked tile, then the BPD has a $15432$ pattern.
    This pattern is made from the $r$ pipe, the blocked tile's boundary pipes, and the unblocked tiles' boundary pipes (as we assumed they occur in reverse order).

    \textbf{Case 3:} Suppose two of the blank tiles are blocked.
    Then the right boundary pipe of the unblocked blank tile, pipe $s$, enters after and exits before the bottom boundary pipe of the unblocked blank tile, pipe $t$.
    Further, $s$ satisfies the conditions of Lemma~\ref{lem:oneFlat}.
    If $s$ serves as the $1$ pipe to a $1423$ or $13254$ pattern we can replace $s$ with $r$ in those patterns and achieve our result, therefore assume $s$ serves as the $1$ pipe to a $1432$ pattern.
    Now we consider where $t$ ends up.
    If $t$ ends up before the $4$ in the $1432$ made from $s$, then the BPD has a $132654$ pattern from adding in the $r$ and $t$ pipes.
    This pattern contains a $13254$ pattern.
    If $t$ ends up between any of the pipes of the $1432$ made from $s$, then the BPD has a $1423$ made from $r$, the $4$ from the $1432$, $t$, and the $2$ from the $1432$.
    If $t$ ends up after all the pipes in the $1432$, then the BPD has a $136542$ pattern from adding in the $r$ and $t$ pipes.
    This pattern contains a $15432$ pattern.

    \textbf{Case 4:} Suppose both blank tiles are blocked.
    Then there must be another pipe below and to the right of pipe $r$ for which the conditions this lemma apply.
    As the BPD has a finite number of pipes eventually there will be a pipe $s$ for which a case other than this case applies.
    Take the pattern achieved in that case and replace the $s$ in the pattern with the $r$ pipe to achieve the same pattern.
\end{proof}

\begin{lemma}\label{lem:doubleFlat}
    If a pipe $r$ has a $\inlinehwire$-tile, $\inlinejay$-elbow, and $\inlinevwire$-tile sandwiched between two $\inlineare$-elbows, then the BPD contains either $1423$, $13254$, $14532$, or $15432$.
\end{lemma}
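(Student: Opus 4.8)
The plan is to argue through the Rothe BPD of $w$, exactly in the style of Lemmas~\ref{lem:oneFlat} and~\ref{lem:twoFlats}. First I would record the shape of the configuration: an $\inlineare$, then a $\inlinehwire$, then a $\inlinejay$, then a $\inlinevwire$, then a second $\inlineare$ is precisely the image of a single basic droop performed on a $3\times 3$ active region (the first droop form, whose bottom row contains one flat and whose right column contains one vertical wire). Hence, for pipe $r$ to occupy this position it must have drooped into a region possessing both one unit of horizontal room and one unit of vertical room, so in the Rothe BPD there is a $2\times 2$ array of blank ($\inlinenowire$) tiles weakly below and to the right of the corner of $r$. This matches the bookkeeping of the earlier lemmas: one flat forced two blanks in a single row (Lemma~\ref{lem:oneFlat}) and two flats forced three (Lemma~\ref{lem:twoFlats}), whereas here the extra vertical wire makes the array two rows tall, i.e.\ a $2\times 2$ block. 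Writing $\{i,i+1\}\times\{j,j+1\}$ for this block, its four boundary pipes are the two \emph{right-boundary} pipes exiting in rows $i$ and $i+1$ (which enter in columns $>j+1$, hence carry large labels) and the two \emph{bottom-boundary} pipes $j$ and $j+1$ (which exit in rows $>i+1$); together with $r$, which exits above row $i$ and is the smallest of the five, these are the strands from which I read the pattern.

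Next I would run the blocking case analysis, paralleling the four cases of Lemma~\ref{lem:twoFlats} according to how many cells of the block carry a through-pipe in the Rothe BPD. In the fully unblocked case all four boundary pipes are present and, by the diagram-cell inversion relations, each right-boundary pipe enters after and exits higher than each bottom-boundary pipe; reading exits top to bottom therefore gives $r$, then the two large right-boundary pipes, then the two medium bottom-boundary pipes, so that depending on the internal order of each pair this word is one of $15432$, $14532$, $15423$, $14523$, of which the first two lie in the target set and the last two must be excluded (see below). When exactly one cell of the block is blocked, the blocking strand supplies a fifth pipe that exits between the others and the configuration collapses to a $13254$ or to $1423$, just as the singly-blocked sub-case of Lemma~\ref{lem:twoFlats} produced $13254$. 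When the block is fully blocked there is again a pipe below and to the right of $r$ satisfying the hypotheses of this lemma or of Lemma~\ref{lem:oneFlat}, and I would recurse, substituting $r$ for that pipe's role in the resulting pattern; finiteness of the BPD guarantees termination.

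The hard part is the unblocked case, where I must show that the five relevant strands never realize the near-misses $15423$ or $14523$. Concretely, the naive enumeration of the two right-boundary pipes (in either order) against the two bottom-boundary pipes (in either order) produces four words, two of which are not in $\{1423,13254,14532,15432\}$, and the content of the lemma is that the offending order of the bottom-boundary pipes $j,j+1$ cannot occur. I expect to rule it out by showing that if pipe $j$ exited above pipe $j+1$ (that is, $w^{-1}_{j}<w^{-1}_{j+1}$) then the $2\times 2$ block would force an additional diagram cell --- equivalently an extra crossing strand --- which either re-blocks the block (throwing us into the singly- or fully-blocked case) or directly exhibits a $13254$ with $r$ as its initial $1$. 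Pinning down this forced inversion, and carrying the substitution bookkeeping through the fully-blocked recursion, is the crux; everything else is routine case-checking against the target set.
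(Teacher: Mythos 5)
Your reduction to the Rothe BPD and its $2\times 2$ block of blank tiles southeast of $r$ is the same first move as the paper's proof, and your recursion in the fully blocked case matches the paper's final case. The fatal problem is the step you single out as the crux. You claim the unblocked case yields one of the words $15432$, $14532$, $15423$, $14523$, and that the last two ``must be excluded.'' They must not, and cannot, be excluded. The lemma asserts \emph{pattern containment}, and both $14523$ and $15423$ contain $1423$ (take the subsequence of values $1,4,2,3$, resp.\ $1,5,2,3$); since $1423$ is in the target set, these exit orders are already acceptable, which is exactly how the paper disposes of its Case 1 (``the BPD has a pattern of $1423$, $14532$, or $15432$''). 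Worse, the statement you propose to prove --- that the two bottom-boundary pipes cannot exit in increasing order --- is false. Take $w=14523$: its Rothe diagram is precisely the $2\times 2$ block in rows $\{2,3\}$ and columns $\{2,3\}$, and drooping pipe $1$ from $(1,1)$ into $(3,3)$ gives a BPD in which pipe $1$ carries exactly the $\inlineare$, $\inlinehwire$, $\inlinejay$, $\inlinevwire$, $\inlineare$ configuration of the hypothesis, while the bottom-boundary pipes $2$ and $3$ exit in rows $4$ and $5$, i.e.\ in your ``offending'' increasing order, with no additional diagram cell and no extra crossing strand anywhere. Any argument forcing such a cell would be proving something false; the correct resolution is simply the containment observation above.

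A secondary gap: your case analysis treats zero blocked cells, one blocked cell, and all four blocked, but omits the intermediate configurations that the paper handles separately --- right half blocked, bottom half blocked, and exactly one tile unblocked (the paper's Cases 3, 4, 5), each of which needs its own pipe-chasing argument (e.g.\ the ``pipe in between the two blocked tiles'' subcase producing $1423$). Also, in your one-cell-blocked paragraph, a blocking strand alone does not immediately collapse the picture to $13254$ or $1423$: the paper additionally uses that, for the block to eventually form, there must be a further blank tile below and to the right of the blocking pipe, and it is the two extra reverse-ordered pipes this forces that produce the $143265 \supseteq 13254$ pattern. Once the false exclusion step is replaced by the containment observation and the missing blocking cases are filled in, your outline becomes essentially the paper's proof.
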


\begin{figure}[h!]
    \begin{tikzpicture}[scale =.4]\grid{2}{2};\node at (2,2) {\are[Navy]};\node at (2,1) {\vwire[Navy]};\node at (0,0) {\are[Navy]};\node at (1,0) {\hwire[Navy]};\node at (2,0) {\jay[Navy]};\draw[thick] (-0.5,-0.5) -- (-0.5,2.5) -- (2.5,2.5) -- (2.5,-0.5) -- (-0.5,-0.5);\end{tikzpicture}
    \caption{An example of such a pipe $r$ from Lemma~\ref{lem:doubleFlat}}
    \label{fig:doubleFlat}
\end{figure}

\begin{proof}
    Let $r$ be a pipe as described in the lemma statement.
    In order for the $r$ pipe to have a $\inlinehwire$-tile, $\inlinejay$-elbow, and $\inlinevwire$-tile sandwiched between two $\inlineare$-elbows at some point there must of been a square of blank tiles that $r$ could droop into.
    Consider the BPD's Rothe BPD, there the $r$ pipe must have four blank tiles below and right of it.
    However, for some of those blank tiles the $r$ pipe may not be able to immediately droop into as they are blocked by other pipes.
    We will consider each possible case of blocks separately.\\
    
    \textbf{Case 1:} Suppose there is nothing blocking the $r$ pipe from drooping into the bottom right corner of a square of four blank tiles.
    The right boundary of the square must be made by two pipes entering after and exiting before the two pipes that make the bottom boundary of the square.
    Thus, the BPD has a pattern of $1423$, $14532$, or $15432$ with the $r$ pipe forming the $1$.
    
    \textbf{Case 2:} Suppose only the bottom right corner of the blank square is blocked.
    The the boundary of the rest of the square creates a $1432$ pattern with the $r$ pipe as the $1$.
    Further, there must be a blank tile below and to the right of the rest of the blank tiles in the square in order for the square to eventually be formed.
    This forces there to be two pipes greater than all the other boundary pipes that exit in reverse order.
    Therefore, the BPD has a $143265$ pattern which in turn contains a $13254$ pattern.
    
    \textbf{Case 3:} Suppose that the right half of the blank square is blocked.
    Suppose that of the blocked blank tiles there is a pipe inbetween them that entered after the $r$ pipe.
    The the pipe making the bottom boundary of the blocked blank tiles must enter after and below the inbetween pipe and the top right boundary must enter even further right and exit above the inbetween pipe.
    Therefore, the BPD has a $1423$ pattern with the $r$ pipe as the $1$, the top right boundary as the $4$, the inbetween pipe as the $2$, and the bottom boundary as the $3$.
    Suppose instead that there is no pipe inbetween the blocked blank tiles.
    This forces the two pipes creating the bottom boundary of the square to be exit below the two pipes creating the right boundary.
    Thus, the BPD has a pattern of $1423$, $14532$, or $15432$ with the $r$ pipe forming the $1$.
    
    \textbf{Case 4:} Suppose that the bottom half of the blank square is blocked.
    Note that the two pipes that create the right boundary must enter right and exit above the two pipes that create the bottom boundary of the blank tiles below the blocking pipe.
    Therefore, the BPD has a pattern of $1423$, $14532$, or $15432$ with the $r$ pipe forming the $1$.
    
    \textbf{Case 5:} Suppose that only a single blank tile is unblocked.
    This forces the bottom and right boundary pipes of this tile to be in reverse order
    In order for the blank square to eventually be formed there must be another blank tile below and right of the two pipes bounding the unblocked blank tile.
    Then that blank tile also has two boundary pipes in reverse order and as the blank tile is below and right of the boundary pipes of the unblocked blank tile so are the these boundary pipes.
    Therefore, the BPD has a pattern of $13254$ where the $1$ is the $r$ pipe, the $2$ and $3$ are the unblocked boundary pipes, and the $4$ and $5$ are boundary pipes for one of the blocked blank tiles.
    
    \textbf{Case 6:} Suppose all the blank tiles are blocked.
    Then there must be another pipe below and to the right of pipe $r$ for which the conditions this lemma apply.
    As the BPD has a finite number of pipes eventually there will be a pipe $s$ for which a case other than this case applies.
    Take the pattern achieved in that case and replace the $s$ in the pattern with the $r$ pipe to achieve the same pattern.
\end{proof}

\section{non-reduced implies pattern containment}
\label{s:suff}
We split up the proof that if a BPD contains our configuration then it must contain one of our patterns into three cases based on the relation of two of the pipes that form the configuration.
Specifically we care about the pipes that contain the bottom $\inlineare$ and the flat above that tile of the configuration, we call these pipes the {\em witness pipes}, labeled as $p$ and $q$ where $p$ enters before $q$.
Note that whatever pipe contains the bottom $\inlineare$ must necessarily also contain the bottom flat of the configuration, but the same is not true with the bottom $\inlinejay$ and top flat.
If the $p$ and $q$ pipes do not cross before the configuration and the $q$ pipe does not droop after the configuration we will show that the permutation has a $1423$ or $13254$ pattern.
If the $p$ and $q$ pipes do not cross before the configuration and the $q$ pipe droops after the configuration we will show that the permutation has a $1423$, $12543$, $13254$, $215643$, or $216543$ pattern.
If the $p$ and $q$ pipes cross before the configuration we will show that the permutation has a $1423$, $12543$, $13254$, $25143$, $216543$, or $241653$ pattern.
\[
\begin{tikzpicture}[scale=0.5]
    \draw[thick] (0,0) -- (0,3) -- (3,3) -- (3,4);
    \node at (0,-0.5) {$p$};
    \node at (2,-0.5) {$q$};
    \draw[red,thick] (2,3) circle (0.4);
    \draw[red,thick] (3,3) circle (0.4);
\draw[red,thick] (2,2) circle (0.4);
\draw[red,thick] (3,2) circle (0.4);

    \draw[thick,dashed] (3,4) -- (3,5);
    \draw[thick] (2,0) -- (2,2) -- (4,2);
    \node at (2,-2) {1423};
    \node at (2,-3) {13254};
\end{tikzpicture} \quad \quad \quad
\begin{tikzpicture}[scale=0.5]
    \draw[thick] (0,0) -- (0,3) -- (2,3) -- (2,4);
    \node at (0,-0.5) {$p$};
    \node at (1,-0.5) {$q$};
        \node at (1.5,-2) {1423, 12543, 13254};
    \node at (1.5,-3) {215643, 216543};
    \draw[red,thick] (1,3) circle (0.4);
    \draw[red,thick] (2,3) circle (0.4);
\draw[red,thick] (1,2) circle (0.4);
\draw[red,thick] (2,2) circle (0.4);
    \draw[thick,dashed] (2,4) -- (2,5);
    \draw[thick] (1,0) -- (1,2) -- (3,2) -- (3,3);
    \draw[thick,dashed] (3,3) -- (3,4);
\end{tikzpicture} \quad  \quad \quad
\begin{tikzpicture}[scale=0.5]
    \draw[thick] (0,1) -- (0,2) -- (2,2) -- (2,3) -- (4,3);
    \draw[thick] (1,1) -- (1,4) -- (3,4) -- (3,5);
    \node at (0,0.5) {$p$};
    \node at (1,0.5) {$q$};
    \node at (2,-1) {1423, 12543, 13254};
    \node at (2,-2) {25143, 216543, 241653};
    \draw[red,thick] (2,3) circle (0.4);
    \draw[red,thick] (3,3) circle (0.4);
\draw[red,thick] (2,4) circle (0.4);
\draw[red,thick] (3,4) circle (0.4);
    \draw[thick,dashed] (3,5) -- (3,6);
    \draw[thick,dashed] (4,3) -- (5,3);
\end{tikzpicture}
\]

\begin{proposition}\label{p:nodroop}
Suppose $\text{co}(B)$ is non-reduced for some $B \in \text{BPD}(w)$. If there is an instance of the configuration in $B$ where the bottom witness pipe has no $\inlinejay$-elbows after witnessing, then $w$ contains $1423$ or $13254$ as a pattern.
\end{proposition}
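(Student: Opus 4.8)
The plan is to extract the two witness pipes $p$ and $q$ from Lemma~\ref{l:config} and to build the forbidden pattern out of them together with two auxiliary pipes supplied by the $\inlinejay$-elbow that $p$ carries. First I would record the coarse behavior of the witnesses: $p$ enters to the left of $q$, the flat of $p$ sits directly above the bottom $\inlineare$ of $q$, and along its own path $p$ reads as $\inlineare$--flat--$\inlinejay$--$\inlineare$ (it turns right into its horizontal run, passes through the flat, turns up at its $\inlinejay$, and must turn right once more in order to exit). Thus $p$ has a flat and a $\inlinejay$ sandwiched between two $\inlineare$-elbows, so Lemma~\ref{lem:oneFlat} (or Lemma~\ref{lem:twoFlats}, if the run carries several flats) already forces a $1423$, $1432$, or $13254$ pattern with the $1$ coming from $p$. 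Since $1432\notin\Pi$, the entire content of the proposition is to use the hypothesis on $q$ to rule out the stray $1432$ and sharpen the conclusion to $1423$ or $13254$.

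The core step is a direct four-pipe construction that I would use to pin down the exit order. Apply Lemma~\ref{l:jay-two-pipes} to the $\inlinejay$ of $p$ to obtain pipes $s,t$ with $p<s<t$ exiting in the order $p,t,s$; here $t$ enters in the rightmost relevant column and exits in the row of $p$'s $\inlinejay$, while $s$ enters in \emph{the column in which $p$ makes its $\inlinejay$}. The key geometric observation is that this column is exactly the column of $q$'s bottom flat, since the $\inlinejay$ above that flat is $p$'s $\inlinejay$. As $s$ rises in this column it cannot pass through the flat (a flat is purely horizontal), so $s$ must turn right with an $\inlineare$ \emph{below} $q$'s row and therefore exits below $q$; if $s$ itself has a later $\inlinejay$ one recurses through Lemma~\ref{l:jay-two-pipes}, which terminates by finiteness. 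Meanwhile the hypothesis that $q$ has no $\inlinejay$-elbows after witnessing guarantees that $q$ runs straight right out of its witnessing row and exits there. Since $t$ exits in $p$'s $\inlinejay$-row, which lies above $q$'s row, the exit order is $p,t,q,s$; reading entry columns $p<q<s<t$ against this exit order gives precisely $1423$. This is exactly where the hypothesis is used: it keeps $q$ in its witnessing row, strictly between the exits of $t$ and $s$, so the alternative ordering $p,t,s,q$ (which would read $1432$) cannot occur.

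The remaining work, which I expect to be the main obstacle, is the blocked and chained subcases, handled in parallel with the case analysis of Lemmas~\ref{lem:oneFlat} and~\ref{lem:twoFlats}. When the square that $p$ would droop into is obstructed by an intervening pipe, $s$ no longer turns freely below $q$; instead one gains an extra pipe entering and exiting in reversed order below and to the right of the obstruction, and the four-pipe pattern $1423$ is replaced by a five-pipe $13254$ (with $p$ still the $1$, $q$ supplying the $2$, and the blocking pipes supplying the $5$ and $4$). When the $\inlineare$ below $p$'s flat is joined to $q$'s $\inlineare$ by a nontrivial chain of alternating elbows, I would argue along the chain, either relocating the witnesses downward to a shorter instance or absorbing the chain into the blocked analysis; finiteness of the BPD ensures this reduction terminates. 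The delicate points throughout are verifying that no instance forces the non-$\Pi$ patterns $1432$ or $15432$ in isolation, and keeping careful track of where each constructed pipe exits relative to $q$'s fixed witnessing row.
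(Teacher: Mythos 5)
Your four-pipe construction is sound in the setting it is built for (witnesses uncrossed, trivial chains, auxiliary pipe $s$ without later \inlinejay-elbows), and there it matches the spirit of the paper's argument. But the proposition is not restricted to that setting, and the cases you defer are where the actual work lies. The most serious omission: the witness pipes may \emph{cross} before the configuration. The hypothesis here is only that the bottom witness has no \inlinejay-elbows after witnessing; uncrossedness is an extra hypothesis that appears only in Proposition~\ref{p:droop}, and the paper's own proof of this proposition devotes its Case~1 to crossing witnesses. When they cross, your opening frame is simply false: the first-entering pipe $p$ is then the \emph{bottom} witness (it carries the bottom \inlineare\ and flat), while the second-entering pipe $q$ carries the top flat and \inlinejay. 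Transplanting your construction onto the top witness's \inlinejay\ in that situation gives entry order $p<q<s<t$ with exit order $q,t,p,s$, i.e.\ a $2413$ pattern, which contains neither $1423$ nor $13254$. The paper needs a genuinely different argument there (its Case~1 manufactures a pipe $a<q$ exiting above the \inlinejay\ row and lands in $\{1423,13254,14253,15243\}$), and your proposal has nothing in its place.

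Second, your handling of the degenerate case where $s$ has a later \inlinejay-elbow --- ``recurse through Lemma~\ref{l:jay-two-pipes}'' and ``the blocking pipes supply the $5$ and $4$'' --- is too vague to verify, and it is exactly the paper's hardest case (its Case~3). To get $13254$ you must show that $s$'s \inlinejay\ occurs \emph{below} $q$ and that both pipes produced by Lemma~\ref{l:jay-two-pipes} applied to $s$ exit \emph{below} $q$; if instead they exit between $p$ and $q$, the five pipes read $13542$, which contains $1432$ but neither target pattern, so the conclusion genuinely fails without this step. Your sketch never pins down these exits relative to $q$'s row. Three smaller problems: (i) you invoke the internal construction of Lemma~\ref{l:jay-two-pipes} (entry columns of $s,t$ and that $t$ exits in $p$'s \inlinejay-row), not its statement; the statement alone does not guarantee $t$ enters after $s$, and if the pipe exiting that row enters between $q$ and $s$ your four pipes read $1324$. (ii) Your identification of $p$'s \inlinejay-column with $q$'s flat column holds only for trivial chains, and ``relocating the witnesses to a shorter instance'' is unproven --- a nontrivial chain need not contain a smaller instance of the configuration. (iii) The framing via Lemmas~\ref{lem:oneFlat} and~\ref{lem:twoFlats} does not apply to $p$ in general, since $p$ may have \inlinevwire-tiles between its \inlinejay\ and its next \inlineare\ and arbitrarily many flats, which those lemmas do not cover; in any event a disjunctive conclusion about $w$ cannot be ``sharpened'' by hypotheses on $B$ without redoing those lemmas' case analyses.
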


\begin{proof}
Since $\text{co}(B)$ is non-reduced, we know that $B$ has an instance of the configuration by Lemma~\ref{l:config}.
Let $p$ and $q$ be the witness pipes in our configuration, with $p$ chosen minimally and $q$ chosen maximally.
We first claim there must be a pipe $s$ with $q < s$ such that $s$ exits between $p$ and $q$.
To see this, observe that {\em some} pipe must exit out of the row, say $i_p$, where the pipe $p$ makes its $\inlinejay$-elbow of the configuration.
Note that $i_p$ must be strictly between where $p$ and $q$ exit since $q$ does not droop after.
For a pipe $v < q$ to exit out of row $i_p$, it must stay beneath $p$ and above $q$.
But observe the pipe $v$ necessarily makes a $\inlinejay$-elbow, say in row $i_v$, in order to reach row $i_p$.
So any pipe $v < q$ creates another empty row needing a pipe to exit out of.
Our claim then follows, as there must be some pipe $s$ with $q < s$ so that $s$ exits strictly between $p$ and $q$.
For the remainder of the proof, fix $s$ to be minimal among those pipes that we just showed to exist.
We now consider two cases.

\textbf{Case 1:} $p$ and $q$ cross.
As $p$ has a $\inlinejay$-tile by Lemma~\ref{l:jay-two-pipes}, there must be two pipes, say $t$ and $u$, with $p < t < u$ exiting in order $put$.
Then as there is a flat tile in the same row as the $\inlinejay$-tile of $p$ and in the same column as the $\inlineare$-elbow of $q$ we have that there is a pipe $a < q$ that exits above the $\inlinejay$-tile of $p$.
Additional, there must be a pipe that exits in the row of the $\inlinejay$-tile of $p$ that enters after $p$.
If that pipe is $u$, then we have that $a < q < t < u$ form a $1423$ pattern.
Otherwise there must be some other pipe $r$ that enters after $p$ and exits before $a$.
Therefore, $B$ has $13254$, $14253$, or $15243$ as a pattern depending on how $r$ enters in relation to $t$ and $u$.
Notice that the latter two contain $1423$ patterns.

{\bf Case 2:} $p$ and $q$ do not cross and there is a pipe $r$ with $q < r < s$ that stays beneath $q$ and exits after $q$. In this case, we see that pipes $p < q < r < s$ exit in the order $psqr$, giving a $1423$ pattern in $w$.

{\bf Case 3:} $p$ and $q$ do not cross and there is no such $r$ pipe as described above. 
Then each pipe $v$ with $q < v < s$ necessarily exits before $p$ since $s$ was chosen minimally.
We claim that the pipe $s$ must make a $\inlinejay$-elbow.
Suppose the $\inlinejay$-elbow of pipe $p$ in the configuration occurs in column $c_p$. 
Then we know $q < c_p \leq s$. If $c_p = s$ then $s$ must make a $\inlineare$-elbow before crossing $q$ since $q$ must have a $\inlinehwire$-tile in column $c_p$ as witness. 
But $s$ exits before $q$, therefore $s$ must make a $\inlinejay$-elbow at some point, as advertised.
Otherwise, if $c_p < s$, there is some pipe $v$ with $q < v < s$. 
This pipe $v$ must make a $\inlineare$-elbow before crossing $q$ for the same reason. 
But since $v$ must exit before $p$, we have that $v$ makes a $\inlinejay$-elbow at some point, say in column $c_v$.
Note that $c_v \leq s$, because otherwise $s$ would necessarily have a double-crossing with $v$.
So repeating the same argument as we did for $c_p \leq s$, it follows that $s$ must make a $\inlinejay$-elbow.
Further this $\inlinejay$-elbow must occur below $q$ as otherwise there would be an $\inlinejay$-elbow from $s$ above $p$ which is covered by Case 1.
Hence by Lemma~\ref{l:jay-two-pipes}, there must be two pipes, say $t$ and $u$, with $s < t < u$ exiting in the order $sut$.
Further, we have that $u$ and $t$ must exit below $q$.
Overall, we have $p < q < s < t < u$ that exit in the order $psqut$, which is an occurrence of a $13254$ pattern in $w$.
\end{proof}

\begin{proposition}\label{p:cross}
Suppose $\text{co}(B)$ is non-reduced for some $B \in \text{BPD}(w)$. If the witness pipes of a configuration in $B$ cross before the occurrence, then $w$ contains $1423$, $12543$, $13254$, $25143$, $216543$, or $241653$ as a pattern.
\end{proposition}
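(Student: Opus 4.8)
The plan is to argue geometrically from the configuration exactly as in the proof of Proposition~\ref{p:nodroop}, but now exploiting the crossing hypothesis to shift the roles of the pipes in the resulting patterns. By Lemma~\ref{l:config}, $B$ contains the configuration; I fix the witness pipes with $p$ chosen minimally and $q$ maximally, so $p$ enters to the left of $q$. The first step is to record the key consequence of the hypothesis: since $p$ and $q$ have already crossed once and the $\inlineare$-elbow of $q$ lies strictly below the flat of $p$, the pipe $q$ must exit above $p$. Reading $w$ in exit order (top to bottom) with values given by entry rank, this means $q$ appears before $p$ with larger value, so the pair already contributes a ``$2\,1$''-type descent in which $p$ is the smaller entry and $q$ the larger. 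This is the germ of the leading ``$2\,1$'' shared by $25143$, $216543$, and $241653$, and is precisely what distinguishes this case from Proposition~\ref{p:nodroop}, where $p$ could serve as the global minimum.

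Next I would extract the auxiliary pipes forced by the elbows of the configuration. Because $p$ carries a $\inlinejay$-elbow, Lemma~\ref{l:jay-two-pipes} produces pipes $t<u$, both larger than $p$, exiting in swapped order (the larger $u$ above the smaller $t$), and these supply upper values of the target pattern. The more delicate input comes from $q$: after the crossing, its $\inlineare$-elbow and bottom flat, together with the chain of alternating elbows guaranteed by Lemma~\ref{l:config}, force $q$ either to terminate directly or to make further $\inlinejay$-elbows of its own. When $q$ makes such a $\inlinejay$-elbow I would invoke Lemma~\ref{l:jay-two-pipes} again to obtain another swapped pair, and when the relevant droops of $q$ are obstructed I would feed the sandwiched-flat situation into Lemmas~\ref{lem:oneFlat}, \ref{lem:twoFlats}, and \ref{lem:doubleFlat}, which convert blocked blank tiles in the Rothe diagram into concrete patterns (and, in the fully blocked sub-case, let me pass to a lower-right pipe $s$ and inherit its pattern after replacing $s$ by $r$).

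The core of the argument is then a case analysis on the exit order of $q$, $p$, $t$, $u$, and any pipes produced by the $\inlinejay$-elbows of $q$, organized by how far $q$ ``returns'' after crossing $p$ (that is, by whether $q$ droops again afterward). As in Proposition~\ref{p:nodroop}, I would fix a minimal pipe $s$ exiting strictly between $q$ and $p$ and track how $s$, $t$, and $u$ interleave. The branches in which $q$ creates no new descent collapse to the shorter witnesses $1423$, $13254$, or $12543$, while the branches in which $q$ both crosses $p$ and subsequently droops produce $25143$, $216543$, or $241653$, the specific pattern being governed by whether one or two additional inversions are created and by the placement of $t,u$ relative to the pipes exiting between $q$ and $p$.

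I expect the main obstacle to be exactly this bookkeeping: verifying that the cases are exhaustive and that each genuinely closes to one of the six listed patterns rather than to some longer spurious inversion. The subtle point is the interaction between the crossing of the witness pipes and the chain of alternating elbows, which (as Figure~\ref{fig:double_config} illustrates) can propagate the double crossing through several intermediate pipes; one must ensure that following this chain does not force drooping of $q$ that escapes the six-pattern list. Here the reductions furnished by Lemmas~\ref{lem:oneFlat}--\ref{lem:doubleFlat} are what keep the analysis finite, since they bound each blocked configuration by a pattern attached to $r$ and prevent the case tree from branching indefinitely.
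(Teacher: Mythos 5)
Your proposal stops short of being a proof: the entire content of this proposition \emph{is} the case analysis showing that a crossing configuration forces one of the six listed patterns, and you explicitly defer that step, naming the verification that "the cases are exhaustive and that each genuinely closes to one of the six listed patterns" as an expected obstacle rather than carrying it out. The paper's proof consists precisely of this bookkeeping: it splits according to where the pipes that exit between $q$ and $p$ enter the grid --- all such pipes enter before or within the configuration's columns; the pipe entering directly beneath the $\inlineare$-elbow of $p$ exits between them; the pipe entering directly beneath the $\inlinejay$-elbow of $q$ exits between them; or some pipe entering after that $\inlinejay$-elbow exits between them --- and in each case constructs the pattern explicitly (undrooping plus Lemma~\ref{lem:twoFlats} applied to $p$ yields $1423$, $13254$, or $216543$; Lemma~\ref{lem:oneFlat} applied to an auxiliary pipe $t$ yields $12543$; direct constructions yield $241653$ and $25143$). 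None of this is executed in your outline, so the statement is not established.

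Beyond the missing core, three concrete points in your sketch are wrong or would fail. First, you have the roles of the witness pipes reversed: when the witness pipes cross before the configuration, it is $p$ (the earlier-entering pipe) that carries the bottom $\inlineare$-elbow and bottom flat, while $q$ carries the top flat and $\inlinejay$-elbow; consequently your plan to feed "the relevant droops of $q$" into Lemmas~\ref{lem:oneFlat}--\ref{lem:doubleFlat} aims those lemmas at the wrong pipe (the paper applies Lemma~\ref{lem:twoFlats} to $p$ here, and applies Lemma~\ref{lem:doubleFlat} to $q$ only in Proposition~\ref{p:droop}, the non-crossing case). Second, fixing "a minimal pipe $s$ exiting strictly between $q$ and $p$ as in Proposition~\ref{p:nodroop}" is unjustified: the existence of such a pipe in that proposition used the hypothesis that the bottom witness pipe never droops after witnessing, and in the crossing case there may be \emph{no} pipe exiting strictly between $q$ and $p$ at all --- exactly the situation the paper's first case must, and does, handle separately. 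Third, the pipes $t<u$ you extract from the $\inlinejay$-elbow of $p$ via Lemma~\ref{l:jay-two-pipes}, combined with the fact that $q$ exits above $p$, only produce a $2143$-type occurrence (or $3142$/$4132$, depending on where $q$ enters relative to $t$ and $u$), none of which lies in $\Pi$ or contains a pattern of $\Pi$; so the auxiliary pipes you propose as "upper values of the target pattern" are by themselves insufficient, and the additional structure needed to upgrade them to one of the six listed patterns is exactly what your outline leaves unexamined.
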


\begin{proof}
    Let $p$ and $q$ be the witness pipes of a configuration in $B$ where $p$ and $q$ cross before creating the configuration.
    We may assume $B$ is reduced meaning that $p$ must exit after $q$.
    Note that as $p$ and $q$ cross before the $\inlineare$-elbow $p$ contributes to the configuration $p$ must have a $\inlinejay$-elbow in the same column.\\

    \textbf{Case 1:} Assume that any pipes that exit between $q$ and $p$ entered before the $\inlineare$-elbow $p$ contributes to the configuration or between $\inlineare$-elbow and $\inlinejay$-elbow of the configuration.
    This requires $q$ to have a $\inlinejay$-tile after the flat in the configuration as the $p$ must exit in the row that contains either $q$'s last $\inlinejay$-tile or the $\inlinejay$-tile of a pipe exiting between $p$ and $q$.
    By then undrooping each pipe that is both above and left the $p$ and $q$ pipes we can then undroop the $\inlinejay$-tile of $p$ that is in the configuration.
    As a result $p$ will have two flat tiles prior to the $\inlinejay$-tile after the flat from the configuration meaning that we can apply Lemma~\ref{lem:twoFlats} to $p$.
    Adding the $q$ pipe to the $15432$ pattern we get that $B$ has a $1423$, $13254$, or $216543$ pattern.

    \textbf{Case 2:} Assume that the pipe $t$ which enters directly beneath the $\inlineare$-elbow $p$ contributes to the configuration exits between $p$ and $q$.
    Note that $t$ must have a $\inlineare$-elbow in the column of $p$'s $\inlinejay$-elbow in order to exit above that $\inlinejay$-elbow.
    Further, $t$ must have a $\inlinejay$-elbow after the flat $p$ contributes to the configuration, otherwise $t$ would form the configuration with $q$ (or could be undrooped to form the configuration) which is covered by Propositions~\ref{p:nodroop} and \ref{p:droop}.
    Either there is a flat in $t$ before this $\inlinejay$-elbow or $t$ has an earlier $\inlinejay$-elbow.
    In the latter case, by undrooping any pipes above and left of $p$ we can then undroop the $\inlinejay$-elbow $p$ contributes to the configuration.
    We can then undroop the earlier $\inlinejay$-elbow of $t$ to create a flat before the other $\inlinejay$-elbow.
    Therefore, we can apply Lemma~\ref{lem:oneFlat} to $t$.
    If $t$ take part in $1423$ or $13254$ pattern we are done, so assume $t$ forms a $1432$ pattern.
    By appending $q$ to the front of this pattern we get $B$ has a $12543$ pattern.

    \textbf{Case 3:} Assume that the pipe $t$ which enters directly beneath the $\inlinejay$-elbow $q$ contributes to the configuration exits between $p$ and $q$.
    Note that this means $t$ must be drooped, further the $\inlinejay$-tile created must be right of the $\inlinejay$ in the configuration.
    In order for $t$ to be drooped we must have had two pipes in reverse order below and after $t$.
    Further, there must always be a pipe that enters after $q$ and before $t$ that exits below $p$.
    To see this consider the pipe $s$ that exits directly below the $\inlineare$-elbow $p$ contributes to the configuration, by assumption this pipe either exits below $p$ or above $q$.
    In the former case we are done.
    In the latter case note that $s$ must have a $\inlinejay$-elbow left of the column $t$ enters, otherwise $s$ would double cross $t$.
    Repeat this argument for the pipe that enters in the column of that $\inlinejay$-elbow.
    Therefore, we have that the BPD has a $241365$, $241635$, or $241653$ pattern, with the former two containing a $1423$ pattern.

    \textbf{Case 4:} Assume that there is a pipe $t$ that enters after the $\inlinejay$-elbow $q$ contributes to the configuration and exits between $p$ and $q$.
    Note that there must always be two pipes that enter afer $q$ and before $t$ that exit below $q$.
    To see this consider the pipe $s$ that exits directly below the $\inlineare$-elbow $p$ contributes to the configuration, by assumption this pipe either exits below $p$ or above $q$.
    In the former case we are done.
    In the latter case note that $s$ must have a $\inlinejay$-elbow left of the column $t$ enters, otherwise $s$ would double cross $t$.
    Repeat this argument for the pipe that enters in the column of that $\inlinejay$-elbow.
    Further we can repeat this exact same argument for the pipe that exits directly below the $\inlinejay$-elbow $q$ contributes to the configuration, to get another pipe exiting below $q$ that enters in a different column from the other pipe exiting below $q$.
    Therefore, we have that the BPD has a $25143$ or $25134$ pattern.
\end{proof}

\newpage

\begin{proposition}\label{p:droop}
Suppose $\text{co}(B)$ is non-reduced for some $B \in \text{BPD}(w)$. If the witness pipes of a configuration in $B$ do not cross before witnessing and the bottom witness pipe makes a $\inlinejay$-elbow at some point after witnessing, then $w$ contains $1423$, $12543$, $13254$, $215643$, or $216543$ as a pattern.
\end{proposition}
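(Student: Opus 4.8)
The plan is to exploit the fact that, in this case, the bottom witness pipe $q$ necessarily carries a rich ``sandwiched'' structure, so that the flat-counting Lemmas~\ref{lem:oneFlat} and~\ref{lem:doubleFlat} apply directly to $q$ and can then be upgraded by adjoining the top witness pipe $p$. Fix a configuration via Lemma~\ref{l:config} with witness pipes $p<q$ that do not cross before witnessing, where $q$ droops afterward. Then $q$ enters the configuration with an $\inlineare$-elbow and the bottom flat, contributes a $\inlinejay$-elbow when it droops, and finally must turn with another $\inlineare$-elbow in order to exit to the right; hence $q$ always has a $\inlinehwire$-tile and a $\inlinejay$-elbow sandwiched between two $\inlineare$-elbows. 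I would split on whether $p$ and $q$ cross \emph{after} the configuration, equivalently whether $p$ exits above or below $q$ (since $B$ is reduced and $p<q$, not crossing is the same as $p$ exiting above $q$).

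First I would treat the case where $p$ and $q$ never cross, so $p$ exits above $q$. Applying Lemma~\ref{lem:oneFlat} to $q$ yields a $1423$, $1432$, or $13254$ pattern in which $q$ plays the role of the $1$. The patterns $1423$ and $13254$ already lie in $\Pi$, so it remains to handle $1432$. Here the three non-$q$ pipes all exceed $q$ and exit after it, while $p<q$ exits before $q$ and hence before all four pipes of the pattern; prepending $p$ as a new global minimum promotes $1432$ to $12543$, as desired.

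The substantial case is when $p$ and $q$ cross, which happens exactly once and after the configuration, with $q$ overtaking $p$ so that $q$ exits above $p$. The key observation I would make is that, in order to rise past $p$ and exit above it, $q$ must travel straight up for at least one tile between its drooping $\inlinejay$-elbow and its exiting $\inlineare$-elbow, so $q$ additionally carries a $\inlinevwire$-tile in that stretch. Thus $q$ now has an $\inlinehwire$-tile, a $\inlinejay$-elbow, \emph{and} a $\inlinevwire$-tile sandwiched between two $\inlineare$-elbows, and Lemma~\ref{lem:doubleFlat} applies to $q$, producing a $1423$, $13254$, $14532$, or $15432$ pattern with $q$ as the $1$. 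Again $1423$ and $13254$ are immediate, and I would insert $p$ directly after $q$ (so that $q,p$ become the leading $2,1$): this sends $14532\mapsto 215643$ and $15432\mapsto 216543$, both in $\Pi$.

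The step I expect to be the main obstacle is this final placement: I must show that $p$ exits \emph{immediately} below $q$, i.e.\ before the entire descending run of pipes furnished by $q$'s droop. This is essential, since inserting $p$ at a later position could produce a permutation outside the list claimed by this proposition (for instance the $\Pi$-avoiding $21543$, or a $25143$ belonging to a different case). I would establish the placement by the cascade argument used in Proposition~\ref{p:nodroop}: any pipe forced to exit strictly between $q$ and $p$ must itself make a $\inlinejay$-elbow in order to thread between them, and tracking the columns of these $\inlinejay$-elbows through Lemma~\ref{l:jay-two-pipes} shows such a pipe would already belong to the descending run produced by $q$, so that $p$ indeed slots in directly beneath $q$.
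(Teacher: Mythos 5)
Your overall architecture matches the paper's proof of this proposition almost exactly: the same case split on whether $p$ and $q$ cross after witnessing, Lemma~\ref{lem:oneFlat} applied to $q$ with $p$ prepended to upgrade $1432$ to $12543$ in the non-crossing case, and Lemma~\ref{lem:doubleFlat} applied to $q$ with $p$ inserted in the second position to upgrade $14532$ and $15432$ to $215643$ and $216543$ in the crossing case. The genuine gap is in your crossing case: the inference that because $q$ ``must travel straight up for at least one tile'' it therefore ``carries a $\inlinevwire$-tile in that stretch'' does not follow. A pipe traveling vertically passes through either $\inlinevwire$-tiles or $\inlinecross$-tiles, and these are not interchangeable here. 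It is entirely possible that every tile of $q$'s vertical run between its $\inlinejay$-elbow and its next $\inlineare$-elbow is a crossing --- in particular, the crossing with $p$ itself, together with other pipes exiting horizontally through that column, can occupy the whole run (for instance $\inlinejay$, then $\inlinecross$, then $\inlineare$, with no $\inlinevwire$ at all). In that situation $q$ does not satisfy the hypotheses of Lemma~\ref{lem:doubleFlat}, and the requirement there is not cosmetic: the proof of that lemma converts the sandwiched flat/jay/vertical structure into a $2\times2$ square of blank tiles in the Rothe BPD, and a horizontal pipe occupying the would-be $\inlinevwire$ position destroys that square. The paper's proof splits on exactly this point: when $q$ has no such $\inlinevwire$-tile it argues that $q$ must have a further $\inlineare$-tile in order to cross $p$, undroops every pipe that enters and exits before $p$, and then undroops that $\inlineare$-tile so as to manufacture a genuine $\inlinevwire$-tile, and only then invokes Lemma~\ref{lem:doubleFlat}. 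Your proposal omits this subcase entirely, so as written it proves the result only when the crossing with $p$ does not exhaust $q$'s vertical rise.

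A smaller remark: the step you flag as the main obstacle --- that $p$ exits immediately below $q$ among the pattern pipes, so that it can be inserted in the second position --- is simply asserted in the paper (``we can add $p$ as the second pipe in the pattern''), so you are not behind the paper's level of rigor there. But note that your sketched cascade argument addresses the wrong quantifier: what must be shown is not that an arbitrary pipe exiting between $q$ and $p$ joins $q$'s descending run, but that the specific boundary pipes produced by the proof of Lemma~\ref{lem:doubleFlat} all exit below $p$; as your own example of $25143$ shows, getting this placement wrong lands you outside the pattern list claimed by this proposition.
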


\begin{proof}
    Since $\text{co}(B)$ is non-reduced, we know that $B$ has an instance of the configuration by Lemma~\ref{l:config}.
    Let $p$ and $q$ be the witness pipes in our configuration, with $p$ chosen minimally and $q$ chosen maximally.\\

    \textbf{Case 1:} Suppose pipes $p$ and $q$ never cross in $B$.
    Note that as $q$ has a flat from the configuration and then has an $\inlineare$-tile, from Lemma~\ref{lem:oneFlat} that $B$ contains a $1423$, $1432$, or $13254$ pattern with $q$ as the $1$ pipe.
    If $B$ has a $1423$ or $13254$ pattern we are done, so suppose $B$ has a $1432$ pattern.
    As $q$ is the $1$ pipe in the $1432$ pattern of $B$ and $p$ enters and exits before $q$ we have that $B$ has a $12543$ pattern.

    \textbf{Case 2:} Suppose pipes $p$ and $q$ cross in $B$, by assumption they must cross after they form the configuration.
    Assume that $q$ doesn't have a $\inlinevwire$-tile between the $\inlinejay$-tile that occurs after the flat in the configuration and the next $\inlineare$-tile of $q$.
    In this case $q$ must have another $\inlineare$-tile in order to cross $p$, this means that we if we undroop every pipe that enters and exits before $p$ then we can undroop that $\inlineare$-tile to create a $\inlinevwire$ as described earlier.
    Therefore, $B$ satisfies the conditions of Lemma~\ref{lem:doubleFlat} with pipe $q$.
    Along with the fact that in all those patterns we can add $p$ as the second pipe in the pattern, we get that $B$ has a $1423$, $13254$, $215643$, or $216543$ pattern.
\end{proof}

\begin{theorem}\label{thm:avoidImpliesReduced}
    If $w$ avoids all seven patterns in $\Pi$, then co-BPD$(w)$ has only reduced co-BPDs.
\end{theorem}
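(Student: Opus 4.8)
The plan is to prove the contrapositive: assuming that some co-BPD of $w$ is non-reduced, I would locate an occurrence of one of the seven patterns of $\Pi$ in $w$. First I would unwind the definitions. A non-reduced co-BPD of $w$ is of the form $\co{B}$ for some $B \in \BPD{w}$ with $\co{B}$ non-reduced, so Lemma~\ref{l:config} applies and guarantees that $B$ contains an instance of the configuration. Fixing one such instance, it comes equipped with a pair of witness pipes $p$ and $q$, where $p$ enters before $q$ and $q$ is the pipe carrying the bottom $\inlineare$-elbow (and hence the bottom flat).

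Next I would split into three cases governed entirely by the behavior of $p$ and $q$, chosen so that their hypotheses line up verbatim with the three preceding propositions. In case (i), $p$ and $q$ cross before the configuration occurs; here Proposition~\ref{p:cross} produces one of $1423$, $12543$, $13254$, $25143$, $216543$, or $241653$. In case (ii), $p$ and $q$ do not cross before, but $q$ makes a $\inlinejay$-elbow at some point after witnessing; here Proposition~\ref{p:droop} produces one of $1423$, $12543$, $13254$, $215643$, or $216543$. In case (iii), $q$ makes no $\inlinejay$-elbow after witnessing; here Proposition~\ref{p:nodroop} produces $1423$ or $13254$. I would then observe that every pattern appearing across these three lists lies in $\Pi$, so in all cases $w$ contains a pattern in $\Pi$, which is exactly the contrapositive of the claimed statement.

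Since essentially all of the combinatorial content has already been carried by Lemma~\ref{l:config} and by Propositions~\ref{p:nodroop}, \ref{p:cross}, and~\ref{p:droop}, the theorem itself is an assembly rather than a fresh argument, and I do not expect a serious technical obstacle. The one point deserving genuine care is checking that the three cases are truly exhaustive and mutually exclusive and that each is aligned with the correct proposition: the dichotomy ``cross before / do not cross before'' refines, in the non-crossing branch, into ``$q$ makes a jay-elbow after / $q$ does not,'' and these together partition all possibilities. I would in particular note that the existential hypothesis of Proposition~\ref{p:nodroop}---that \emph{there exists} an instance of the configuration whose bottom witness pipe has no later $\inlinejay$-elbows---is satisfied by the very instance we fixed in case (iii), so no separate search for such an instance is needed. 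A final bookkeeping check confirms that the union of the three pattern lists is precisely $\Pi$, completing the proof.
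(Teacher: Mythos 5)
Your proposal is correct and matches the paper's proof exactly: the paper likewise proves the contrapositive by invoking Lemma~\ref{l:config} and then dispatching the three cases to Propositions~\ref{p:nodroop}, \ref{p:cross}, and~\ref{p:droop}, whose pattern lists union to $\Pi$. Your only slight overstatement is calling the cases mutually exclusive (a configuration whose pipes cross before witnessing and whose bottom pipe has no later $\inlinejay$-elbow falls under both Proposition~\ref{p:nodroop} and Proposition~\ref{p:cross}), but only exhaustiveness is needed, so this is harmless.
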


\begin{proof}
    By Propositions~\ref{p:nodroop}, \ref{p:cross}, or \ref{p:droop}, the contrapositive the theorem statement follows.
\end{proof}

\section{pattern implies non-reduced}\label{s:ness}

\begin{proposition}\label{prop:patt}
    If $\pi \in \Pi$, then there exists $B\in \BPD{\pi}$ such that co$(B)$ is non-reduced.
\end{proposition}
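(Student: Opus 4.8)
The plan is to invoke Lemma~\ref{l:config}, which turns the claim into a purely combinatorial existence statement: for each $\pi \in \Pi$ it suffices to produce a single $B \in \BPD{\pi}$ containing an instance of the configuration, since then $\co{B}$ is automatically non-reduced. The proposition thereby reduces to exhibiting seven explicit witnesses, one per pattern. Note that $\BPD{\pi}$ is always nonempty, as it contains the Rothe BPD of $\pi$, so there is a natural place to start building each witness.

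To construct each witness I would begin from the Rothe BPD of $\pi$ and apply a short sequence of droop moves designed to manufacture the local data demanded by Lemma~\ref{l:config}: a pipe $p$ carrying a $\inlinehwire$-tile with an $\inlineare$-elbow immediately below it, together with a pipe $q$ carrying an $\inlineare$-elbow whose $\inlinehwire$-tile lies to its right and has a $\inlinejay$-elbow above it, arranged so that the two $\inlineare$-elbows and the two $\inlinejay$-elbows are joined by the alternating-elbow chains the lemma requires. For the shortest patterns the minimal configuration, in which the two $\inlineare$-elbows coincide and likewise the two $\inlinejay$-elbows, can be realized after a single droop; indeed for $1423$ the introduction already exhibits such a $B$, whose co-BPD traces out $3412$ and is visibly non-reduced.

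For each of the seven patterns I would then display the resulting $B$ alongside $\co{B}$ and verify two facts directly from the diagram: first, that tracing the pipes of $B$ recovers the associated permutation $\pi$; and second, that the relevant $\inlineare$-, $\inlinehwire$-, and $\inlinejay$-tiles constitute an instance of the configuration, so that Lemma~\ref{l:config} forces $\co{B}$ to be non-reduced (equivalently, one reads off a pair of pipes crossing twice in $\co{B}$). Since these are all small grids, the verification is routine pipe-tracing once the pictures are drawn correctly.

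The main obstacle is the longer patterns $215643$, $216543$, and $241653$ (and, to a lesser extent, $12543$ and $25143$): for these the minimal single-elbow configuration need not suffice, and the witness may require a genuine alternating-elbow chain, making the droop sequence from the Rothe BPD more delicate. The real content of the proof lies in finding the correct BPD for each such pattern and confirming that no spurious double crossing perturbs the associated permutation away from $\pi$; once the seven diagrams are in hand, the remaining checks are immediate.
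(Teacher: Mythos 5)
Your proposal matches the paper's proof: the paper likewise establishes the proposition by exhibiting, for each of the seven patterns, an explicit sequence of droop moves on the Rothe BPD (tabulated with pictures in its appendix) that produces an instance of the configuration of Lemma~\ref{l:config}, whence the co-BPD is non-reduced. The only cosmetic difference is that the paper verifies the double crossing directly in the drawn co-BPDs rather than emphasizing the alternating-elbow chains, but the construction and verification are the same as what you outline.
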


\begin{proof}
In Appendix~\ref{a:droops} there is a table along with pictures giving which droop moves to perform to get a non-reduced co-BPD for each permutation in the proposition statement.
\end{proof}

Let $D$ be a series of droop moves for the Rothe BPD of $\pi$.
Given a permutation $w$ with the pattern $\pi$ we can attempt the droop moves $D$ of $\pi$ by ignoring the pipes of $w$.
That is changing $(a,b)\ssearrow (c,d)$ to $(a',b')\ssearrow(c',d')$ where $a'$ (resp. $c'$) is $a$ (resp. $c$) plus the number of pipes not in $\pi$ that exit above pipe $a/c$ in $\pi$, $b'/d'$ is $b/d$ plus the number of pipes not in $\pi$ that exit left of pipe $b/d$ in $\pi$.
This new series of droop moves $D'$ is valid unless there is a pipe that enters and exits between the elbow and the blank tile of a desired droop move, which recall is a blocking pipe.
For this section, we say a blocking pipe is of the form $(x,y)$ where $x$ is the number of pipes in the original pattern above the blocking pipe and $y$ is the number of pipes in the original pattern to the left.
Note that for a droop move in $D$, $(a,b)\ssearrow (c,d)$, a blocking pipe for this droop move can be of the form $(x,y)$ for any $a\leq x < c$ and $b\leq y < d$.

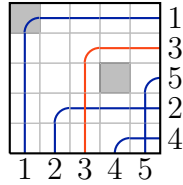
\begin{figure}[h!]
\begin{tikzpicture}[scale =.4]
\node at (0,4) {\shade};
\node at (0,4) {\oldare};
\node at (1,4) {\oldhwire};
\node at (2,4) {\oldhwire};
\node at (3,4) {\oldhwire};
\node at (4,4) {\oldhwire};
\node at (0,3) {\oldvwire};
\node at (1,3) {\nowire};
\node at (2,3) {\oldare[UF2]};
\node at (3,3) {\oldhwire[UF2]};
\node at (4,3) {\oldhwire[UF2]};
\node at (0,2) {\oldvwire};
\node at (1,2) {\nowire};
\node at (2,2) {\oldvwire[UF2]};
\node at (3,2) {\shade};
\node at (3,2) {\nowire};
\node at (4,2) {\oldare};
\node at (0,1) {\oldvwire};
\node at (1,1) {\oldare};
\node at (2,1) {\oldhwire};
\node at (2,1) {\oldvwire[UF2]};
\node at (3,1) {\oldhwire};
\node at (4,1) {\cross};
\node at (0,0) {\oldvwire};
\node at (1,0) {\oldvwire};
\node at (2,0) {\oldvwire[UF2]};
\node at (3,0) {\oldare};
\node at (4,0) {\cross};

\draw[thick] (-0.5,-0.5) -- (-0.5,4.5) -- (4.5,4.5) -- (4.5,-0.5) -- (-0.5,-0.5);

\node at (0,-1) {1};
\node at (1,-1) {2};
\node at (2,-1) {3};
\node at (3,-1) {4};
\node at (4,-1) {5};

\node at (5,4) {1};
\node at (5,3) {3};
\node at (5,2) {5};
\node at (5,1) {2};
\node at (5,0) {4};

\end{tikzpicture}
\caption{The $3$ pipe is a $(1,2)$ blocking pipe for the $1423$ pattern's droop move of $(1,1)\ssearrow(2,3)$.}
\end{figure}

\newpage

\begin{lemma}\label{lem:blocking}
    If $w$ contains a pattern $\pi \in \Pi$ and contains no blocking pipes for the corresponding pattern's droop moves to get a non-reduced co-BPD, then there exists $B\in \BPD{w}$ such that co$(B)$ is non-reduced.
\end{lemma}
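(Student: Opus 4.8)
The plan is to take the explicit droop sequence that witnesses non-reducedness for the pattern $\pi$ itself and ``replay'' it inside $w$. By Proposition~\ref{prop:patt} (via the Appendix) there is a sequence of droop moves $D$ carrying the Rothe BPD of $\pi$ to a BPD $B_\pi \in \BPD{\pi}$ with $\co{B_\pi}$ non-reduced. Using the reindexing described above, I would transfer $D$ to the sequence $D'$ of moves on the Rothe BPD $R_w$ of $w$, where each rectangle is enlarged to account for the pipes of $w$ not belonging to the chosen copy of $\pi$. I would then perform $D'$ on $R_w$ one move at a time and argue that (i) every move of $D'$ is a legal droop, and (ii) the resulting $B\in\BPD{w}$ has $\co{B}$ non-reduced.

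For (i), I would proceed by induction on the moves of $D'$, maintaining the invariant that after performing the first $i$ moves, the $\pi$-pipes of the current diagram reproduce (under the coordinate shift) exactly the $i$-th diagram in the $\pi$-computation, while every pipe of $w$ outside the chosen copy of $\pi$ runs straight through each droop rectangle as a string of $\inlinevwire$-, $\inlinehwire$-, and $\inlinecross$-tiles (a pipe with no elbow inside a region is a straight wire). The corners of the rectangle for the $(i{+}1)$-st move are occupied by $\pi$-pipes exactly as in the $\pi$-computation, namely an $\inlineare$-tile in the top-left and a $\inlinenowire$-tile in the bottom-right; and because $w$ has no blocking pipe for this move, no outside pipe contributes an elbow inside the rectangle. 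Hence the rectangle is an active region rather than a blocked one, the droop is legal, and performing it only slides the outside pipes through without introducing elbows, so the invariant persists.

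For (ii), since $\co{B_\pi}$ is non-reduced, Lemma~\ref{l:config} guarantees that $B_\pi$ contains the configuration, i.e.\ a pair of flats together with the alternating chains of $\inlineare$- and $\inlinejay$-elbows satisfying the stated ``closest elbow'' conditions. All of these elbows and flats lie on $\pi$-pipes, so they appear verbatim among the $\pi$-pipes of $B$. Inserting the outside pipes only adds $\inlinevwire$-, $\inlinehwire$-, and $\inlinecross$-tiles and never an elbow, so it cannot interpose a new elbow between two consecutive elbows of a chain; consequently every ``closest elbow'' relation survives and $B$ still contains the configuration. Applying Lemma~\ref{l:config} once more yields that $\co{B}$ is non-reduced, as desired.

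The step I expect to be the main obstacle is the validity claim (i), and more precisely the bookkeeping guaranteeing the invariant is preserved: I must ensure that earlier droops never push an outside pipe into a later rectangle in a way that creates a blocking elbow, and that the enlarged rectangles in $w$ really are the shifts of the $\pi$-rectangles with corners still carried by the correct $\pi$-pipes. This is exactly where the hypothesis ``no blocking pipes'' does the work, since the blocking condition $a\le x<c$, $b\le y<d$ is a static constraint on the relative exit positions of the outside pipes and therefore rules out interference at every stage simultaneously, rather than having to be rechecked after each droop.
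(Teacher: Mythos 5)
Your proposal is correct and follows essentially the same route as the paper's proof: replay the pattern's droop sequence on the Rothe BPD of $w$ with rows and columns shifted to account for the extra pipes, use the absence of blocking pipes to conclude every transferred droop is legal, and note that the outside pipes contribute no elbows so the configuration of Lemma~\ref{l:config} survives, making $\co{B}$ non-reduced. The paper states this in three sentences; your version merely makes the induction and the two applications of Lemma~\ref{l:config} explicit.
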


\begin{proof}
    Let $\pi$ be the pattern $w$ contains.
    Then perform the series of droop moves to get a non-reduced co-BPD for the pattern adjusted for the extra pipes by increasing each row by the number of pipes in $w$ not a part of $\pi$ that exit above the drooped pipe and increasing each column by the number of pipes in $w$ not part of $\pi$ that enter left of the droop pipe.
    As there is no blocking pipe this series of droop moves is valid, further the configuration is preserved as the only difference is potentially the length of the droop move which changes nothing about the tiles involved in the configuration.
\end{proof}

\begin{proposition}\label{p:1423}
    If $w$ contains a pattern $\pi=1423$, then there exists $B\in \BPD{w}$ such that co$(B)$ is non-reduced.
\end{proposition}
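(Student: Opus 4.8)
The plan is to make everything rest on Lemma~\ref{lem:blocking}: its only hypothesis is the absence of blocking pipes, so the whole task is to eliminate blocking pipes for the $1423$ droop $(1,1)\ssearrow(2,3)$ recorded for $1423$ in Proposition~\ref{prop:patt}. First I would fix an occurrence of $1423$ in $w$ and attempt the adjusted droop; if no pipe of $w$ blocks it, Lemma~\ref{lem:blocking} immediately yields a $B \in \BPD{w}$ with $\co{B}$ non-reduced. Since the move is $(1,1)\ssearrow(2,3)$, the inequalities $1 \le x < 2$ and $1 \le y < 3$ defining a blocking pipe force every blocking pipe to be of type $(1,1)$ or $(1,2)$, and I would induct on the finite number of blocking pipes.

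For the inductive step I fix a blocking pipe $b$ and split according to whether $b$ completes to a new pattern. In the first case, $b$ together with three of the four pattern pipes forms a fresh occurrence of $1423$ in which $b$ is the leading $1$; because $b$ enters strictly to the right of, and exits strictly below, the original leading $1$, this occurrence has a strictly smaller droop rectangle and hence strictly fewer blocking pipes, so the induction hypothesis applies. This is the typical behaviour of a $(1,1)$ blocking pipe: in $12534$, for instance, the value-$2$ pipe blocks the occurrence $1534$, but switching to the occurrence $2534$ (promoting the value-$2$ pipe to the leading $1$) removes the obstruction.

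In the second case $b$ lies in no occurrence of $1423$, which is the typical behaviour of a $(1,2)$ blocking pipe; for example, the unique $1423$ in $13524$ is blocked by the value-$3$ pipe, and that pipe belongs to no occurrence of $1423$. Here I would instead droop $b$ itself. The geometric input is that the reverse-order pattern pipes lying below and to the right of $b$ guarantee a $\inlinenowire$-tile below and to the right of the $\inlineare$-elbow of $b$, so $b$ admits a droop carrying its turning tiles out of the rectangle; this keeps us in $\BPD{w}$ and strictly decreases the number of pipes obstructing $(1,1)\ssearrow(2,3)$. After finitely many such auxiliary droops the main droop becomes unobstructed, and performing it leaves behind the intermediate $\inlineare$- and $\inlinejay$-elbows of the drooped pipes, which form exactly the chain of alternating elbows permitted in Lemma~\ref{l:config}. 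Hence the final $B$ still contains the configuration and $\co{B}$ is non-reduced; concretely, for $13524$ one droops the value-$3$ pipe and then the leading $1$, and the resulting co-BPD double-crosses.

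The hard part will be this second case: showing that a blocking pipe which lies in no $1423$ can always be drooped, i.e.\ that the promised $\inlinenowire$-tile below and to the right of its elbow genuinely exists, and that performing the main droop afterwards produces a bona fide instance of the (possibly chained) configuration of Lemma~\ref{l:config} rather than disturbing it. Termination is routine once each reduction is shown to strictly decrease the finite blocking-pipe count, but the bookkeeping that the configuration survives each auxiliary droop, and that no auxiliary droop manufactures a fresh blocking pipe defeating the induction, is where the care lies.
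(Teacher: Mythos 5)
Your proposal matches the paper's proof essentially step for step: the paper likewise reduces to Lemma~\ref{lem:blocking}, re-chooses the pattern so that a $(1,1)$ blocking pipe becomes the leading $1$ (strictly decreasing the blocking count), and handles the remaining $(1,2)$ blocking pipes by drooping them rightmost-first into the target square, then drooping the $1$-pipe, and checking that the resulting chain of alternating elbows is exactly the (chained) configuration of Lemma~\ref{l:config}. The one imprecision is your phrase that ``the main droop becomes unobstructed'': after the auxiliary droops the original target square is occupied by the last blocking pipe's $\inlinejay$-elbow, so the $1$-pipe instead performs a shortened droop into that pipe's vacated elbow position --- which is precisely what the paper does, and your conclusion is unaffected.
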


\begin{proof}
    By Proposition~\ref{prop:patt} the BPD made from the Rothe diagram for $1423$ by the droop move $(1,1) \ssearrow (2,3)$ has a non-reduced co-BPD.
    We will assume that $w$ contains blocking pipes as otherwise the result is true by Lemma~\ref{lem:blocking}.
    If there is a blocking pipe of the form $(1,1)$ take that pipe as the $1$ in the pattern instead to get the pattern to have one less blocking pipe.
    Thus, we can assume all the blocking pipes are of the form $(1,2)$.
    
    From such a Rothe BPD we can get a BPD with non-reduced co-BPD by drooping the right-most blocking pipe into where the 1 pattern pipe of the pattern would droop if unblocked, then attempt to droop the 1 pattern pipe to where the elbow for the drooped blocking pipe was.
    Note that the flat from the 2 in the pattern will be right of the $\inlineare$ from the same pipe which in turn will be directly below the flat made when the 1 in the pattern eventually drops.
    Further, that flat will be below a $\inlinejay$ elbow from the last blocking pipe, which in turn is below a $\inlineare$ elbow from the same pipe.
    That elbow is then right of the $\inlinejay$ from the next blocking pipe if one exists.
    This pattern then continues until the $\inlineare$ elbow from a blocking pipe is right of the $\inlinejay$ elbow form the 1 in the pattern.
    Therefore, the BPD has the configuration.
\end{proof}

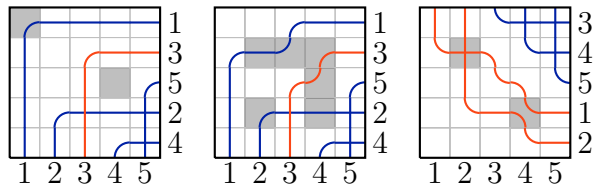
\begin{figure}[h!]
        \centering

        \begin{tikzpicture}[scale =.4]\node at (3,2) {\shade};\node at (0,4) {\shade};\grid{4}{4}\node at (0,4) {\are[Navy]};\node at (1,4) {\hwire[Navy]};\node at (2,4) {\hwire[Navy]};\node at (3,4) {\hwire[Navy]};\node at (4,4) {\hwire[Navy]};\node at (0,3) {\vwire[Navy]};\node at (1,3) {\nowire};\node at (2,3) {\are[Orange]};\node at (3,3) {\hwire[Orange]};\node at (4,3) {\hwire[Orange]};\node at (0,2) {\vwire[Navy]};\node at (1,2) {\nowire};\node at (2,2) {\vwire[Orange]};\node at (3,2) {\nowire};\node at (4,2) {\are[Navy]};\node at (0,1) {\vwire[Navy]};\node at (1,1) {\are[Navy]};\node at (2,1) {\hwire[Navy]};\node at (2,1) {\vwire[Orange]};\node at (3,1) {\hwire[Navy]};\node at (4,1) {\hwire[Navy]};\node at (4,1) {\vwire[Navy]};\node at (0,0) {\vwire[Navy]};\node at (1,0) {\vwire[Navy]};\node at (2,0) {\vwire[Orange]};\node at (3,0) {\are[Navy]};\node at (4,0) {\hwire[Navy]};\node at (4,0) {\vwire[Navy]};\draw[thick] (-0.5,-0.5) -- (-0.5,4.5) -- (4.5,4.5) -- (4.5,-0.5) -- (-0.5,-0.5);\node at (0,-1) {1};\node at (5,4) {1};\node at (1,-1) {2};\node at (5,3) {3};\node at (2,-1) {3};\node at (5,2) {5};\node at (3,-1) {4};\node at (5,1) {2};\node at (4,-1) {5};\node at (5,0) {4};\end{tikzpicture}
        \begin{tikzpicture}[scale =.4]\node at (1,3) {\shade};\node at (1,1) {\shade};\node at (3,1) {\shade};\node at (3,2) {\shade};\node at (3,3) {\shade};\node at (2,3) {\shade};\grid{4}{4}\node at (0,4) {\nowire};\node at (1,4) {\nowire};\node at (2,4) {\are[Navy]};\node at (3,4) {\hwire[Navy]};\node at (4,4) {\hwire[Navy]};\node at (0,3) {\are[Navy]};\node at (1,3) {\hwire[Navy]};\node at (2,3) {\jay[Navy]};\node at (3,3) {\are[Orange]};\node at (4,3) {\hwire[Orange]};\node at (0,2) {\vwire[Navy]};\node at (1,2) {\nowire};\node at (2,2) {\are[Orange]};\node at (3,2) {\jay[Orange]};\node at (4,2) {\are[Navy]};\node at (0,1) {\vwire[Navy]};\node at (1,1) {\are[Navy]};\node at (2,1) {\hwire[Navy]};\node at (2,1) {\vwire[Orange]};\node at (3,1) {\hwire[Navy]};\node at (4,1) {\hwire[Navy]};\node at (4,1) {\vwire[Navy]};\node at (0,0) {\vwire[Navy]};\node at (1,0) {\vwire[Navy]};\node at (2,0) {\vwire[Orange]};\node at (3,0) {\are[Navy]};\node at (4,0) {\hwire[Navy]};\node at (4,0) {\vwire[Navy]};\draw[thick] (-0.5,-0.5) -- (-0.5,4.5) -- (4.5,4.5) -- (4.5,-0.5) -- (-0.5,-0.5);\node at (0,-1) {1};\node at (5,4) {1};\node at (1,-1) {2};\node at (5,3) {3};\node at (2,-1) {3};\node at (5,2) {5};\node at (3,-1) {4};\node at (5,1) {2};\node at (4,-1) {5};\node at (5,0) {4};\end{tikzpicture}
        \begin{tikzpicture}[scale =.4]\node at (1,3) {\shade};\node at (3,1) {\shade};\grid{4}{4}\node at (0,4) {\vwire[Orange]};\node at (1,4) {\vwire[Orange]};\node at (2,4) {\el[Navy]};\node at (3,4) {\hwire[Navy]};\node at (3,4) {\vwire[Navy]};\node at (4,4) {\hwire[Navy]};\node at (4,4) {\vwire[Navy]};\node at (0,3) {\el[Orange]};\node at (1,3) {\hwire[Orange]};\node at (1,3) {\vwire[Orange]};\node at (2,3) {\en[Orange]};\node at (3,3) {\el[Navy]};\node at (4,3) {\hwire[Navy]};\node at (4,3) {\vwire[Navy]};\node at (0,2) {\nowire};\node at (1,2) {\vwire[Orange]};\node at (2,2) {\el[Orange]};\node at (3,2) {\en[Orange]};\node at (4,2) {\el[Navy]};\node at (0,1) {\nowire};\node at (1,1) {\el[Orange]};\node at (2,1) {\hwire[Orange]};\node at (3,1) {\newire[Orange]};\node at (3,1) {\swwire[Orange]};\node at (4,1) {\hwire[Orange]};\node at (0,0) {\nowire};\node at (1,0) {\nowire};\node at (2,0) {\nowire};\node at (3,0) {\el[Orange]};\node at (4,0) {\hwire[Orange]};\draw[thick] (-0.5,-0.5) -- (-0.5,4.5) -- (4.5,4.5) -- (4.5,-0.5) -- (-0.5,-0.5);\node at (0,-1) {1};\node at (5,4) {3};\node at (1,-1) {2};\node at (5,3) {4};\node at (2,-1) {3};\node at (5,2) {5};\node at (3,-1) {4};\node at (5,1) {1};\node at (4,-1) {5};\node at (5,0) {2};\end{tikzpicture}
        
        \caption{A $1423$ pattern with a $(1,2)$ blocking pipe.}
        \label{fig:1423Blocks}
    \end{figure}

\begin{proposition}\label{p:13254}
    If $w$ contains a pattern $\pi=13254$, then there exists $B\in \BPD{w}$ such that co$(B)$ is non-reduced.
\end{proposition}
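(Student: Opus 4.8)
The plan is to follow the template of Proposition~\ref{p:1423}. By Proposition~\ref{prop:patt}, the droop moves recorded in Appendix~\ref{a:droops} turn the Rothe BPD of $13254$ into a BPD whose co-BPD is non-reduced, and if $w$ contains $13254$ with no blocking pipes for those moves then Lemma~\ref{lem:blocking} produces the desired $B \in \BPD{w}$ at once. So I would reduce immediately to the case that $w$ has blocking pipes, and the whole content of the argument is to show that these can always be absorbed or routed around while preserving the configuration of Lemma~\ref{l:config}.

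First I would list, for each droop move $(a,b)\ssearrow(c,d)$ used for $13254$, the admissible blocking-pipe types $(x,y)$ with $a \le x < c$ and $b \le y < d$. As in the $1423$ case, any blocking pipe that sits in the same relative row as one of the five pattern pipes can be reassigned to play that pipe's role, which strictly lowers the number of blocking pipes; so I may assume no such absorption is available. Next I would observe that many of the remaining blocking-pipe positions force a $1423$ pattern among the pattern pipes together with the blocking pipe, and in every such case Proposition~\ref{p:1423} already gives a BPD with non-reduced co-BPD; this lets me discard all configurations that create a $1423$. For the genuinely new blocking pipes I would reroute the droops exactly as in Proposition~\ref{p:1423}: droop the innermost blocking pipe into the target cell first and then droop the pattern pipe into the vacated cell, which installs an $\inlineare$-elbow above a flat, a $\inlinejay$-elbow from the blocking pipe beneath that flat, then its $\inlineare$-elbow, and so on, lengthening the chain of alternating $\inlineare$- and $\inlinejay$-elbows that joins the two witness flats. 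I would then check, move by move, that each region drooped into is genuinely active (no stray elbow intrudes, since the blocking pipes are peeled off from the inside out) and that the two terminal $\inlineare$-elbows, and likewise the two terminal $\inlinejay$-elbows, remain connected by such an alternating chain, so that Lemma~\ref{l:config} certifies $\co{B}$ is non-reduced.

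The hard part will be the bookkeeping caused by the extra length of $13254$: with five pattern pipes and a multi-step droop sequence, blocking pipes can occur in several relative positions simultaneously, and I must verify that absorbing or rerouting around one never re-blocks a droop needed for another and never breaks the alternating-chain condition where the two halves of the configuration meet. I expect that after the absorptions and the appeals to Proposition~\ref{p:1423} only a short list of genuinely new blocking patterns remains, each of which can be settled by an explicit reroute drawn in the style of Figure~\ref{fig:1423Blocks}.
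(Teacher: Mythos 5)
Your outline reproduces the paper's overall architecture (Proposition~\ref{prop:patt} plus Lemma~\ref{lem:blocking}, then absorb or reroute blocking pipes), and parts of it are sound. The admissible blocking types for the droops $(2,3)\ssearrow(4,4)$ and $(1,1)\ssearrow(2,3)$ are $(1,1)$, $(1,2)$, $(2,3)$, and $(3,3)$; a $(1,1)$ blocker can indeed be taken as the new $1$ and a $(2,3)$ blocker as the new $3$ (though the latter does not strictly lower the blocker count as you claim --- it converts the displaced $3$ pipe into a $(1,2)$ blocker). Your idea of discarding cases that create a $1423$ also works for $(1,2)$ blockers: inserting such a pipe gives the word $142365$, which contains $1423$, so Proposition~\ref{p:1423} finishes that case. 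This is in fact a legitimate shortcut the paper does not take (the paper reroutes around $(1,2)$ blockers explicitly), and it is the same reduction style the paper uses later, e.g.\ in Proposition~\ref{p:12543}.

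The genuine gap is the $(3,3)$ blocking pipes, and your proposal never touches them. Inserting a $(3,3)$ blocker yields the word $132465$, which avoids $1423$ (its longest decreasing subsequence has length $2$, and no large value is followed by two smaller values exceeding the initial $1$ in the required order), and such a pipe cannot be reassigned to any role in a $13254$ (every substitution produces a $13245$- or $12354$-type word). So neither absorption nor reduction to Proposition~\ref{p:1423} applies, and the entire proposition rests on the explicit cascade you defer with ``I expect \ldots can be settled by an explicit reroute'': droop the left-most $(3,3)$ blocker into the cell the $3$ pipe wants, droop the $3$ pipe into the vacated elbow, repeat until the $3$ pipe lands, then cascade the $1$ pipe through any remaining $(1,2)$ blockers the same way; finally one must verify that the elbows created by these cascades form the alternating chain and that the flats of the $2$ pipe witness the terminal $\inlineare$- and $\inlinejay$-elbows, as Lemma~\ref{l:config} requires. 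That construction and verification is the mathematical content of the proposition; announcing that you would ``check, move by move'' is not the check itself, so as written the proposal does not prove the statement.
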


\begin{proof}
    By Proposition~\ref{prop:patt} the BPD made from Rothe diagram for $13254$ by the droop moves $(2,3) \ssearrow (4,4)$ and $(1,1)\ssearrow (2,3)$ has a non-reduced co-BPD.
    We will assume that $w$ contains blocking pipes as otherwise the result is true by Lemma~\ref{lem:blocking}.
    If there is a blocking pipe of the form $(1,1)$ take that pipe as the $1$ in the pattern instead to get the pattern to have one less blocking pipe.
    Note that blocking pipes of the form $(2,3)$ can instead be considered to be the $3$ pipe of the pattern resulting in the old $3$ becoming a $(1,2)$ blocking pipe.
    Thus, we can assume all the blocking pipes are of the form $(1,2)$ or $(3,3)$.
    
    From such a Rothe BPD we can get a BPD with non-reduced co-BPD by drooping the left most $(3,3)$ blocking pipe into where the 3 pattern pipe of the pattern would droop if unblocked, then attempt to droop the 3 pattern pipe to where the elbow for the drooped blocking pipe was.
    If the 3 pattern pipe is still block repeat this process.
    Then attempt to droop the 1 pattern pipe to where the elbow for the 3 pattern pipe was.
    If this droop move is blocked by a $(1,2)$ blocking pipe instead droop the right most blocking pipe and then attempt to droop the 1 pattern pipe to where the elbow for the blocking pipe was.
    Repeat this process until the 1 pattern pipe is drooped.
    By dropping where the old elbow was we ensure that we create the chain of elbows required for the configuration, all that remains to check is that the flats witness elbows.
    Then by drooping the $(3,3)$ blocking pipes and the 3 pattern pipe eventually a empty space is opened above a flat in the 2 pattern pipe.
    Then either the 1 pattern pipe or a $(1,2)$ blocking pipe creates $\inlinejay$ above that flat and the flat has a $\inlineare$ to its right.
    Further, note that last $(1,2)$ blocking pipe will droop from a row that has an empty box above the $\inlineare$ from the 2 pattern pipe.
    Therefore, when the 1 pipe droops it will create a flat above the $\inlineare$ from the 2 pattern pipe and the flat will be left of the $\inlinejay$ created from the droop.
    Therefore, the BPD has the configuration.

    \begin{figure}[h!]
        \centering

        \begin{tikzpicture}[scale =.4]\node at (5,1) {\shade};\node at (3,4) {\shade};\grid{6}{6}\node at (0,6) {\are[Navy]};\node at (1,6) {\hwire[Navy]};\node at (2,6) {\hwire[Navy]};\node at (3,6) {\hwire[Navy]};\node at (4,6) {\hwire[Navy]};\node at (5,6) {\hwire[Navy]};\node at (6,6) {\hwire[Navy]};\node at (0,5) {\vwire[Navy]};\node at (1,5) {\nowire};\node at (2,5) {\are[Orange]};\node at (3,5) {\hwire[Orange]};\node at (4,5) {\hwire[Orange]};\node at (5,5) {\hwire[Orange]};\node at (6,5) {\hwire[Orange]};\node at (0,4) {\vwire[Navy]};\node at (1,4) {\nowire};\node at (2,4) {\vwire[Orange]};\node at (3,4) {\are[Navy]};\node at (4,4) {\hwire[Navy]};\node at (5,4) {\hwire[Navy]};\node at (6,4) {\hwire[Navy]};\node at (0,3) {\vwire[Navy]};\node at (1,3) {\are[Navy]};\node at (2,3) {\hwire[Navy]};\node at (2,3) {\vwire[Orange]};\node at (3,3) {\hwire[Navy]};\node at (3,3) {\vwire[Navy]};\node at (4,3) {\hwire[Navy]};\node at (5,3) {\hwire[Navy]};\node at (6,3) {\hwire[Navy]};\node at (0,2) {\vwire[Navy]};\node at (1,2) {\vwire[Navy]};\node at (2,2) {\vwire[Orange]};\node at (3,2) {\vwire[Navy]};\node at (4,2) {\are[Orange]};\node at (5,2) {\hwire[Orange]};\node at (6,2) {\hwire[Orange]};\node at (0,1) {\vwire[Navy]};\node at (1,1) {\vwire[Navy]};\node at (2,1) {\vwire[Orange]};\node at (3,1) {\vwire[Navy]};\node at (4,1) {\vwire[Orange]};\node at (5,1) {\nowire};\node at (6,1) {\are[Navy]};\node at (0,0) {\vwire[Navy]};\node at (1,0) {\vwire[Navy]};\node at (2,0) {\vwire[Orange]};\node at (3,0) {\vwire[Navy]};\node at (4,0) {\vwire[Orange]};\node at (5,0) {\are[Navy]};\node at (6,0) {\hwire[Navy]};\node at (6,0) {\vwire[Navy]};\draw[thick] (-0.5,-0.5) -- (-0.5,6.5) -- (6.5,6.5) -- (6.5,-0.5) -- (-0.5,-0.5);\node at (0,-1) {1};\node at (7,6) {1};\node at (1,-1) {2};\node at (7,5) {3};\node at (2,-1) {3};\node at (7,4) {4};\node at (3,-1) {4};\node at (7,3) {2};\node at (4,-1) {5};\node at (7,2) {5};\node at (5,-1) {6};\node at (7,1) {7};\node at (6,-1) {7};\node at (7,0) {6};\end{tikzpicture}
        \begin{tikzpicture}[scale =.4]\node at (3,4) {\shade};\node at (0,6) {\shade};\grid{6}{6}\node at (0,6) {\are[Navy]};\node at (1,6) {\hwire[Navy]};\node at (2,6) {\hwire[Navy]};\node at (3,6) {\hwire[Navy]};\node at (4,6) {\hwire[Navy]};\node at (5,6) {\hwire[Navy]};\node at (6,6) {\hwire[Navy]};\node at (0,5) {\vwire[Navy]};\node at (1,5) {\nowire};\node at (2,5) {\are[Orange]};\node at (3,5) {\hwire[Orange]};\node at (4,5) {\hwire[Orange]};\node at (5,5) {\hwire[Orange]};\node at (6,5) {\hwire[Orange]};\node at (0,4) {\vwire[Navy]};\node at (1,4) {\nowire};\node at (2,4) {\vwire[Orange]};\node at (3,4) {\nowire};\node at (4,4) {\are[Navy]};\node at (5,4) {\hwire[Navy]};\node at (6,4) {\hwire[Navy]};\node at (0,3) {\vwire[Navy]};\node at (1,3) {\are[Navy]};\node at (2,3) {\hwire[Navy]};\node at (2,3) {\vwire[Orange]};\node at (3,3) {\hwire[Navy]};\node at (4,3) {\hwire[Navy]};\node at (4,3) {\vwire[Navy]};\node at (5,3) {\hwire[Navy]};\node at (6,3) {\hwire[Navy]};\node at (0,2) {\vwire[Navy]};\node at (1,2) {\vwire[Navy]};\node at (2,2) {\vwire[Orange]};\node at (3,2) {\are[Navy]};\node at (4,2) {\jay[Navy]};\node at (5,2) {\are[Orange]};\node at (6,2) {\hwire[Orange]};\node at (0,1) {\vwire[Navy]};\node at (1,1) {\vwire[Navy]};\node at (2,1) {\vwire[Orange]};\node at (3,1) {\vwire[Navy]};\node at (4,1) {\are[Orange]};\node at (5,1) {\jay[Orange]};\node at (6,1) {\are[Navy]};\node at (0,0) {\vwire[Navy]};\node at (1,0) {\vwire[Navy]};\node at (2,0) {\vwire[Orange]};\node at (3,0) {\vwire[Navy]};\node at (4,0) {\vwire[Orange]};\node at (5,0) {\are[Navy]};\node at (6,0) {\hwire[Navy]};\node at (6,0) {\vwire[Navy]};\draw[thick] (-0.5,-0.5) -- (-0.5,6.5) -- (6.5,6.5) -- (6.5,-0.5) -- (-0.5,-0.5);\node at (0,-1) {1};\node at (7,6) {1};\node at (1,-1) {2};\node at (7,5) {3};\node at (2,-1) {3};\node at (7,4) {4};\node at (3,-1) {4};\node at (7,3) {2};\node at (4,-1) {5};\node at (7,2) {5};\node at (5,-1) {6};\node at (7,1) {7};\node at (6,-1) {7};\node at (7,0) {6};\end{tikzpicture}
        \begin{tikzpicture}[scale =.4]\node at (1,5) {\shade};\node at (1,3) {\shade};\node at (3,4) {\shade};\node at (3,3) {\shade};\node at (3,5) {\shade};\node at (2,5) {\shade};\grid{6}{6}\node at (0,6) {\nowire};\node at (1,6) {\nowire};\node at (2,6) {\are[Navy]};\node at (3,6) {\hwire[Navy]};\node at (4,6) {\hwire[Navy]};\node at (5,6) {\hwire[Navy]};\node at (6,6) {\hwire[Navy]};\node at (0,5) {\are[Navy]};\node at (1,5) {\hwire[Navy]};\node at (2,5) {\jay[Navy]};\node at (3,5) {\are[Orange]};\node at (4,5) {\hwire[Orange]};\node at (5,5) {\hwire[Orange]};\node at (6,5) {\hwire[Orange]};\node at (0,4) {\vwire[Navy]};\node at (1,4) {\nowire};\node at (2,4) {\are[Orange]};\node at (3,4) {\jay[Orange]};\node at (4,4) {\are[Navy]};\node at (5,4) {\hwire[Navy]};\node at (6,4) {\hwire[Navy]};\node at (0,3) {\vwire[Navy]};\node at (1,3) {\are[Navy]};\node at (2,3) {\hwire[Navy]};\node at (2,3) {\vwire[Orange]};\node at (3,3) {\hwire[Navy]};\node at (4,3) {\hwire[Navy]};\node at (4,3) {\vwire[Navy]};\node at (5,3) {\hwire[Navy]};\node at (6,3) {\hwire[Navy]};\node at (0,2) {\vwire[Navy]};\node at (1,2) {\vwire[Navy]};\node at (2,2) {\vwire[Orange]};\node at (3,2) {\are[Navy]};\node at (4,2) {\jay[Navy]};\node at (5,2) {\are[Orange]};\node at (6,2) {\hwire[Orange]};\node at (0,1) {\vwire[Navy]};\node at (1,1) {\vwire[Navy]};\node at (2,1) {\vwire[Orange]};\node at (3,1) {\vwire[Navy]};\node at (4,1) {\are[Orange]};\node at (5,1) {\jay[Orange]};\node at (6,1) {\are[Navy]};\node at (0,0) {\vwire[Navy]};\node at (1,0) {\vwire[Navy]};\node at (2,0) {\vwire[Orange]};\node at (3,0) {\vwire[Navy]};\node at (4,0) {\vwire[Orange]};\node at (5,0) {\are[Navy]};\node at (6,0) {\hwire[Navy]};\node at (6,0) {\vwire[Navy]};\draw[thick] (-0.5,-0.5) -- (-0.5,6.5) -- (6.5,6.5) -- (6.5,-0.5) -- (-0.5,-0.5);\node at (0,-1) {1};\node at (7,6) {1};\node at (1,-1) {2};\node at (7,5) {3};\node at (2,-1) {3};\node at (7,4) {4};\node at (3,-1) {4};\node at (7,3) {2};\node at (4,-1) {5};\node at (7,2) {5};\node at (5,-1) {6};\node at (7,1) {7};\node at (6,-1) {7};\node at (7,0) {6};\end{tikzpicture}
        \begin{tikzpicture}[scale =.4]\node at (1,5) {\shade};\node at (3,3) {\shade};\grid{6}{6}\node at (0,6) {\vwire[Orange]};\node at (1,6) {\vwire[Orange]};\node at (2,6) {\el[Navy]};\node at (3,6) {\hwire[Navy]};\node at (3,6) {\vwire[Navy]};\node at (4,6) {\hwire[Navy]};\node at (4,6) {\vwire[Navy]};\node at (5,6) {\hwire[Navy]};\node at (5,6) {\vwire[Navy]};\node at (6,6) {\hwire[Navy]};\node at (6,6) {\vwire[Navy]};\node at (0,5) {\el[Orange]};\node at (1,5) {\hwire[Orange]};\node at (1,5) {\vwire[Orange]};\node at (2,5) {\en[Orange]};\node at (3,5) {\el[Navy]};\node at (4,5) {\hwire[Navy]};\node at (4,5) {\vwire[Navy]};\node at (5,5) {\hwire[Navy]};\node at (5,5) {\vwire[Navy]};\node at (6,5) {\hwire[Navy]};\node at (6,5) {\vwire[Navy]};\node at (0,4) {\nowire};\node at (1,4) {\vwire[Orange]};\node at (2,4) {\el[Orange]};\node at (3,4) {\en[Orange]};\node at (4,4) {\el[Navy]};\node at (5,4) {\hwire[Navy]};\node at (5,4) {\vwire[Navy]};\node at (6,4) {\hwire[Navy]};\node at (6,4) {\vwire[Navy]};\node at (0,3) {\nowire};\node at (1,3) {\el[Orange]};\node at (2,3) {\hwire[Orange]};\node at (3,3) {\newire[Orange]};\node at (3,3) {\swwire[Orange]};\node at (4,3) {\hwire[Orange]};\node at (5,3) {\hwire[Orange]};\node at (5,3) {\vwire[Navy]};\node at (6,3) {\hwire[Orange]};\node at (6,3) {\vwire[Navy]};\node at (0,2) {\nowire};\node at (1,2) {\nowire};\node at (2,2) {\nowire};\node at (3,2) {\el[Orange]};\node at (4,2) {\en[Orange]};\node at (5,2) {\el[Navy]};\node at (6,2) {\hwire[Navy]};\node at (6,2) {\vwire[Navy]};\node at (0,1) {\nowire};\node at (1,1) {\nowire};\node at (2,1) {\nowire};\node at (3,1) {\nowire};\node at (4,1) {\el[Orange]};\node at (5,1) {\en[Orange]};\node at (6,1) {\el[Navy]};\node at (0,0) {\nowire};\node at (1,0) {\nowire};\node at (2,0) {\nowire};\node at (3,0) {\nowire};\node at (4,0) {\nowire};\node at (5,0) {\el[Orange]};\node at (6,0) {\hwire[Orange]};\draw[thick] (-0.5,-0.5) -- (-0.5,6.5) -- (6.5,6.5) -- (6.5,-0.5) -- (-0.5,-0.5);\node at (0,-1) {1};\node at (7,6) {3};\node at (1,-1) {2};\node at (7,5) {4};\node at (2,-1) {3};\node at (7,4) {5};\node at (3,-1) {4};\node at (7,3) {1};\node at (4,-1) {5};\node at (7,2) {6};\node at (5,-1) {6};\node at (7,1) {7};\node at (6,-1) {7};\node at (7,0) {2};\end{tikzpicture}

        \caption{A $13254$ pattern with $(1,2)$ and $(3,3)$ blocking pipes.}
        \label{fig:12543Blocks}
    \end{figure}
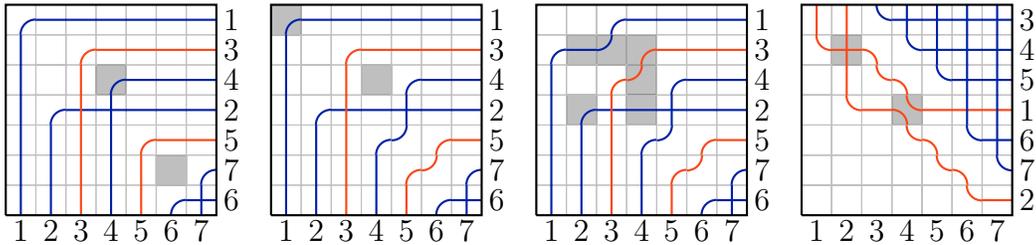
\end{proof}

\begin{proposition}\label{p:12543}
    If $w$ contains a pattern $\pi=12543$, then there exists $B\in \BPD{w}$ such that co$(B)$ is non-reduced.
\end{proposition}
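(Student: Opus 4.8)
The plan is to mirror the structure of Propositions~\ref{p:1423} and~\ref{p:13254}, since $12543$ is itself a minimal obstruction (it contains neither $1423$ nor $13254$), so there is no reduction to an earlier case and a direct construction is required. First I would record the base construction promised by Proposition~\ref{prop:patt}: starting from the Rothe BPD of $12543$, perform the droops $(2,2)\ssearrow(3,4)$ and then $(1,1)\ssearrow(2,3)$. The first droop sends the value-$2$ pipe into the blank region so that along it one reads $\inlineare\,\inlinehwire\,\inlinejay\,\inlineare$; the second places the value-$1$ pipe so that it contributes an $\inlinehwire$ with a $\inlinejay$ immediately to its right, sitting directly above the value-$2$ pipe's $\inlineare$ and $\inlinehwire$. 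The result is a $2\times 2$ witnessing block whose top row is $\inlinehwire\,\inlinejay$ (both in the value-$1$ pipe $p$) and whose bottom row is $\inlineare\,\inlinehwire$ (both in the value-$2$ pipe $q$); this is exactly the configuration of Lemma~\ref{l:config} with its elbow chain degenerate, so $\co{B}$ is non-reduced. If $w$ contains $12543$ with no blocking pipes for these two moves, Lemma~\ref{lem:blocking} finishes immediately, so I would assume blocking pipes are present.

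Next I would classify the blocking pipes by type $(x,y)$. The admissible interior cells of the two droop rectangles give types $(1,1)$ and $(1,2)$ for the move $(1,1)\ssearrow(2,3)$, and types $(2,2)$ and $(2,3)$ for the move $(2,2)\ssearrow(3,4)$. As in the earlier proofs I would absorb the degenerate types into pattern pipes to lower the count: a $(1,1)$-pipe is re-chosen as the value-$1$ pipe (as in Proposition~\ref{p:1423}), and I expect each $(2,2)$- or $(2,3)$-pipe can be re-chosen as one of the descending pattern pipes (the analogue of reinterpreting a $(2,3)$-pipe as the ``$3$'' in Proposition~\ref{p:13254}), every substitution strictly decreasing the number of remaining blocking pipes. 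This should reduce matters to the case in which every surviving blocking pipe is of type $(1,2)$ or $(2,3)$, closely parallel to the $(1,2)$/$(3,3)$ situation of Proposition~\ref{p:13254}.

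I would then reroute the droops around the surviving blocking pipes by the standard device: droop the extreme blocking pipe into the target cell of the pattern pipe it blocks, then droop that pattern pipe into the $\inlineare$ vacated by the blocking pipe, and iterate from the inside out. Dropping each pipe into the previously occupied elbow is what forces the alternating chain of $\inlineare$- and $\inlinejay$-elbows demanded by Lemma~\ref{l:config}; because $12543$ puts us in the $\inlinejay$-after-witnessing regime of Proposition~\ref{p:droop}, the value-$2$ pipe continues to droop below the block and supplies the bottom end of that chain. It then remains to verify the two flats witness their elbows, namely that the value-$2$ pipe's bottom $\inlinehwire$ lies directly below the $\inlinejay$ created by the last $(1,2)$-pipe (or by the value-$1$ pipe itself), and that the top $\inlinehwire$ lies directly above the value-$2$ pipe's $\inlineare$.

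The main obstacle will be this final bookkeeping step: confirming that, however the $(1,2)$- and $(2,3)$-blocking pipes interleave, the rerouted droops always close up into a single alternating chain with no stray elbow breaking a row- or column-adjacency, and that the witnessing flats survive the rerouting. Since $12543$ forces the value-$2$ pipe to droop a second time (unlike the single-block $1423$ case, where the chain is trivial), the chain is strictly longer and its two ends are produced by different pipes, so the verification is more delicate than in Proposition~\ref{p:1423} and most closely resembles the iterative argument used in Proposition~\ref{p:13254}.
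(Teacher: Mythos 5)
Your proposal diverges from the paper at its very first move, and that divergence creates the gap you yourself flag at the end. You assert that because $12543$ contains neither $1423$ nor $13254$, ``there is no reduction to an earlier case and a direct construction is required.'' This is a non sequitur, and the claim is false: the reduction the paper uses does not need $\pi$ itself to contain a smaller forbidden pattern, because it uses the \emph{blocking pipe} as one of the letters of the smaller pattern. Concretely, a $(1,2)$-blocking pipe enters after the $1$- and $2$-pipes of $\pi$ and exits strictly between them, so together with the $1$, $2$, $5$, and $4$ pipes of $\pi$ it forms a subsequence of $w$ order isomorphic to $13254$, and Proposition~\ref{p:13254} applies immediately. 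With this observation (plus the $(1,1)\mapsto 1$ and $(2,2)\mapsto 2$ substitutions, which you do have), the only surviving blockers are of type $(2,3)$, and for those the rerouting closes up easily: after they are drooped out of the way, the $2$-pipe droops into a row containing two adjacent blank tiles, its droop opens a $2\times 2$ blank block, and the $1$-pipe droops into that block, producing the degenerate, chain-free $2\times 2$ instance of the configuration of Lemma~\ref{l:config}.

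Because you foreclose that reduction, your plan must reroute around $(1,2)$- and $(2,3)$-blocking pipes simultaneously, producing an elbow chain whose two ends are created by different pipes, and you explicitly leave unverified that, for every interleaving of the two types, the rerouted droops close into a single alternating chain with both witnessing flats intact. That is the entire difficulty of this proposition; naming it ``the main obstacle'' does not discharge it, so as written your argument is a plan rather than a proof precisely at the step where one is needed. (The route is not hopeless: for $w = 132654$, drooping the $(1,2)$ pipe into the $1$-pipe's target and then the $1$-pipe into the vacated elbow does yield a configuration with a length-two chain; but a general argument over interleaved blockers, presumably an induction in the style of Proposition~\ref{p:13254}, would have to be supplied.) A secondary slip: a $(2,3)$-blocking pipe can never be re-chosen as a pattern pipe --- already in $126354$ the unique such blocker lies in no occurrence of $12543$ --- so your parenthetical analogy with re-choosing the ``$3$'' in Proposition~\ref{p:13254} does not transfer; this is exactly why the paper absorbs only $(1,1)$ and $(2,2)$ pipes and reroutes around the $(2,3)$ ones.
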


\begin{proof}
    By Proposition~\ref{prop:patt} the BPD made from Rothe diagram for $12543$ by the droop moves $(2,2) \ssearrow (3,4)$ and $(1,1)\ssearrow (2,3)$ has a non-reduced co-BPD.
    We will assume that $w$ contains blocking pipes as otherwise the result is true by Lemma~\ref{lem:blocking}.
    If there is a blocking pipe of the form $(1,2)$ then the blocking pipe forms a $13254$ pattern with the $1$, $2$, $4$, and $5$ pipes from $\pi$, thus by Proposition~\ref{p:13254} we have our result.
    If there is a blocking pipe of the form $(1,1)$ take that pipe as the $1$ in the pattern instead to get the pattern to have one less blocking pipe.
    If there is a blocking pipe of the form $(2,2)$ take that pipe as the $2$ in the pattern instead to get the pattern to have one less blocking pipe.
    Thus, we can assume all the blocking pipes are of the form $(2,3)$.
    
    From such a Rothe BPD we can get a BPD with non-reduced co-BPD by drooping the right-most blocking pipe into where the 2 pattern pipe of the pattern would droop if unblocked, then attempt to droop the 2 pattern pipe to where the elbow for the drooped blocking pipe was.
    If the 2 pattern pipe is blocked then repeat this process.
    Note that after drooping all the blocking pipes that the 2 pattern pipe will droop into a two empty blocks in the same row.
    Thus, when the 2 pattern pipe droops into that block it will create another two empty blocks for the 1 pattern pipe to droop into which will create the configuration.
\end{proof}

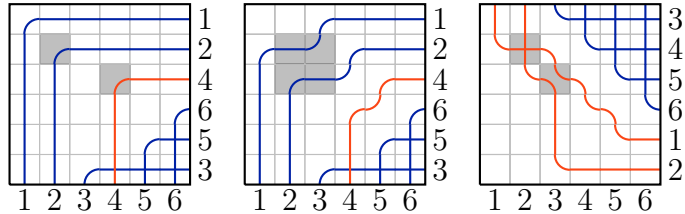
\begin{figure}[h!]
        \centering

        \begin{tikzpicture}[scale =.4]\node at (1,4) {\shade};\node at (3,3) {\shade};\grid{5}{5}\node at (0,5) {\are[Navy]};\node at (1,5) {\hwire[Navy]};\node at (2,5) {\hwire[Navy]};\node at (3,5) {\hwire[Navy]};\node at (4,5) {\hwire[Navy]};\node at (5,5) {\hwire[Navy]};\node at (0,4) {\vwire[Navy]};\node at (1,4) {\are[Navy]};\node at (2,4) {\hwire[Navy]};\node at (3,4) {\hwire[Navy]};\node at (4,4) {\hwire[Navy]};\node at (5,4) {\hwire[Navy]};\node at (0,3) {\vwire[Navy]};\node at (1,3) {\vwire[Navy]};\node at (2,3) {\nowire};\node at (3,3) {\are[Orange]};\node at (4,3) {\hwire[Orange]};\node at (5,3) {\hwire[Orange]};\node at (0,2) {\vwire[Navy]};\node at (1,2) {\vwire[Navy]};\node at (2,2) {\nowire};\node at (3,2) {\vwire[Orange]};\node at (4,2) {\nowire};\node at (5,2) {\are[Navy]};\node at (0,1) {\vwire[Navy]};\node at (1,1) {\vwire[Navy]};\node at (2,1) {\nowire};\node at (3,1) {\vwire[Orange]};\node at (4,1) {\are[Navy]};\node at (5,1) {\hwire[Navy]};\node at (5,1) {\vwire[Navy]};\node at (0,0) {\vwire[Navy]};\node at (1,0) {\vwire[Navy]};\node at (2,0) {\are[Navy]};\node at (3,0) {\hwire[Navy]};\node at (3,0) {\vwire[Orange]};\node at (4,0) {\hwire[Navy]};\node at (4,0) {\vwire[Navy]};\node at (5,0) {\hwire[Navy]};\node at (5,0) {\vwire[Navy]};\draw[thick] (-0.5,-0.5) -- (-0.5,5.5) -- (5.5,5.5) -- (5.5,-0.5) -- (-0.5,-0.5);\node at (0,-1) {1};\node at (6,5) {1};\node at (1,-1) {2};\node at (6,4) {2};\node at (2,-1) {3};\node at (6,3) {4};\node at (3,-1) {4};\node at (6,2) {6};\node at (4,-1) {5};\node at (6,1) {5};\node at (5,-1) {6};\node at (6,0) {3};\end{tikzpicture}
        \begin{tikzpicture}[scale =.4]\node at (1,4) {\shade};\node at (1,3) {\shade};\node at (2,4) {\shade};\node at (2,3) {\shade};\grid{5}{5}\node at (0,5) {\nowire};\node at (1,5) {\nowire};\node at (2,5) {\are[Navy]};\node at (3,5) {\hwire[Navy]};\node at (4,5) {\hwire[Navy]};\node at (5,5) {\hwire[Navy]};\node at (0,4) {\are[Navy]};\node at (1,4) {\hwire[Navy]};\node at (2,4) {\jay[Navy]};\node at (3,4) {\are[Navy]};\node at (4,4) {\hwire[Navy]};\node at (5,4) {\hwire[Navy]};\node at (0,3) {\vwire[Navy]};\node at (1,3) {\are[Navy]};\node at (2,3) {\hwire[Navy]};\node at (3,3) {\jay[Navy]};\node at (4,3) {\are[Orange]};\node at (5,3) {\hwire[Orange]};\node at (0,2) {\vwire[Navy]};\node at (1,2) {\vwire[Navy]};\node at (2,2) {\nowire};\node at (3,2) {\are[Orange]};\node at (4,2) {\jay[Orange]};\node at (5,2) {\are[Navy]};\node at (0,1) {\vwire[Navy]};\node at (1,1) {\vwire[Navy]};\node at (2,1) {\nowire};\node at (3,1) {\vwire[Orange]};\node at (4,1) {\are[Navy]};\node at (5,1) {\hwire[Navy]};\node at (5,1) {\vwire[Navy]};\node at (0,0) {\vwire[Navy]};\node at (1,0) {\vwire[Navy]};\node at (2,0) {\are[Navy]};\node at (3,0) {\hwire[Navy]};\node at (3,0) {\vwire[Orange]};\node at (4,0) {\hwire[Navy]};\node at (4,0) {\vwire[Navy]};\node at (5,0) {\hwire[Navy]};\node at (5,0) {\vwire[Navy]};\draw[thick] (-0.5,-0.5) -- (-0.5,5.5) -- (5.5,5.5) -- (5.5,-0.5) -- (-0.5,-0.5);\node at (0,-1) {1};\node at (6,5) {1};\node at (1,-1) {2};\node at (6,4) {2};\node at (2,-1) {3};\node at (6,3) {4};\node at (3,-1) {4};\node at (6,2) {6};\node at (4,-1) {5};\node at (6,1) {5};\node at (5,-1) {6};\node at (6,0) {3};\end{tikzpicture}
        \begin{tikzpicture}[scale =.4]\node at (1,4) {\shade};\node at (2,3) {\shade};\grid{5}{5}\node at (0,5) {\vwire[Orange]};\node at (1,5) {\vwire[Orange]};\node at (2,5) {\el[Navy]};\node at (3,5) {\hwire[Navy]};\node at (3,5) {\vwire[Navy]};\node at (4,5) {\hwire[Navy]};\node at (4,5) {\vwire[Navy]};\node at (5,5) {\hwire[Navy]};\node at (5,5) {\vwire[Navy]};\node at (0,4) {\el[Orange]};\node at (1,4) {\hwire[Orange]};\node at (1,4) {\vwire[Orange]};\node at (2,4) {\en[Orange]};\node at (3,4) {\el[Navy]};\node at (4,4) {\hwire[Navy]};\node at (4,4) {\vwire[Navy]};\node at (5,4) {\hwire[Navy]};\node at (5,4) {\vwire[Navy]};\node at (0,3) {\nowire};\node at (1,3) {\el[Orange]};\node at (2,3) {\newire[Orange]};\node at (2,3) {\swwire[Orange]};\node at (3,3) {\en[Orange]};\node at (4,3) {\el[Navy]};\node at (5,3) {\hwire[Navy]};\node at (5,3) {\vwire[Navy]};\node at (0,2) {\nowire};\node at (1,2) {\nowire};\node at (2,2) {\vwire[Orange]};\node at (3,2) {\el[Orange]};\node at (4,2) {\en[Orange]};\node at (5,2) {\el[Navy]};\node at (0,1) {\nowire};\node at (1,1) {\nowire};\node at (2,1) {\vwire[Orange]};\node at (3,1) {\nowire};\node at (4,1) {\el[Orange]};\node at (5,1) {\hwire[Orange]};\node at (0,0) {\nowire};\node at (1,0) {\nowire};\node at (2,0) {\el[Orange]};\node at (3,0) {\hwire[Orange]};\node at (4,0) {\hwire[Orange]};\node at (5,0) {\hwire[Orange]};\draw[thick] (-0.5,-0.5) -- (-0.5,5.5) -- (5.5,5.5) -- (5.5,-0.5) -- (-0.5,-0.5);\node at (0,-1) {1};\node at (6,5) {3};\node at (1,-1) {2};\node at (6,4) {4};\node at (2,-1) {3};\node at (6,3) {5};\node at (3,-1) {4};\node at (6,2) {6};\node at (4,-1) {5};\node at (6,1) {1};\node at (5,-1) {6};\node at (6,0) {2};\end{tikzpicture}
        
        \caption{A $12543$ pattern with a $(2,3)$ blocking pipe.}
        \label{fig:12543Blocks}
\end{figure}

\begin{proposition}\label{p:25143}
    If $w$ contains a pattern $\pi=25143$, then there exists $B\in \BPD{w}$ such that co$(B)$ is non-reduced.
\end{proposition}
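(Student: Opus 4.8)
The plan is to follow the same template as Propositions~\ref{p:1423}, \ref{p:13254}, and \ref{p:12543}. By Proposition~\ref{prop:patt}, there is an explicit sequence of droop moves (recorded in Appendix~\ref{a:droops}) that turns the Rothe BPD of $25143$ into a $B$ whose $\co{B}$ is non-reduced; here the two witness pipes $p$ and $q$ cross before creating the configuration, which is exactly why the pattern produced is $25143$ rather than a shorter one (mirroring Case~4 of Proposition~\ref{p:cross}). As in the earlier propositions, if $w$ contains no blocking pipes for these droops then Lemma~\ref{lem:blocking} immediately produces the desired $B \in \BPD{w}$, so I assume throughout that blocking pipes are present and argue by either reducing their number or reducing to a pattern already handled.

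First I would classify the possible blocking pipes by their type $(x,y)$, where $x$ (resp.\ $y$) counts the pattern pipes entering above (resp.\ to the left of) the blocking pipe, using the bound that a blocking pipe for a droop $(a,b) \ssearrow (c,d)$ has type $(x,y)$ with $a \le x < c$ and $b \le y < d$. For each admissible type I would do one of three things. Certain types can be \emph{absorbed}: if a blocking pipe occupies the corner of one of the pattern pipes, I re-select it as that pattern pipe, strictly decreasing the number of blocking pipes, and recurse. Other types force a smaller pattern: a blocking pipe together with an appropriate sub-collection of the pattern pipes $\{2,5,1,4,3\}$ will be order-isomorphic to $1423$, $13254$, or $12543$, and then Proposition~\ref{p:1423}, \ref{p:13254}, or \ref{p:12543} finishes the argument. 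The remaining essential types I would handle directly: droop the extreme (left- or right-most) blocking pipe into the square that the obstructed pattern pipe wanted to occupy, then droop the pattern pipe into the elbow just vacated, repeating until the pattern pipe is fully drooped. Because each droop lands on the previously vacated elbow, this manufactures the chain of alternating $\inlineare$/$\inlinejay$ elbows demanded by Lemma~\ref{l:config}.

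Finally I would verify that the configuration of Lemma~\ref{l:config} genuinely appears, i.e.\ that the two flats witness their elbows: after drooping the blocking pipes and the pattern pipes, an empty cell opens above a flat of the lower witness pipe, and the upper witness pipe (or the last blocking pipe) contributes the $\inlinejay$ above it while the lower witness contributes the $\inlineare$ to its right, with the crossing of $p$ and $q$ still in place. The main obstacle I anticipate is exactly this interaction between the crossing of the witness pipes and the blocking pipes: unlike the non-crossing propositions, drooping a blocking pipe here can disturb the cell where $p$ and $q$ already cross, so the case analysis must guarantee that every essential blocking configuration either reduces to $1423$, $13254$, or $12543$, or can be re-drooped without destroying the pre-existing crossing. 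Checking that the list of types is exhaustive, and that no blocking pipe is ever forced to double-cross a witness pipe, is the delicate part of the argument.
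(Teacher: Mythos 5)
Your proposal adopts the same template as the paper's proof (reduce or absorb blocking pipes, then handle the essential blocking type by iterated droops), but as written it has a genuine gap: every step that constitutes the actual proof is deferred rather than carried out. Concretely, what is missing is (1) the enumeration of the possible blocking types for the two droops $(1,2)\ssearrow(2,4)$ and $(3,1)\ssearrow(4,3)$, namely $(1,2)$ and $(1,3)$ for the first and $(3,1)$ and $(3,2)$ for the second; (2) the determination of what to do with each: $(1,2)$ is absorbed by re-selecting it as the $2$ of the pattern, $(3,1)$ is absorbed as the $1$, and $(3,2)$ together with the pattern pipes $2$, $5$, $4$ forms a $1423$, so Proposition~\ref{p:1423} applies --- note that only a reduction to $1423$ is needed; your suggestion that reductions to $13254$ or $12543$ may also occur is both unverified and unnecessary for this pattern; and (3) the essential case, where all remaining blocking pipes are of type $(1,3)$ and obstruct only the droop of the $2$ pipe. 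There one droops the right-most blocking pipe into the $2$ pipe's target cell, droops the $2$ pipe into the vacated elbow, iterates, and then must explicitly verify the positions that make Lemma~\ref{l:config} apply: the flat of the $1$ pipe sits right of its own \inlineare-elbow and below the \inlineare-elbow of the first blocking pipe, while the flat created by the $2$ pipe's droop sits left of the new \inlinejay-elbow and above the right-most \inlineare-elbow of the $1$ pipe. Without this verification the claim that ``the configuration genuinely appears'' is an assertion, not an argument.

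Separately, the obstacle you flag as the delicate part --- that drooping blocking pipes might disturb the cell where the witness pipes $p$ and $q$ cross --- is a red herring relative to the paper's argument. Lemma~\ref{l:config} characterizes non-reducedness of $\co{B}$ purely in terms of flats and chains of alternating elbows; no pre-existing crossing needs to be preserved, and the paper's proof never addresses one. The real work is the positional bookkeeping in item (3) above, which your proposal acknowledges only in outline.
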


\begin{proof}
    Note that the BPD made from Rothe diagram for $25143$ by the droop moves $(1,2) \ssearrow (2,4)$ and $(3,1)\ssearrow (4,3)$ has a non-reduced co-BPD.
    We will assume that $w$ contains blocking pipes as otherwise the result is true by Lemma~\ref{lem:blocking}.
    If there is a blocking pipe of the form $(3,2)$ then the blocking pipe forms a $1423$ pattern with the $2$, $5$, and $4$ pipes form $\pi$, thus by Proposition~\ref{p:1423} we have our result.
    If there is a blocking pipe of the form $(1,2)$ take that pipe as the $2$ in the pattern instead to get the pattern to have one less blocking pipe.
    If there is a blocking pipe of the form $(3,1)$ take that pipe as the $1$ in the pattern instead to get the pattern to have one less blocking pipe.
    Thus, we can assume all the blocking pipes are of the form $(1,3)$.
    
    From such a Rothe BPD we can get a BPD with non-reduced co-BPD by drooping the right-most blocking pipe into where the 2 pattern pipe of the pattern would droop if unblocked, then attempt to droop the 2 pattern pipe to where the elbow for the drooped blocking pipe was.
    If that droop move was blocked repeat this process.
    By dropping where the old elbow was we ensure that we create the chain of elbows required for the configuration, all that remains to check is that the flats witness elbows.
    Note that the 1 pattern pipe will have a flat below where the first blocking pipe drooped into and will have been drooped to create $\inlineare$ below an empty box right of the 2 pattern pipe, left of all the blocking pipes, and weakly above all the blocking pipes.
    Therefore, when drooped the 2 pattern pipe will create a flat left of the $\inlinejay$ made from the droop move and above the right-most $\inlineare$ of the 1 pattern pipe as once all the blocking pipes have been drooped there will be two empty boxes right of the 2 pattern pipe in the row the 2 pattern pipe droops into.
    Further, the flat from the 1 pattern pipe will be right of $\inlineare$ from the 1 pattern pipe and below $\inlineare$ from the first blocking pipe.
    Therefore, the BPD has the configuration.
\end{proof}

\begin{figure}[h!]
        \centering

        \begin{tikzpicture}[scale =.4]\node at (4,3) {\shade};\node at (1,5) {\shade};\grid{5}{5}\node at (0,5) {\nowire};\node at (1,5) {\are[Navy]};\node at (2,5) {\hwire[Navy]};\node at (3,5) {\hwire[Navy]};\node at (4,5) {\hwire[Navy]};\node at (5,5) {\hwire[Navy]};\node at (0,4) {\nowire};\node at (1,4) {\vwire[Navy]};\node at (2,4) {\nowire};\node at (3,4) {\are[Orange]};\node at (4,4) {\hwire[Orange]};\node at (5,4) {\hwire[Orange]};\node at (0,3) {\nowire};\node at (1,3) {\vwire[Navy]};\node at (2,3) {\nowire};\node at (3,3) {\vwire[Orange]};\node at (4,3) {\nowire};\node at (5,3) {\are[Navy]};\node at (0,2) {\nowire};\node at (1,2) {\vwire[Navy]};\node at (2,2) {\are[Navy]};\node at (3,2) {\hwire[Navy]};\node at (3,2) {\vwire[Orange]};\node at (4,2) {\hwire[Navy]};\node at (5,2) {\hwire[Navy]};\node at (5,2) {\vwire[Navy]};\node at (0,1) {\are[Navy]};\node at (1,1) {\hwire[Navy]};\node at (1,1) {\vwire[Navy]};\node at (2,1) {\jay[Navy]};\node at (3,1) {\vwire[Orange]};\node at (4,1) {\are[Navy]};\node at (5,1) {\hwire[Navy]};\node at (5,1) {\vwire[Navy]};\node at (0,0) {\vwire[Navy]};\node at (1,0) {\vwire[Navy]};\node at (2,0) {\are[Navy]};\node at (3,0) {\hwire[Navy]};\node at (3,0) {\vwire[Orange]};\node at (4,0) {\hwire[Navy]};\node at (4,0) {\vwire[Navy]};\node at (5,0) {\hwire[Navy]};\node at (5,0) {\vwire[Navy]};\draw[thick] (-0.5,-0.5) -- (-0.5,5.5) -- (5.5,5.5) -- (5.5,-0.5) -- (-0.5,-0.5);\node at (0,-1) {1};\node at (6,5) {2};\node at (1,-1) {2};\node at (6,4) {4};\node at (2,-1) {3};\node at (6,3) {6};\node at (3,-1) {4};\node at (6,2) {1};\node at (4,-1) {5};\node at (6,1) {5};\node at (5,-1) {6};\node at (6,0) {3};\end{tikzpicture}
        \begin{tikzpicture}[scale =.4]\node at (2,4) {\shade};\node at (2,2) {\shade};\node at (4,2) {\shade};\node at (4,3) {\shade};\node at (3,4) {\shade};\node at (4,4) {\shade};\grid{5}{5}\node at (0,5) {\nowire};\node at (1,5) {\nowire};\node at (2,5) {\nowire};\node at (3,5) {\are[Navy]};\node at (4,5) {\hwire[Navy]};\node at (5,5) {\hwire[Navy]};\node at (0,4) {\nowire};\node at (1,4) {\are[Navy]};\node at (2,4) {\hwire[Navy]};\node at (3,4) {\jay[Navy]};\node at (4,4) {\are[Orange]};\node at (5,4) {\hwire[Orange]};\node at (0,3) {\nowire};\node at (1,3) {\vwire[Navy]};\node at (2,3) {\nowire};\node at (3,3) {\are[Orange]};\node at (4,3) {\jay[Orange]};\node at (5,3) {\are[Navy]};\node at (0,2) {\nowire};\node at (1,2) {\vwire[Navy]};\node at (2,2) {\are[Navy]};\node at (3,2) {\hwire[Navy]};\node at (3,2) {\vwire[Orange]};\node at (4,2) {\hwire[Navy]};\node at (5,2) {\hwire[Navy]};\node at (5,2) {\vwire[Navy]};\node at (0,1) {\are[Navy]};\node at (1,1) {\hwire[Navy]};\node at (1,1) {\vwire[Navy]};\node at (2,1) {\jay[Navy]};\node at (3,1) {\vwire[Orange]};\node at (4,1) {\are[Navy]};\node at (5,1) {\hwire[Navy]};\node at (5,1) {\vwire[Navy]};\node at (0,0) {\vwire[Navy]};\node at (1,0) {\vwire[Navy]};\node at (2,0) {\are[Navy]};\node at (3,0) {\hwire[Navy]};\node at (3,0) {\vwire[Orange]};\node at (4,0) {\hwire[Navy]};\node at (4,0) {\vwire[Navy]};\node at (5,0) {\hwire[Navy]};\node at (5,0) {\vwire[Navy]};\draw[thick] (-0.5,-0.5) -- (-0.5,5.5) -- (5.5,5.5) -- (5.5,-0.5) -- (-0.5,-0.5);\node at (0,-1) {1};\node at (6,5) {2};\node at (1,-1) {2};\node at (6,4) {4};\node at (2,-1) {3};\node at (6,3) {6};\node at (3,-1) {4};\node at (6,2) {1};\node at (4,-1) {5};\node at (6,1) {5};\node at (5,-1) {6};\node at (6,0) {3};\end{tikzpicture}
        \begin{tikzpicture}[scale =.4]\node at (2,4) {\shade};\node at (4,2) {\shade};\grid{5}{5}\node at (0,5) {\vwire[Navy]};\node at (1,5) {\vwire[Orange]};\node at (2,5) {\vwire[Orange]};\node at (3,5) {\el[Navy]};\node at (4,5) {\hwire[Navy]};\node at (4,5) {\vwire[Navy]};\node at (5,5) {\hwire[Navy]};\node at (5,5) {\vwire[Navy]};\node at (0,4) {\vwire[Navy]};\node at (1,4) {\el[Orange]};\node at (2,4) {\hwire[Orange]};\node at (2,4) {\vwire[Orange]};\node at (3,4) {\en[Orange]};\node at (4,4) {\el[Navy]};\node at (5,4) {\hwire[Navy]};\node at (5,4) {\vwire[Navy]};\node at (0,3) {\vwire[Navy]};\node at (1,3) {\nowire};\node at (2,3) {\vwire[Orange]};\node at (3,3) {\el[Orange]};\node at (4,3) {\en[Orange]};\node at (5,3) {\el[Navy]};\node at (0,2) {\vwire[Navy]};\node at (1,2) {\nowire};\node at (2,2) {\el[Orange]};\node at (3,2) {\hwire[Orange]};\node at (4,2) {\newire[Orange]};\node at (4,2) {\swwire[Orange]};\node at (5,2) {\hwire[Orange]};\node at (0,1) {\el[Navy]};\node at (1,1) {\hwire[Navy]};\node at (2,1) {\en[Navy]};\node at (3,1) {\nowire};\node at (4,1) {\el[Orange]};\node at (5,1) {\hwire[Orange]};\node at (0,0) {\nowire};\node at (1,0) {\nowire};\node at (2,0) {\el[Navy]};\node at (3,0) {\hwire[Navy]};\node at (4,0) {\hwire[Navy]};\node at (5,0) {\hwire[Navy]};\draw[thick] (-0.5,-0.5) -- (-0.5,5.5) -- (5.5,5.5) -- (5.5,-0.5) -- (-0.5,-0.5);\node at (0,-1) {1};\node at (6,5) {4};\node at (1,-1) {2};\node at (6,4) {5};\node at (2,-1) {3};\node at (6,3) {6};\node at (3,-1) {4};\node at (6,2) {2};\node at (4,-1) {5};\node at (6,1) {3};\node at (5,-1) {6};\node at (6,0) {1};\end{tikzpicture}
        
        \caption{A $25143$ pattern with a $(1,3)$ blocking pipe.}
        \label{fig:12543Blocks}
\end{figure}
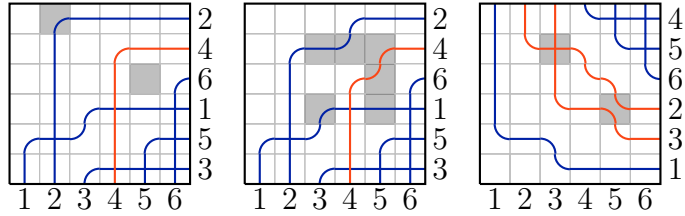

%\newpage

\begin{proposition}\label{p:241653}
    If $w$ contains a pattern $\pi=241653$, then there exists $B\in \BPD{w}$ such that co$(B)$ is non-reduced.
\end{proposition}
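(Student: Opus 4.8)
The plan is to follow the template of Propositions~\ref{p:13254} and \ref{p:25143}, since $241653$ is the longest pattern and, like $25143$, it corresponds to the case where the two witness pipes cross before creating the configuration. By Proposition~\ref{prop:patt} the Rothe BPD of $241653$, after the droop moves recorded in Appendix~\ref{a:droops}, already has a non-reduced co-BPD; by Lemma~\ref{lem:blocking} we are finished as soon as $w$ has no blocking pipes for those moves, so I may assume at least one blocking pipe is present, and all the work is in analyzing the blocking pipes.

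First I would enumerate the admissible blocking-pipe forms $(x,y)$: for each droop $(a,b)\ssearrow(c,d)$ of the base pattern a blocking pipe must satisfy $a\le x<c$ and $b\le y<d$, which for the droop moves of $241653$ leaves a short finite list of forms. I would then peel these off in the two ways used in the earlier proofs. A form whose position coincides with the entering band of one of the pattern pipes can be \emph{reselected} as that pattern pipe, strictly lowering the number of blocking pipes, so those cases close by induction on the blocking count (as with the $(1,1)$, $(2,2)$, and $(3,1)$ reselections before). A form that, together with an appropriate subset of the six pattern pipes $2,4,1,6,5,3$, already realizes one of $1423$, $13254$, $12543$, or $25143$ is handled by invoking Proposition~\ref{p:1423}, \ref{p:13254}, \ref{p:12543}, or \ref{p:25143}; this is the analogue of the reduction sending a $(3,2)$-blocking pipe to a $1423$ in Proposition~\ref{p:25143}, and I expect it to absorb every form but one ``stubborn'' one.

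For the remaining stubborn form I would give an explicit construction exactly as in Proposition~\ref{p:25143}: droop the stubborn blocking pipes one at a time, each into the cell where the pattern pipe it obstructs would otherwise have drooped, and then droop that pattern pipe into the elbow just vacated, repeating until all obstructions are cleared. Because every droop lands precisely on the previous elbow's cell, the chain of alternating $\inlineare$ and $\inlinejay$ elbows required by Lemma~\ref{l:config} is produced automatically; what then remains is to check that the two flats of the configuration still witness their elbows, i.e.\ that after all the stubborn pipes are drooped a pair of empty cells opens to the right of the relevant $\inlineare$, so that the descending witness pipe lays a $\inlinehwire$ directly above that $\inlineare$ and to the left of its new $\inlinejay$, and symmetrically at the top.

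The hard part will be the bookkeeping in this last step. Because $241653$ is the longest pattern, its configuration spans two levels of the elbow chain, so I must verify that clearing the stubborn blocking pipes neither prematurely fills a cell that one of the witnessing flats needs nor severs the two-level chain; and I must confirm that a stubborn pipe can never sit between the two witness pipes in a position that would instead manufacture a fresh $1423$ or $13254$ (which would in any case route back to the earlier propositions). Carrying out this verification exhaustively over every admissible $(x,y)$ form, and confirming that the two empty cells appear in the correct rows at the correct stage of the droop sequence, is the bulk of the remaining work.
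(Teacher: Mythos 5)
Your high-level strategy is the same as the paper's: start from Proposition~\ref{prop:patt} and Lemma~\ref{lem:blocking}, classify the admissible blocking-pipe forms, dispose of most by reselection or by reduction to an earlier proposition, and handle what survives by an explicit droop construction that rebuilds the configuration. However, the proposal stops at the level of a plan precisely where the paper's proof does its actual work, and two of your structural expectations are wrong, so following the plan literally would fail.

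Concretely, for the droops $(2,4)\ssearrow(4,5)$, $(1,2)\ssearrow(2,4)$, $(3,1)\ssearrow(4,3)$ the admissible forms are $(2,4)$, $(3,4)$, $(1,2)$, $(1,3)$, $(3,1)$, $(3,2)$, and your two peeling mechanisms do not cover them all. Reselection that \emph{strictly lowers the blocking count} handles $(1,2)$ and $(3,1)$, and reduction to an earlier proposition handles only $(3,2)$ (it gives a $1423$ with the $2$, $4$, $3$ pipes of $\pi$). The form $(2,4)$ fits neither mechanism: the paper reselects such a pipe as the $4$ of the pattern, which does \emph{not} lower the number of blocking pipes --- it converts the displaced $4$-pipe into a new $(1,3)$ blocker --- so your induction measure breaks there and must be replaced by induction on the number of $(2,4)$-form blockers. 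Second, your expectation of a single ``stubborn'' form is incorrect: two forms survive, $(1,3)$ and $(3,4)$, and they obstruct \emph{different} droops of the base sequence. Consequently the final construction cannot be the one-family loop you describe; it must be staged exactly as in the paper --- first clear the $(3,4)$ blockers against the $4$-pipe droop, then the $(1,3)$ blockers against the $2$-pipe droop, and only then droop the $1$-pipe --- and the verification that the flats of the $1$- and $2$-pipes witness the chain of elbows (two empty boxes opening in the row where the $2$-pipe droops, the $1$-pipe's flat sitting below the first $(1,3)$ blocker's $\inlineare$) depends on that ordering. Since the enumeration, the $(2,4)$ anomaly, the two-stage construction, and the witness check are exactly what you defer as ``the bulk of the remaining work,'' the proposal as written has a genuine gap: it is a correct outline of the paper's approach, but the steps it omits are the ones that constitute the proof.
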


\begin{proof}
    Note that the BPD made from Rothe diagram for $25143$ by the droop moves $(2,4) \ssearrow (4,5)$, $(1,2)\ssearrow (2,4)$, and $(3,1)\ssearrow (4,3)$ has a non-reduced co-BPD.
    We will assume that $w$ contains blocking pipes as otherwise the result is true by Lemma~\ref{lem:blocking}.
    If there is a blocking pipe of the form $(3,2)$ then the blocking pipe forms a $1423$ pattern with the $2$, $4$, and $3$ pipes form $\pi$, thus by Proposition~\ref{p:1423} we have our result.
    If there is a blocking pipe of the form $(1,2)$ take that pipe as the $2$ in the pattern instead to get the pattern to have one less blocking pipe.
    If there is a blocking pipe of the form $(3,1)$ take that pipe as the $1$ in the pattern instead to get the pattern to have one less blocking pipe.
    If there is a blocking pipe of the form $(2,4)$ take the blocking pipe to instead be the $4$ pipe in the pattern and make the old $4$ pipe a $(1,3)$ blocking pipe instead.
    Thus, we can assume all the blocking pipes are of the form $(1,3)$ or $(3,4)$.
    
    From such a Rothe BPD we can get a BPD with non-reduced co-BPD by drooping the right-most $(3,4)$ blocking pipe into where the 4 pattern pipe of the pattern would droop if unblocked, then attempt to droop the 4 pattern pipe to where the elbow for the drooped $(3,4)$ blocking pipe was.
    If that droop move was blocked repeat this process.
    Then we can droop the right-most $(1,3)$ blocking pipe into where the elbow of the $4$ pipe was and then attempt to droop the $2$ pattern pipe into where the elbow of the $(1,3)$ blocking pipe was.
    If that droop move was blocked repeat this process.
    Finally, the $1$ pattern pipe can be drooped as normal.
    Note that by dropping where the old elbow was we ensure that we create the chain of elbows required for the configuration, all that remains to check is that the flats witness elbows.
    The 1 pattern pipe will have a flat below where the first $(1,3)$ blocking pipe drooped into and will have been drooped to create $\inlineare$ below an empty box right of the 2 pattern pipe, left of all the blocking pipes, and weakly above all the blocking pipes.
    Therefore, when drooped the 2 pattern pipe will create a flat left of the $\inlinejay$ made from the droop move and above the right-most $\inlineare$ of the 1 pattern pipe as once all the blocking pipes have been drooped there will be two empty boxes right of the 2 pattern pipe in the row the 2 pattern pipe droops into.
    Further, the flat from the 1 pattern pipe will be right of $\inlineare$ from the 1 pattern pipe and below $\inlineare$ from the first $(1,3)$ blocking pipe.
    Therefore, the BPD has the configuration.
\end{proof}

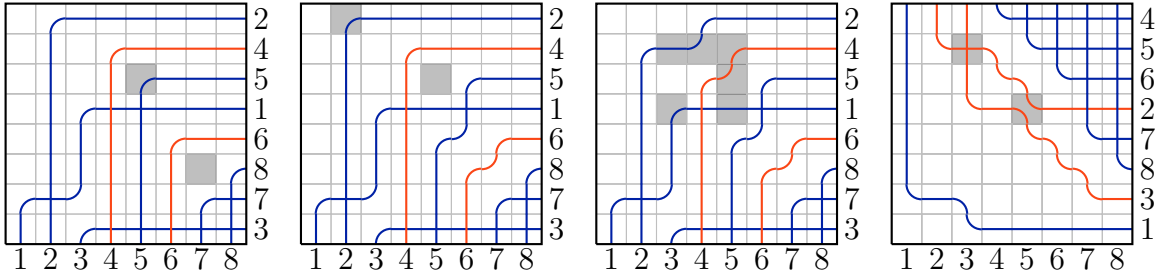
\begin{figure}[h!]
        \centering

        \begin{tikzpicture}[scale =.4]\node at (4,5) {\shade};\node at (6,2) {\shade};\grid{7}{7}\node at (0,7) {\nowire};\node at (1,7) {\are[Navy]};\node at (2,7) {\hwire[Navy]};\node at (3,7) {\hwire[Navy]};\node at (4,7) {\hwire[Navy]};\node at (5,7) {\hwire[Navy]};\node at (6,7) {\hwire[Navy]};\node at (7,7) {\hwire[Navy]};\node at (0,6) {\nowire};\node at (1,6) {\vwire[Navy]};\node at (2,6) {\nowire};\node at (3,6) {\are[Orange]};\node at (4,6) {\hwire[Orange]};\node at (5,6) {\hwire[Orange]};\node at (6,6) {\hwire[Orange]};\node at (7,6) {\hwire[Orange]};\node at (0,5) {\nowire};\node at (1,5) {\vwire[Navy]};\node at (2,5) {\nowire};\node at (3,5) {\vwire[Orange]};\node at (4,5) {\are[Navy]};\node at (5,5) {\hwire[Navy]};\node at (6,5) {\hwire[Navy]};\node at (7,5) {\hwire[Navy]};\node at (0,4) {\nowire};\node at (1,4) {\vwire[Navy]};\node at (2,4) {\are[Navy]};\node at (3,4) {\hwire[Navy]};\node at (3,4) {\vwire[Orange]};\node at (4,4) {\hwire[Navy]};\node at (4,4) {\vwire[Navy]};\node at (5,4) {\hwire[Navy]};\node at (6,4) {\hwire[Navy]};\node at (7,4) {\hwire[Navy]};\node at (0,3) {\nowire};\node at (1,3) {\vwire[Navy]};\node at (2,3) {\vwire[Navy]};\node at (3,3) {\vwire[Orange]};\node at (4,3) {\vwire[Navy]};\node at (5,3) {\are[Orange]};\node at (6,3) {\hwire[Orange]};\node at (7,3) {\hwire[Orange]};\node at (0,2) {\nowire};\node at (1,2) {\vwire[Navy]};\node at (2,2) {\vwire[Navy]};\node at (3,2) {\vwire[Orange]};\node at (4,2) {\vwire[Navy]};\node at (5,2) {\vwire[Orange]};\node at (6,2) {\nowire};\node at (7,2) {\are[Navy]};\node at (0,1) {\are[Navy]};\node at (1,1) {\hwire[Navy]};\node at (1,1) {\vwire[Navy]};\node at (2,1) {\jay[Navy]};\node at (3,1) {\vwire[Orange]};\node at (4,1) {\vwire[Navy]};\node at (5,1) {\vwire[Orange]};\node at (6,1) {\are[Navy]};\node at (7,1) {\hwire[Navy]};\node at (7,1) {\vwire[Navy]};\node at (0,0) {\vwire[Navy]};\node at (1,0) {\vwire[Navy]};\node at (2,0) {\are[Navy]};\node at (3,0) {\hwire[Navy]};\node at (3,0) {\vwire[Orange]};\node at (4,0) {\hwire[Navy]};\node at (4,0) {\vwire[Navy]};\node at (5,0) {\hwire[Navy]};\node at (5,0) {\vwire[Orange]};\node at (6,0) {\hwire[Navy]};\node at (6,0) {\vwire[Navy]};\node at (7,0) {\hwire[Navy]};\node at (7,0) {\vwire[Navy]};\draw[thick] (-0.5,-0.5) -- (-0.5,7.5) -- (7.5,7.5) -- (7.5,-0.5) -- (-0.5,-0.5);\node at (0,-1) {1};\node at (8,7) {2};\node at (1,-1) {2};\node at (8,6) {4};\node at (2,-1) {3};\node at (8,5) {5};\node at (3,-1) {4};\node at (8,4) {1};\node at (4,-1) {5};\node at (8,3) {6};\node at (5,-1) {6};\node at (8,2) {8};\node at (6,-1) {7};\node at (8,1) {7};\node at (7,-1) {8};\node at (8,0) {3};\end{tikzpicture}
        \begin{tikzpicture}[scale =.4]\node at (1,7) {\shade};\node at (4,5) {\shade};\grid{7}{7}\node at (0,7) {\nowire};\node at (1,7) {\are[Navy]};\node at (2,7) {\hwire[Navy]};\node at (3,7) {\hwire[Navy]};\node at (4,7) {\hwire[Navy]};\node at (5,7) {\hwire[Navy]};\node at (6,7) {\hwire[Navy]};\node at (7,7) {\hwire[Navy]};\node at (0,6) {\nowire};\node at (1,6) {\vwire[Navy]};\node at (2,6) {\nowire};\node at (3,6) {\are[Orange]};\node at (4,6) {\hwire[Orange]};\node at (5,6) {\hwire[Orange]};\node at (6,6) {\hwire[Orange]};\node at (7,6) {\hwire[Orange]};\node at (0,5) {\nowire};\node at (1,5) {\vwire[Navy]};\node at (2,5) {\nowire};\node at (3,5) {\vwire[Orange]};\node at (4,5) {\nowire};\node at (5,5) {\are[Navy]};\node at (6,5) {\hwire[Navy]};\node at (7,5) {\hwire[Navy]};\node at (0,4) {\nowire};\node at (1,4) {\vwire[Navy]};\node at (2,4) {\are[Navy]};\node at (3,4) {\hwire[Navy]};\node at (3,4) {\vwire[Orange]};\node at (4,4) {\hwire[Navy]};\node at (5,4) {\hwire[Navy]};\node at (5,4) {\vwire[Navy]};\node at (6,4) {\hwire[Navy]};\node at (7,4) {\hwire[Navy]};\node at (0,3) {\nowire};\node at (1,3) {\vwire[Navy]};\node at (2,3) {\vwire[Navy]};\node at (3,3) {\vwire[Orange]};\node at (4,3) {\are[Navy]};\node at (5,3) {\jay[Navy]};\node at (6,3) {\are[Orange]};\node at (7,3) {\hwire[Orange]};\node at (0,2) {\nowire};\node at (1,2) {\vwire[Navy]};\node at (2,2) {\vwire[Navy]};\node at (3,2) {\vwire[Orange]};\node at (4,2) {\vwire[Navy]};\node at (5,2) {\are[Orange]};\node at (6,2) {\jay[Orange]};\node at (7,2) {\are[Navy]};\node at (0,1) {\are[Navy]};\node at (1,1) {\hwire[Navy]};\node at (1,1) {\vwire[Navy]};\node at (2,1) {\jay[Navy]};\node at (3,1) {\vwire[Orange]};\node at (4,1) {\vwire[Navy]};\node at (5,1) {\vwire[Orange]};\node at (6,1) {\are[Navy]};\node at (7,1) {\hwire[Navy]};\node at (7,1) {\vwire[Navy]};\node at (0,0) {\vwire[Navy]};\node at (1,0) {\vwire[Navy]};\node at (2,0) {\are[Navy]};\node at (3,0) {\hwire[Navy]};\node at (3,0) {\vwire[Orange]};\node at (4,0) {\hwire[Navy]};\node at (4,0) {\vwire[Navy]};\node at (5,0) {\hwire[Navy]};\node at (5,0) {\vwire[Orange]};\node at (6,0) {\hwire[Navy]};\node at (6,0) {\vwire[Navy]};\node at (7,0) {\hwire[Navy]};\node at (7,0) {\vwire[Navy]};\draw[thick] (-0.5,-0.5) -- (-0.5,7.5) -- (7.5,7.5) -- (7.5,-0.5) -- (-0.5,-0.5);\node at (0,-1) {1};\node at (8,7) {2};\node at (1,-1) {2};\node at (8,6) {4};\node at (2,-1) {3};\node at (8,5) {5};\node at (3,-1) {4};\node at (8,4) {1};\node at (4,-1) {5};\node at (8,3) {6};\node at (5,-1) {6};\node at (8,2) {8};\node at (6,-1) {7};\node at (8,1) {7};\node at (7,-1) {8};\node at (8,0) {3};\end{tikzpicture}
        \begin{tikzpicture}[scale =.4]\node at (2,6) {\shade};\node at (2,4) {\shade};\node at (4,4) {\shade};\node at (4,5) {\shade};\node at (3,6) {\shade};\node at (4,6) {\shade};\grid{7}{7}\node at (0,7) {\nowire};\node at (1,7) {\nowire};\node at (2,7) {\nowire};\node at (3,7) {\are[Navy]};\node at (4,7) {\hwire[Navy]};\node at (5,7) {\hwire[Navy]};\node at (6,7) {\hwire[Navy]};\node at (7,7) {\hwire[Navy]};\node at (0,6) {\nowire};\node at (1,6) {\are[Navy]};\node at (2,6) {\hwire[Navy]};\node at (3,6) {\jay[Navy]};\node at (4,6) {\are[Orange]};\node at (5,6) {\hwire[Orange]};\node at (6,6) {\hwire[Orange]};\node at (7,6) {\hwire[Orange]};\node at (0,5) {\nowire};\node at (1,5) {\vwire[Navy]};\node at (2,5) {\nowire};\node at (3,5) {\are[Orange]};\node at (4,5) {\jay[Orange]};\node at (5,5) {\are[Navy]};\node at (6,5) {\hwire[Navy]};\node at (7,5) {\hwire[Navy]};\node at (0,4) {\nowire};\node at (1,4) {\vwire[Navy]};\node at (2,4) {\are[Navy]};\node at (3,4) {\hwire[Navy]};\node at (3,4) {\vwire[Orange]};\node at (4,4) {\hwire[Navy]};\node at (5,4) {\hwire[Navy]};\node at (5,4) {\vwire[Navy]};\node at (6,4) {\hwire[Navy]};\node at (7,4) {\hwire[Navy]};\node at (0,3) {\nowire};\node at (1,3) {\vwire[Navy]};\node at (2,3) {\vwire[Navy]};\node at (3,3) {\vwire[Orange]};\node at (4,3) {\are[Navy]};\node at (5,3) {\jay[Navy]};\node at (6,3) {\are[Orange]};\node at (7,3) {\hwire[Orange]};\node at (0,2) {\nowire};\node at (1,2) {\vwire[Navy]};\node at (2,2) {\vwire[Navy]};\node at (3,2) {\vwire[Orange]};\node at (4,2) {\vwire[Navy]};\node at (5,2) {\are[Orange]};\node at (6,2) {\jay[Orange]};\node at (7,2) {\are[Navy]};\node at (0,1) {\are[Navy]};\node at (1,1) {\hwire[Navy]};\node at (1,1) {\vwire[Navy]};\node at (2,1) {\jay[Navy]};\node at (3,1) {\vwire[Orange]};\node at (4,1) {\vwire[Navy]};\node at (5,1) {\vwire[Orange]};\node at (6,1) {\are[Navy]};\node at (7,1) {\hwire[Navy]};\node at (7,1) {\vwire[Navy]};\node at (0,0) {\vwire[Navy]};\node at (1,0) {\vwire[Navy]};\node at (2,0) {\are[Navy]};\node at (3,0) {\hwire[Navy]};\node at (3,0) {\vwire[Orange]};\node at (4,0) {\hwire[Navy]};\node at (4,0) {\vwire[Navy]};\node at (5,0) {\hwire[Navy]};\node at (5,0) {\vwire[Orange]};\node at (6,0) {\hwire[Navy]};\node at (6,0) {\vwire[Navy]};\node at (7,0) {\hwire[Navy]};\node at (7,0) {\vwire[Navy]};\draw[thick] (-0.5,-0.5) -- (-0.5,7.5) -- (7.5,7.5) -- (7.5,-0.5) -- (-0.5,-0.5);\node at (0,-1) {1};\node at (8,7) {2};\node at (1,-1) {2};\node at (8,6) {4};\node at (2,-1) {3};\node at (8,5) {5};\node at (3,-1) {4};\node at (8,4) {1};\node at (4,-1) {5};\node at (8,3) {6};\node at (5,-1) {6};\node at (8,2) {8};\node at (6,-1) {7};\node at (8,1) {7};\node at (7,-1) {8};\node at (8,0) {3};\end{tikzpicture}
        \begin{tikzpicture}[scale =.4]\node at (2,6) {\shade};\node at (4,4) {\shade};\grid{7}{7}\node at (0,7) {\vwire[Navy]};\node at (1,7) {\vwire[Orange]};\node at (2,7) {\vwire[Orange]};\node at (3,7) {\el[Navy]};\node at (4,7) {\hwire[Navy]};\node at (4,7) {\vwire[Navy]};\node at (5,7) {\hwire[Navy]};\node at (5,7) {\vwire[Navy]};\node at (6,7) {\hwire[Navy]};\node at (6,7) {\vwire[Navy]};\node at (7,7) {\hwire[Navy]};\node at (7,7) {\vwire[Navy]};\node at (0,6) {\vwire[Navy]};\node at (1,6) {\el[Orange]};\node at (2,6) {\hwire[Orange]};\node at (2,6) {\vwire[Orange]};\node at (3,6) {\en[Orange]};\node at (4,6) {\el[Navy]};\node at (5,6) {\hwire[Navy]};\node at (5,6) {\vwire[Navy]};\node at (6,6) {\hwire[Navy]};\node at (6,6) {\vwire[Navy]};\node at (7,6) {\hwire[Navy]};\node at (7,6) {\vwire[Navy]};\node at (0,5) {\vwire[Navy]};\node at (1,5) {\nowire};\node at (2,5) {\vwire[Orange]};\node at (3,5) {\el[Orange]};\node at (4,5) {\en[Orange]};\node at (5,5) {\el[Navy]};\node at (6,5) {\hwire[Navy]};\node at (6,5) {\vwire[Navy]};\node at (7,5) {\hwire[Navy]};\node at (7,5) {\vwire[Navy]};\node at (0,4) {\vwire[Navy]};\node at (1,4) {\nowire};\node at (2,4) {\el[Orange]};\node at (3,4) {\hwire[Orange]};\node at (4,4) {\newire[Orange]};\node at (4,4) {\swwire[Orange]};\node at (5,4) {\hwire[Orange]};\node at (6,4) {\hwire[Orange]};\node at (6,4) {\vwire[Navy]};\node at (7,4) {\hwire[Orange]};\node at (7,4) {\vwire[Navy]};\node at (0,3) {\vwire[Navy]};\node at (1,3) {\nowire};\node at (2,3) {\nowire};\node at (3,3) {\nowire};\node at (4,3) {\el[Orange]};\node at (5,3) {\en[Orange]};\node at (6,3) {\el[Navy]};\node at (7,3) {\hwire[Navy]};\node at (7,3) {\vwire[Navy]};\node at (0,2) {\vwire[Navy]};\node at (1,2) {\nowire};\node at (2,2) {\nowire};\node at (3,2) {\nowire};\node at (4,2) {\nowire};\node at (5,2) {\el[Orange]};\node at (6,2) {\en[Orange]};\node at (7,2) {\el[Navy]};\node at (0,1) {\el[Navy]};\node at (1,1) {\hwire[Navy]};\node at (2,1) {\en[Navy]};\node at (3,1) {\nowire};\node at (4,1) {\nowire};\node at (5,1) {\nowire};\node at (6,1) {\el[Orange]};\node at (7,1) {\hwire[Orange]};\node at (0,0) {\nowire};\node at (1,0) {\nowire};\node at (2,0) {\el[Navy]};\node at (3,0) {\hwire[Navy]};\node at (4,0) {\hwire[Navy]};\node at (5,0) {\hwire[Navy]};\node at (6,0) {\hwire[Navy]};\node at (7,0) {\hwire[Navy]};\draw[thick] (-0.5,-0.5) -- (-0.5,7.5) -- (7.5,7.5) -- (7.5,-0.5) -- (-0.5,-0.5);\node at (0,-1) {1};\node at (8,7) {4};\node at (1,-1) {2};\node at (8,6) {5};\node at (2,-1) {3};\node at (8,5) {6};\node at (3,-1) {4};\node at (8,4) {2};\node at (4,-1) {5};\node at (8,3) {7};\node at (5,-1) {6};\node at (8,2) {8};\node at (6,-1) {7};\node at (8,1) {3};\node at (7,-1) {8};\node at (8,0) {1};\end{tikzpicture}
        
        \caption{A $241653$ pattern with $(1,3)$ and $(3,4)$ blocking pipes.}
        \label{fig:241653Blocks}
\end{figure}

\begin{proposition}\label{p:216543}
    If $w$ contains a pattern $\pi=216543$, then there exists $B\in \BPD{w}$ such that co$(B)$ is non-reduced.
\end{proposition}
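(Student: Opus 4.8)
The plan is to follow the template of Propositions~\ref{p:1423}--\ref{p:241653}. By Proposition~\ref{prop:patt}, the Rothe BPD of $216543$ admits a sequence of droops (listed explicitly in Appendix~\ref{a:droops}) producing a $B$ with $\co{B}$ non-reduced; in the resulting configuration the witness pipes are the ``$1$ pattern pipe'' $p$ and the ``$2$ pattern pipe'' $q$. Since the initial block $21$ of the pattern is an inversion, $p$ and $q$ cross, and $p$ carries the flats-and-$\inlinejay$ staircase produced by the descending block $6543$. If $w$ contains $216543$ and has no blocking pipes for this droop sequence, then Lemma~\ref{lem:blocking} already produces a $B \in \BPD{w}$ with $\co{B}$ non-reduced, so from here on I would assume that blocking pipes are present.

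The first step is to classify each blocking pipe by its type $(x,y)$ and to reduce the list to a small set of unavoidable types, exactly as in the proofs of Propositions~\ref{p:25143} and~\ref{p:241653}. Only finitely many types can occur, since a blocking pipe for a droop $(a,b)\ssearrow(c,d)$ satisfies $a\le x<c$ and $b\le y<d$. I expect every type to fall into one of three buckets: (i) types whose cell coincides with the role of a pattern pipe, which I reabsorb by taking the blocking pipe to \emph{be} that pattern pipe (the $1$, the $2$, or one of the staircase pipes $6,5,4,3$), strictly lowering the number of blocking pipes; (ii) types for which the blocking pipe together with an appropriate subset of the $216543$ pipes already realizes a shorter pattern of $\Pi$, so that the matching Proposition~\ref{p:1423}, \ref{p:13254}, \ref{p:12543}, \ref{p:25143}, or~\ref{p:241653} finishes the argument; and (iii) a small residue of interior types that cannot be removed and must be drooped around by hand.

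For the interior types the construction would mimic Proposition~\ref{p:241653}: process the surviving blocking pipes from the outside in, at each stage drooping the extreme remaining blocking pipe into the cell that the relevant pattern pipe wants to occupy and then drooping that pattern pipe into the elbow just vacated. Always drooping into the position of the old elbow manufactures precisely the chain of alternating $\inlineare$/$\inlinejay$ elbows required by Lemma~\ref{l:config}; what then remains is to confirm that the top flat of $p$ still witnesses an $\inlineare$ beneath it and that the bottom flat of $q$ lies directly beneath a $\inlinejay$. As in the earlier propositions, this holds because once all blocking pipes have drooped there are two empty cells to the right of $q$'s turning point in the row into which $q$ finally droops, forcing the required flat. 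Lemma~\ref{l:config} then certifies that $\co{B}$ is non-reduced.

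The main obstacle, as in the preceding propositions, is the bookkeeping of step one together with the verification of step three. I must check that the buckets are exhaustive, that each reabsorption genuinely lowers the blocking count without disturbing the rest of the configuration, and --- most delicately --- that every iterated droop in step three is unblocked, or re-blocked only by a pipe of an already-handled type, while the alternating-elbow chain and the flat-witnessing conditions of Lemma~\ref{l:config} survive each stage. Because $216543$ has the longest descending tail of the seven patterns, its staircase contributes the most interior types, and I expect this to be the most involved of the case analyses in Section~\ref{s:ness}.
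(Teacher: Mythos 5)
Your high-level plan matches the skeleton of the paper's proof: invoke Lemma~\ref{lem:blocking} when no blocking pipes exist, classify blocking pipes by type, reabsorb some as pattern pipes, reduce others to earlier propositions, and droop around the rest. But the proposal defers exactly the steps where the content of this case lives, and one of those steps is not an instance of your template --- it needs an idea your plan does not contain. The paper's classification (type $(1,3)$ reduces to $241653$ via Proposition~\ref{p:241653}; types $(3,2)$ and $(3,3)$ reduce to $1423$ via Proposition~\ref{p:1423}; types $(1,2)$ and $(2,1)$ are reabsorbed as the $2$ and the $1$) leaves residual types $(2,2)$ and $(2,3)$, and for these, single-pipe analysis is not enough: one must analyze \emph{pairs} of residual blocking pipes and their relative order. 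Two $(2,2)$ pipes in increasing order force a $12543$ pattern; two in decreasing order can replace the $21$ of $\pi$; two $(2,3)$ pipes in decreasing order force a $13254$; and a $(2,3)$ appearing before a $(2,2)$ forces a $1423$. Only after these reductions may one assume the surviving blocking pipes appear in increasing order with at most one $(2,2)$, and only for such arrangements does a droop construction exist. Your outside-in recipe (``droop the extreme blocking pipe into the cell the pattern pipe wants, then droop the pattern pipe into the vacated elbow'') would simply fail on, say, two $(2,3)$ blocking pipes in decreasing order: no sequence of droops around them produces the configuration, and the paper escapes by pattern containment, not by drooping.

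Second, the terminal construction for $216543$ is not the alternating-chain mechanism you import from Proposition~\ref{p:241653}. Because both witness pipes (the $1$ and the $2$ of the pattern, which cross) must end up bounding a common region, the paper instead droops the $(2,3)$ pipes so as to shepherd a vertical pair of empty boxes toward the column of empty boxes above the $3$ pattern pipe, assembling a two-by-two block of empty cells; the optional $(2,2)$ pipe, then the $2$ pattern pipe, then the $1$ pattern pipe droop into this block, producing the basic corner instance of the configuration of Lemma~\ref{l:config} rather than a long chain of alternating elbows. So while your buckets (i)--(iii) are the right skeleton, the proof of this proposition stands or falls on the pairwise order analysis and the two-by-two assembly, both of which are absent from the proposal; as written, it is a plan that flags these as ``obstacles'' rather than a proof that resolves them.
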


\begin{proof}
    Note that the BPD made from Rothe diagram for $216543$ by the droop moves $(1,2) \ssearrow (4,4)$ and $(2,1)\ssearrow (3,3)$ has a non-reduced co-BPD.
    We will assume that $w$ contains blocking pipes as otherwise the result is true by Lemma~\ref{lem:blocking}.
    If there is a blocking pipe of the form $(1,3)$ then the blocking pipe forms a $241653$ pattern with the $2$, $1$, $6$, $5$, and $3$ pipes form $\pi$, thus by Proposition~\ref{p:241653} we have our result.
    If there is a blocking pipe of the form $(3,2)$ or $(3,3)$ then the blocking pipe forms a $1423$ pattern with the $1$, $6$, and $4$ pipes form $\pi$, thus by Proposition~\ref{p:1423} we have our result.
    If there is a blocking pipe of the form $(1,2)$ take that pipe as the $2$ in the pattern instead to get the pattern to have one less blocking pipe.
    If there is a blocking pipe of the form $(2,1)$ take that pipe as the $1$ in the pattern instead to get the pattern to have one less blocking pipe.
    Thus, we can assume all the blocking pipes are of the form $(2,2)$ or $(2,3)$.
    
    Suppose two $(2,2)$ blocking pipes appear in $w$ in increasing order, then $w$ has $12543$ pattern by taking the two blocking pipes and the $6$, $4$, and $3$ pipes from $\pi$.
    Suppose two $(2,2)$ blocking pipes appear in $w$ in decreasing order, then $w$ has $216543$ pattern with two less blocking pipes by replacing the $21$ of $\pi$ with the two $(2,2)$ blocking pipes.
    Suppose two $(2,3)$ blocking pipes appear in $w$ in decreasing order, then $w$ has a $13254$ pattern by taking the $2$ pipe from $\pi$, the two $(2,3)$ blocking pipes, and the $6$ and $4$ pipes from $\pi$.
    Suppose a $(2,3)$ blocking pipe appears before a $(2,2)$ blocking pipe in $w$, then $w$ has a $1423$ pattern by taking the $1$ from the $2$ pipe from $\pi$, the two blocking pipes, and the $3$ pipe from $\pi$.
    Therefore, we must have the blocking pipes appear in increasing order in $w$ and have at most one $(2,2)$ blocking pipe.
    First droop the right-most $(2,3)$ blocking pipe into where the $2$ pipe wants to droop.
    Then if there any other $(2,3)$ blocking pipes droop them into the bottom-most box between it and the last drooped pipe.
    This process will eventually lead to a two by two group of empty boxes (maybe broken up by non-blocking pipes) as each droop move moves a vertical group of two empty boxes towards the column of empty boxes above the $3$ pattern pipe.
    If there is a $(2,2)$ blocking pipe droop into the bottom right corner of this group.
    Then the $2$ pattern pipe can droop into the bottom right corner and the $1$ pattern pipe can droop into where the top left corner was to form the configuration.
\end{proof}

\begin{figure}[h!]
        \centering

        \begin{tikzpicture}[scale =.4]\node at (1,7) {\shade};\node at (5,2) {\shade};\grid{7}{7}\node at (0,7) {\nowire};\node at (1,7) {\are[Navy]};\node at (2,7) {\hwire[Navy]};\node at (3,7) {\hwire[Navy]};\node at (4,7) {\hwire[Navy]};\node at (5,7) {\hwire[Navy]};\node at (6,7) {\hwire[Navy]};\node at (7,7) {\hwire[Navy]};\node at (0,6) {\are[Navy]};\node at (1,6) {\hwire[Navy]};\node at (1,6) {\vwire[Navy]};\node at (2,6) {\hwire[Navy]};\node at (3,6) {\hwire[Navy]};\node at (4,6) {\hwire[Navy]};\node at (5,6) {\hwire[Navy]};\node at (6,6) {\hwire[Navy]};\node at (7,6) {\hwire[Navy]};\node at (0,5) {\vwire[Navy]};\node at (1,5) {\vwire[Navy]};\node at (2,5) {\nowire};\node at (3,5) {\nowire};\node at (4,5) {\are[Orange]};\node at (5,5) {\hwire[Orange]};\node at (6,5) {\hwire[Orange]};\node at (7,5) {\hwire[Orange]};\node at (0,4) {\vwire[Navy]};\node at (1,4) {\vwire[Navy]};\node at (2,4) {\are[Orange]};\node at (3,4) {\hwire[Orange]};\node at (4,4) {\hwire[Orange]};\node at (4,4) {\vwire[Orange]};\node at (5,4) {\hwire[Orange]};\node at (6,4) {\hwire[Orange]};\node at (7,4) {\hwire[Orange]};\node at (0,3) {\vwire[Navy]};\node at (1,3) {\vwire[Navy]};\node at (2,3) {\vwire[Orange]};\node at (3,3) {\nowire};\node at (4,3) {\vwire[Orange]};\node at (5,3) {\nowire};\node at (6,3) {\nowire};\node at (7,3) {\are[Navy]};\node at (0,2) {\vwire[Navy]};\node at (1,2) {\vwire[Navy]};\node at (2,2) {\vwire[Orange]};\node at (3,2) {\nowire};\node at (4,2) {\vwire[Orange]};\node at (5,2) {\nowire};\node at (6,2) {\are[Navy]};\node at (7,2) {\hwire[Navy]};\node at (7,2) {\vwire[Navy]};\node at (0,1) {\vwire[Navy]};\node at (1,1) {\vwire[Navy]};\node at (2,1) {\vwire[Orange]};\node at (3,1) {\nowire};\node at (4,1) {\vwire[Orange]};\node at (5,1) {\are[Navy]};\node at (6,1) {\hwire[Navy]};\node at (6,1) {\vwire[Navy]};\node at (7,1) {\hwire[Navy]};\node at (7,1) {\vwire[Navy]};\node at (0,0) {\vwire[Navy]};\node at (1,0) {\vwire[Navy]};\node at (2,0) {\vwire[Orange]};\node at (3,0) {\are[Navy]};\node at (4,0) {\hwire[Navy]};\node at (4,0) {\vwire[Orange]};\node at (5,0) {\hwire[Navy]};\node at (5,0) {\vwire[Navy]};\node at (6,0) {\hwire[Navy]};\node at (6,0) {\vwire[Navy]};\node at (7,0) {\hwire[Navy]};\node at (7,0) {\vwire[Navy]};\draw[thick] (-0.5,-0.5) -- (-0.5,7.5) -- (7.5,7.5) -- (7.5,-0.5) -- (-0.5,-0.5);\node at (0,-1) {1};\node at (8,7) {2};\node at (1,-1) {2};\node at (8,6) {1};\node at (2,-1) {3};\node at (8,5) {5};\node at (3,-1) {4};\node at (8,4) {3};\node at (4,-1) {5};\node at (8,3) {8};\node at (5,-1) {6};\node at (8,2) {7};\node at (6,-1) {7};\node at (8,1) {6};\node at (7,-1) {8};\node at (8,0) {4};\end{tikzpicture}
        \begin{tikzpicture}[scale =.4]\node at (1,7) {\shade};\node at (3,4) {\shade};\grid{7}{7}\node at (0,7) {\nowire};\node at (1,7) {\are[Navy]};\node at (2,7) {\hwire[Navy]};\node at (3,7) {\hwire[Navy]};\node at (4,7) {\hwire[Navy]};\node at (5,7) {\hwire[Navy]};\node at (6,7) {\hwire[Navy]};\node at (7,7) {\hwire[Navy]};\node at (0,6) {\are[Navy]};\node at (1,6) {\hwire[Navy]};\node at (1,6) {\vwire[Navy]};\node at (2,6) {\hwire[Navy]};\node at (3,6) {\hwire[Navy]};\node at (4,6) {\hwire[Navy]};\node at (5,6) {\hwire[Navy]};\node at (6,6) {\hwire[Navy]};\node at (7,6) {\hwire[Navy]};\node at (0,5) {\vwire[Navy]};\node at (1,5) {\vwire[Navy]};\node at (2,5) {\nowire};\node at (3,5) {\nowire};\node at (4,5) {\nowire};\node at (5,5) {\are[Orange]};\node at (6,5) {\hwire[Orange]};\node at (7,5) {\hwire[Orange]};\node at (0,4) {\vwire[Navy]};\node at (1,4) {\vwire[Navy]};\node at (2,4) {\nowire};\node at (3,4) {\nowire};\node at (4,4) {\are[Orange]};\node at (5,4) {\hwire[Orange]};\node at (5,4) {\vwire[Orange]};\node at (6,4) {\hwire[Orange]};\node at (7,4) {\hwire[Orange]};\node at (0,3) {\vwire[Navy]};\node at (1,3) {\vwire[Navy]};\node at (2,3) {\are[Orange]};\node at (3,3) {\hwire[Orange]};\node at (4,3) {\jay[Orange]};\node at (5,3) {\vwire[Orange]};\node at (6,3) {\nowire};\node at (7,3) {\are[Navy]};\node at (0,2) {\vwire[Navy]};\node at (1,2) {\vwire[Navy]};\node at (2,2) {\vwire[Orange]};\node at (3,2) {\nowire};\node at (4,2) {\are[Orange]};\node at (5,2) {\jay[Orange]};\node at (6,2) {\are[Navy]};\node at (7,2) {\hwire[Navy]};\node at (7,2) {\vwire[Navy]};\node at (0,1) {\vwire[Navy]};\node at (1,1) {\vwire[Navy]};\node at (2,1) {\vwire[Orange]};\node at (3,1) {\nowire};\node at (4,1) {\vwire[Orange]};\node at (5,1) {\are[Navy]};\node at (6,1) {\hwire[Navy]};\node at (6,1) {\vwire[Navy]};\node at (7,1) {\hwire[Navy]};\node at (7,1) {\vwire[Navy]};\node at (0,0) {\vwire[Navy]};\node at (1,0) {\vwire[Navy]};\node at (2,0) {\vwire[Orange]};\node at (3,0) {\are[Navy]};\node at (4,0) {\hwire[Navy]};\node at (4,0) {\vwire[Orange]};\node at (5,0) {\hwire[Navy]};\node at (5,0) {\vwire[Navy]};\node at (6,0) {\hwire[Navy]};\node at (6,0) {\vwire[Navy]};\node at (7,0) {\hwire[Navy]};\node at (7,0) {\vwire[Navy]};\draw[thick] (-0.5,-0.5) -- (-0.5,7.5) -- (7.5,7.5) -- (7.5,-0.5) -- (-0.5,-0.5);\node at (0,-1) {1};\node at (8,7) {2};\node at (1,-1) {2};\node at (8,6) {1};\node at (2,-1) {3};\node at (8,5) {5};\node at (3,-1) {4};\node at (8,4) {3};\node at (4,-1) {5};\node at (8,3) {8};\node at (5,-1) {6};\node at (8,2) {7};\node at (6,-1) {7};\node at (8,1) {6};\node at (7,-1) {8};\node at (8,0) {4};\end{tikzpicture}
        \begin{tikzpicture}[scale =.4]\node at (1,5) {\shade};\node at (2,5) {\shade};\node at (2,4) {\shade};\node at (1,4) {\shade};\grid{7}{7}\node at (0,7) {\nowire};\node at (1,7) {\nowire};\node at (2,7) {\nowire};\node at (3,7) {\are[Navy]};\node at (4,7) {\hwire[Navy]};\node at (5,7) {\hwire[Navy]};\node at (6,7) {\hwire[Navy]};\node at (7,7) {\hwire[Navy]};\node at (0,6) {\nowire};\node at (1,6) {\nowire};\node at (2,6) {\are[Navy]};\node at (3,6) {\hwire[Navy]};\node at (3,6) {\vwire[Navy]};\node at (4,6) {\hwire[Navy]};\node at (5,6) {\hwire[Navy]};\node at (6,6) {\hwire[Navy]};\node at (7,6) {\hwire[Navy]};\node at (0,5) {\are[Navy]};\node at (1,5) {\hwire[Navy]};\node at (2,5) {\jay[Navy]};\node at (3,5) {\vwire[Navy]};\node at (4,5) {\nowire};\node at (5,5) {\are[Orange]};\node at (6,5) {\hwire[Orange]};\node at (7,5) {\hwire[Orange]};\node at (0,4) {\vwire[Navy]};\node at (1,4) {\are[Navy]};\node at (2,4) {\hwire[Navy]};\node at (3,4) {\jay[Navy]};\node at (4,4) {\are[Orange]};\node at (5,4) {\hwire[Orange]};\node at (5,4) {\vwire[Orange]};\node at (6,4) {\hwire[Orange]};\node at (7,4) {\hwire[Orange]};\node at (0,3) {\vwire[Navy]};\node at (1,3) {\vwire[Navy]};\node at (2,3) {\are[Orange]};\node at (3,3) {\hwire[Orange]};\node at (4,3) {\jay[Orange]};\node at (5,3) {\vwire[Orange]};\node at (6,3) {\nowire};\node at (7,3) {\are[Navy]};\node at (0,2) {\vwire[Navy]};\node at (1,2) {\vwire[Navy]};\node at (2,2) {\vwire[Orange]};\node at (3,2) {\nowire};\node at (4,2) {\are[Orange]};\node at (5,2) {\jay[Orange]};\node at (6,2) {\are[Navy]};\node at (7,2) {\hwire[Navy]};\node at (7,2) {\vwire[Navy]};\node at (0,1) {\vwire[Navy]};\node at (1,1) {\vwire[Navy]};\node at (2,1) {\vwire[Orange]};\node at (3,1) {\nowire};\node at (4,1) {\vwire[Orange]};\node at (5,1) {\are[Navy]};\node at (6,1) {\hwire[Navy]};\node at (6,1) {\vwire[Navy]};\node at (7,1) {\hwire[Navy]};\node at (7,1) {\vwire[Navy]};\node at (0,0) {\vwire[Navy]};\node at (1,0) {\vwire[Navy]};\node at (2,0) {\vwire[Orange]};\node at (3,0) {\are[Navy]};\node at (4,0) {\hwire[Navy]};\node at (4,0) {\vwire[Orange]};\node at (5,0) {\hwire[Navy]};\node at (5,0) {\vwire[Navy]};\node at (6,0) {\hwire[Navy]};\node at (6,0) {\vwire[Navy]};\node at (7,0) {\hwire[Navy]};\node at (7,0) {\vwire[Navy]};\draw[thick] (-0.5,-0.5) -- (-0.5,7.5) -- (7.5,7.5) -- (7.5,-0.5) -- (-0.5,-0.5);\node at (0,-1) {1};\node at (8,7) {2};\node at (1,-1) {2};\node at (8,6) {1};\node at (2,-1) {3};\node at (8,5) {5};\node at (3,-1) {4};\node at (8,4) {3};\node at (4,-1) {5};\node at (8,3) {8};\node at (5,-1) {6};\node at (8,2) {7};\node at (6,-1) {7};\node at (8,1) {6};\node at (7,-1) {8};\node at (8,0) {4};\end{tikzpicture}
        \begin{tikzpicture}[scale =.4]\node at (1,5) {\shade};\node at (2,4) {\shade};\grid{7}{7}\node at (0,7) {\vwire[Orange]};\node at (1,7) {\vwire[Orange]};\node at (2,7) {\vwire[Navy]};\node at (3,7) {\el[Navy]};\node at (4,7) {\hwire[Navy]};\node at (4,7) {\vwire[Navy]};\node at (5,7) {\hwire[Navy]};\node at (5,7) {\vwire[Navy]};\node at (6,7) {\hwire[Navy]};\node at (6,7) {\vwire[Navy]};\node at (7,7) {\hwire[Navy]};\node at (7,7) {\vwire[Navy]};\node at (0,6) {\vwire[Orange]};\node at (1,6) {\vwire[Orange]};\node at (2,6) {\el[Navy]};\node at (3,6) {\hwire[Navy]};\node at (4,6) {\hwire[Navy]};\node at (4,6) {\vwire[Navy]};\node at (5,6) {\hwire[Navy]};\node at (5,6) {\vwire[Navy]};\node at (6,6) {\hwire[Navy]};\node at (6,6) {\vwire[Navy]};\node at (7,6) {\hwire[Navy]};\node at (7,6) {\vwire[Navy]};\node at (0,5) {\el[Orange]};\node at (1,5) {\hwire[Orange]};\node at (1,5) {\vwire[Orange]};\node at (2,5) {\en[Orange]};\node at (3,5) {\nowire};\node at (4,5) {\vwire[Navy]};\node at (5,5) {\el[Navy]};\node at (6,5) {\hwire[Navy]};\node at (6,5) {\vwire[Navy]};\node at (7,5) {\hwire[Navy]};\node at (7,5) {\vwire[Navy]};\node at (0,4) {\nowire};\node at (1,4) {\el[Orange]};\node at (2,4) {\newire[Orange]};\node at (2,4) {\swwire[Orange]};\node at (3,4) {\en[Orange]};\node at (4,4) {\el[Navy]};\node at (5,4) {\hwire[Navy]};\node at (6,4) {\hwire[Navy]};\node at (6,4) {\vwire[Navy]};\node at (7,4) {\hwire[Navy]};\node at (7,4) {\vwire[Navy]};\node at (0,3) {\nowire};\node at (1,3) {\nowire};\node at (2,3) {\el[Orange]};\node at (3,3) {\newire[Orange]};\node at (3,3) {\swwire[Orange]};\node at (4,3) {\en[Orange]};\node at (5,3) {\nowire};\node at (6,3) {\vwire[Navy]};\node at (7,3) {\el[Navy]};\node at (0,2) {\nowire};\node at (1,2) {\nowire};\node at (2,2) {\nowire};\node at (3,2) {\vwire[Orange]};\node at (4,2) {\el[Orange]};\node at (5,2) {\en[Orange]};\node at (6,2) {\el[Navy]};\node at (7,2) {\hwire[Navy]};\node at (0,1) {\nowire};\node at (1,1) {\nowire};\node at (2,1) {\nowire};\node at (3,1) {\vwire[Orange]};\node at (4,1) {\nowire};\node at (5,1) {\el[Orange]};\node at (6,1) {\hwire[Orange]};\node at (7,1) {\hwire[Orange]};\node at (0,0) {\nowire};\node at (1,0) {\nowire};\node at (2,0) {\nowire};\node at (3,0) {\el[Orange]};\node at (4,0) {\hwire[Orange]};\node at (5,0) {\hwire[Orange]};\node at (6,0) {\hwire[Orange]};\node at (7,0) {\hwire[Orange]};\draw[thick] (-0.5,-0.5) -- (-0.5,7.5) -- (7.5,7.5) -- (7.5,-0.5) -- (-0.5,-0.5);\node at (0,-1) {1};\node at (8,7) {4};\node at (1,-1) {2};\node at (8,6) {3};\node at (2,-1) {3};\node at (8,5) {6};\node at (3,-1) {4};\node at (8,4) {5};\node at (4,-1) {5};\node at (8,3) {8};\node at (5,-1) {6};\node at (8,2) {7};\node at (6,-1) {7};\node at (8,1) {1};\node at (7,-1) {8};\node at (8,0) {2};\end{tikzpicture}
        
        \caption{A $216543$ pattern with $(2,2)$ and $(2,3)$  blocking pipes.}
        \label{fig:12543Blocks}
\end{figure}
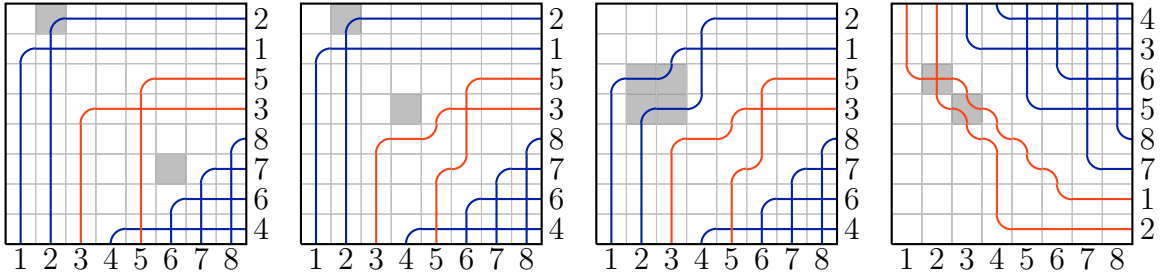

\begin{proposition}\label{p:215643}
    If $w$ contains a pattern $\pi=215643$, then there exists $B\in \BPD{w}$ such that co$(B)$ is non-reduced.
\end{proposition}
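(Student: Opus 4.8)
The plan is to follow the template established in Propositions~\ref{p:1423}--\ref{p:216543}. By Proposition~\ref{prop:patt} (see Appendix~\ref{a:droops}), applying the droops $(1,2)\ssearrow(4,4)$ and $(2,1)\ssearrow(3,3)$ to the Rothe BPD of $215643$ produces a BPD whose co-BPD is non-reduced, with the $1$ and $2$ pipes of $\pi$ as witness pipes. Given an arbitrary $w$ containing $215643$, if there are no blocking pipes for these droops then Lemma~\ref{lem:blocking} finishes immediately, so I would assume blocking pipes are present; by the range of admissible types for the two droops, each blocking pipe has one of the types $(1,2),(1,3),(2,1),(2,2),(2,3),(3,2),(3,3)$.

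First I would eliminate every type but $(2,3)$. A $(1,3)$ pipe completes a $241653$ pattern with the $2,1,5,4,3$ pipes, so Proposition~\ref{p:241653} applies; a $(3,2)$ or $(3,3)$ pipe completes a $1423$ pattern with the $1,5,4$ pipes, so Proposition~\ref{p:1423} applies; and a $(2,2)$ pipe inserts to $2136754$, which contains $12543$, so Proposition~\ref{p:12543} applies. The boundary types are re-absorbed: a $(1,2)$ pipe can be taken as the ``$2$'' of the pattern and a $(2,1)$ pipe as the ``$1$'', each re-choice strictly lowering the number of blocking pipes. A brief interaction check then shows that two $(2,3)$ pipes occurring in decreasing order complete a $13254$ pattern (reducing to Proposition~\ref{p:13254}), so I may finally assume every blocking pipe has type $(2,3)$ and that these occur in increasing order.

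The heart of the proof is the explicit construction in this remaining case, which genuinely cannot be reduced: a single $(2,3)$ pipe inserts to $2146753$, and one checks that this avoids all six earlier patterns. Here I would droop the $(2,3)$ pipes from right to left, sending each into the lowest free box between it and the previously drooped pipe. Because the pipes are in increasing order every such droop is active, and each advances a vertical pair of empty boxes toward the column above the $3$ pipe; once all of them are drooped, a $2\times 2$ block of empty boxes (possibly broken up by non-blocking pipes) is exposed, into whose bottom-right and top-left corners the $2$ pipe and then the $1$ pipe may droop. Since each droop deposits an elbow where the previous elbow sat, the drooped pipes form the alternating chain of $\inlineare$- and $\inlinejay$-elbows required by Lemma~\ref{l:config}, and the two flats created by the final droops land directly above the bottom $\inlineare$ and directly left of the new $\inlinejay$; hence $B$ contains the configuration and $\co{B}$ is non-reduced.

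The hard part, exactly as in Proposition~\ref{p:216543}, will be this final bookkeeping: verifying that after the shepherding droops the $2$- and $1$-pipe droops really do close the top flat, its $\inlinejay$, the bottom $\inlineare$, and the bottom flat into the configuration's rectangle, and that no interleaved non-blocking pipe breaks the elbow chain. The crucial point to nail down is that the increasing order of the $(2,3)$ pipes keeps every intermediate droop unobstructed, so that the block of empty boxes can always be advanced into position.
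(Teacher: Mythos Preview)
Your proposal is correct and follows the paper's approach, which simply defers to the proof of Proposition~\ref{p:216543} verbatim, noting only that ``different pipes'' are used to form the $241653$ and $1423$ patterns in the $(1,3)$ and $(3,2)/(3,3)$ cases --- exactly the substitutions you identify.

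The one genuine difference is your treatment of $(2,2)$ blocking pipes. The paper (via the $216543$ argument) keeps $(2,2)$ in play through a four-way case split --- two $(2,2)$'s increasing, two decreasing, a $(2,3)$ preceding a $(2,2)$ --- eventually reducing to at most one $(2,2)$ pipe, which is then absorbed into the explicit construction. You instead observe that a single $(2,2)$ pipe for $215643$ already forces a $12543$ pattern (the blocking pipe together with the $1$, $6$, $4$, $3$ pipes of $\pi$), discharging the case immediately via Proposition~\ref{p:12543}. This is a legitimate simplification: it shortens the case analysis and lets the final explicit construction deal only with $(2,3)$ pipes in increasing order, which is precisely the paper's construction with the optional $(2,2)$ step omitted. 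Both routes are valid; yours is cleaner here, while the paper's has the virtue of being literally identical to the $216543$ argument.
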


\begin{proof}
    The proof is omitted as it is exactly same as the proof for Proposition~\ref{p:216543} with the exception of different pipes taken to form the $241653$ or $1423$ patterns for those respective cases.
\end{proof}

This leads us to the final result of this section.

\newpage

\begin{theorem}\label{thm:reducedImpliesAvoid}
    If $\text{co-BPD}(w)$ has only reduced co-BPDs, then $w$ avoids all seven patterns from $\Pi$.
\end{theorem}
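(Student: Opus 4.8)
The plan is to prove the contrapositive: if $w$ contains at least one of the seven patterns in $\Pi$, then there exists some $B \in \BPD{w}$ for which $\co{B}$ is non-reduced, and hence $\text{co-BPD}(w)$ does not consist only of reduced co-BPDs. This is exactly the negation of the hypothesis and conclusion of the theorem, so establishing it suffices.

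The key observation is that Section~\ref{s:ness} has already been organized so that each element of $\Pi$ is handled by a dedicated proposition. Concretely, Propositions~\ref{p:1423}, \ref{p:13254}, \ref{p:12543}, \ref{p:25143}, \ref{p:241653}, \ref{p:216543}, and~\ref{p:215643} each prove a statement of the form ``if $w$ contains $\pi$, then some $B \in \BPD{w}$ has $\co{B}$ non-reduced'' for the seven permutations $\pi \in \{1423,\ 13254,\ 12543,\ 25143,\ 241653,\ 216543,\ 215643\}$ respectively. Since this list of seven permutations is precisely the set $\Pi$, the disjunction of these propositions covers every possible way in which $w$ could contain a pattern from $\Pi$.

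Given this, the proof itself is a short case analysis. Assuming $w$ contains some $\pi \in \Pi$, I would invoke the proposition matching $\pi$, extract the witnessing bumpless pipe dream $B \in \BPD{w}$ whose co-BPD $\co{B}$ is non-reduced, and conclude directly that $\text{co-BPD}(w)$ contains a non-reduced co-BPD. Negating both sides then gives precisely the theorem statement: all co-BPDs of $w$ being reduced forces $w$ to avoid every pattern in $\Pi$.

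I do not expect any genuine obstacle at this stage, since all of the combinatorial work—constructing, for each pattern, an explicit sequence of droop moves (adjusted for blocking pipes via Lemma~\ref{lem:blocking} and the reductions built into each proposition) that realizes the configuration of Lemma~\ref{l:config}—has already been discharged inside the seven propositions. The only point requiring verification is the bookkeeping claim that these propositions jointly exhaust $\Pi$, which is immediate by comparing the two lists. Thus the theorem follows by simply combining Propositions~\ref{p:1423} through~\ref{p:215643}.
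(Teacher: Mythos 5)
Your proposal is correct and matches the paper's proof exactly: the paper also proves the theorem by taking the contrapositive and combining Propositions~\ref{p:1423} through~\ref{p:215643}, which together cover all seven patterns in $\Pi$. No further comment is needed.
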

\begin{proof}
The result follows by contrapositive of Propositions~\ref{p:1423} through~\ref{p:215643}.
\end{proof}

\section{Concluding Remarks}\label{s:app}

As an application of Theorem~\ref{t:pattern-char} we have that if $w$ avoids all the patterns in $\Pi$, then every single co-BPD of $w$ contributes to the Schubert expansion of $\mathfrak{G}_w$.
This means that if we can state anything about the permutations of the co-BPDs of such a $w$, then we know those permutations appear in the Schubert expansion.
The following two corollaries give an example of such a result.

\begin{corollary}\label{c:nonreducedBPDcopatterns}
    If $B$ is a non-reduced BPD, then co($B$) traces out a permutation that contains at least one of pattern in $\Pi^r$, which is the reverse of each permutation in $\Pi$.
\end{corollary}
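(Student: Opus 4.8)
The plan is to deduce this from the main characterization (Theorem~\ref{t:pattern-char}) by exploiting a reflection symmetry that intertwines $\co{\cdot}$ with a geometric flip of the grid. Let $\sigma$ denote the reflection of an $n\times n$ diagram across a horizontal axis (the ``up--down flip''): it sends a tile in row $r$ to row $n+1-r$, interchanges are- and el-elbows, interchanges jay- and en-elbows, and fixes the vertical, horizontal, and crossing tiles. The first facts I would record are that $\sigma$ carries a bumpless pipe dream (pipes entering from the bottom) to a co-bumpless pipe dream (pipes entering from the top), and conversely; and that, being a rigid motion of the plane, $\sigma$ preserves the pairwise crossing structure of the pipes, so $\sigma(D)$ is reduced if and only if $D$ is.

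Next I would track permutations through $\sigma$. Reading entry labels left-to-right along the flipped edge and exit labels top-to-bottom, an elementary bookkeeping shows that if $B\in\BPD{w}$ then $\sigma(B)$ is a co-BPD tracing the reverse $w^{r}=w_{n}\cdots w_{1}$, and dually that a co-BPD tracing $v$ flips to an element of $\BPD{v^{r}}$. The second key step is the commutation identity $\co{\sigma(D)}=\sigma(\co{D})$ for every diagram $D$: the tile substitution of $\co{\cdot}$ agrees with that of $\sigma$ on elbows and acts oppositely on the straight and crossing tiles, and checking cell-by-cell shows the two tile maps commute while each flips the row index exactly once, so the two operations commute. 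Together with the involutivity $\co{\co{D}}=D$ this is the engine of the proof.

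With these in hand the corollary is short. Suppose $B$ is non-reduced and let $v$ be the permutation traced by $\co{B}$. Put $B'=\sigma(\co{B})$; by the first two steps $B'$ is a genuine bottom-entering bumpless pipe dream with associated permutation $v^{r}$. Its co-BPD is $\co{B'}=\co{\sigma(\co{B})}=\sigma(\co{\co{B}})=\sigma(B)$, which is non-reduced because $B$ is and $\sigma$ preserves reducedness. Applying the contrapositive of Theorem~\ref{thm:avoidImpliesReduced} (equivalently Theorem~\ref{t:pattern-char}) to $B'$ shows that $v^{r}$ \emph{contains} some pattern $\pi\in\Pi$. Finally, reversal of a permutation reverses each of its subsequences, so $v^{r}$ contains $\pi$ precisely when $v$ contains $\pi^{r}\in\Pi^{r}$; hence $v$ contains a pattern in $\Pi^{r}$, as claimed.

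The step I expect to require the most care is the commutation identity $\co{\sigma(\cdot)}=\sigma(\co{\cdot})$ together with the two reversal computations. Although each is elementary, they hinge on keeping the differing reading conventions straight—entry from the bottom for BPDs versus entry from the top for co-BPDs—and on verifying that $\sigma$ and $\co{\cdot}$ agree on elbows but act oppositely on straight and crossing tiles in exactly the way that forces them to commute. Once these conventions are pinned down, the corollary is a purely formal consequence of Theorem~\ref{t:pattern-char}.
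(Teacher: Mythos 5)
Your proof is correct and takes essentially the same approach as the paper: the paper's own argument likewise flips $\co{B}$ vertically to obtain a BPD for the reversed permutation and then applies the non-reduced-implies-pattern-containment direction of Theorem~\ref{t:pattern-char}. Your explicit commutation identity $\co{\sigma(D)}=\sigma(\co{D})$ and the involutivity of $\co{\cdot}$ simply make rigorous the step the paper states tersely (that the flip of $\co{B}$ ``must have the configuration'' when $B$ is non-reduced).
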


\begin{proof}
    Note that for any co-BPD, co($B$), flipping co($B$) vertically gives a BPD, f(co($B$)), whose permutation is the reverse of co($B$)'s permutation.
    If $B$ is non-reduced, then f(co($B$)) must have the configuration meaning f(co($B$)) contains a pattern in $\Pi$, thus co($B$) contains a pattern in $\Pi^r$.
\end{proof}

We can say a bit more in the case when, in addition to avoiding the patterns in $\Pi$, $w$ also {\em contains} a 2143 pattern. Note that Lemma 7.2 in \cite{weigandt2021bumpless} gives that $\text{bpd}(w) = \text{BPD}(w)$ whenever $w$ avoids the 2143 pattern, i.e., when $w$ is a {\em vexillary} permutation. The converse of this statement is true as well, which we now demonstrate via contrapositive. 

\begin{lemma}
    \label{l:all-red-implies-vex}
    If $w$ contains a 2143 pattern then $\text{bpd}(w) \subsetneq \text{BPD}(w)$, that is, there exist non-reduced bumpless pipe dreams for $w$.
\end{lemma}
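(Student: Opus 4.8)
The plan is to prove the statement by directly constructing a non-reduced element of $\BPD{w}$, imitating the technique of Lemma~\ref{lem:blocking} and Propositions~\ref{p:1423}--\ref{p:215643}: start from the Rothe BPD of $w$, perform a controlled sequence of droops to prepare a local configuration, and then apply a single $K$-theoretic droop to force two pipes to cross twice. Since $\bpd{w} \subseteq \BPD{w}$ always holds, and since droops and $K$-theoretic droops preserve the associated permutation (as recorded after the definition of the $K$-theoretic droop), it suffices to exhibit one such non-reduced BPD.

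First I would isolate the pattern. Fix rows $r_1 < r_2 < r_3 < r_4$ whose exit values form a $2143$ pattern, and let $p$ be the pipe exiting in row $r_1$ (the ``$2$'') and $q$ the pipe exiting in row $r_2$ (the ``$1$''). Because $q$ enters to the left of $p$ while $p$ exits above $q$, the pipes $p$ and $q$ necessarily cross in every BPD of $w$; this is the ``$21$'' descent. The ``$43$'' descent in rows $r_3, r_4$ plays the complementary role of supplying a region of blank cells below and to the right of $p$ and $q$ into which $q$ can be drooped. Concretely, the minimal case $w = 2143$ already exhibits the target: the second displayed bumpless pipe dream in Section~\ref{s:prelim} is a non-reduced BPD of $2143$ in which the two leftmost pipes cross twice (at the two circled tiles), and the construction for general $w$ is the ``padded'' version of the droop sequence producing that gadget.

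Next I would carry out the droops. Starting from the Rothe BPD of $w$, I would droop $q$ (and, if necessary, the pipes of the ``$43$'' descent) so as to create an active region whose top-left $\inlineare$-tile belongs to $q$ and whose interior, after these preparatory droops, matches the left-hand side of a $K$-theoretic droop; applying that $K$-theoretic droop then produces a second crossing of $p$ and $q$, yielding a non-reduced BPD of $w$. As in Lemma~\ref{lem:blocking}, the row and column indices of every move are shifted by the number of non-pattern pipes exiting above or entering to the left of the relevant pipe, so that the move data transfers verbatim whenever no pipe obstructs it.

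The main obstacle, exactly as in the earlier propositions, is the presence of \emph{blocking pipes}: pipes of $w$ outside the chosen pattern that occupy cells needed for a droop. The plan is to dispose of these by the same bookkeeping used there. A blocking pipe can always either be re-selected as one of the four pattern pipes (strictly decreasing the number of obstructions, so the process terminates) or else be drooped out of the way into the blank region supplied by the ``$43$'' descent before the decisive $K$-theoretic droop. Verifying that one of these options is always available, and that the double crossing of $p$ and $q$ genuinely survives rather than being resolved into turns by an intervening pipe, is the crux; it is settled by the finite case analysis on the relative positions of the blocking pipes, which terminates because each reduction lowers the number of obstructions.
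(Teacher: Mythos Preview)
Your strategy---droop from the Rothe BPD to prepare a local configuration, then apply a single $K$-theoretic droop to force a double crossing---is exactly the paper's approach. Two concrete problems remain, however.

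First, you have the source of the $K$-droop on the wrong pipe. In the Rothe BPD, $p$'s $\inlineare$-tile sits at $(r_1,p)$ and $q$'s at $(r_2,q)$ with $r_1<r_2$ and $q<p$, so $p$'s elbow lies strictly \emph{northeast} of $q$'s; neither is northwest of the other. After drooping $q$ into the blank supplied by the ``$43$'' inversion, $q$ acquires a new $\inlineare$ strictly \emph{southeast} of $p$'s original elbow. The only viable $K$-droop therefore has $p$'s $\inlineare$ at the top-left, landing on $q$'s new elbow---not the other way round. As you have written it, with $q$'s $\inlineare$ as the top-left, no $K$-droop can reach $p$.

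Second, and more substantively, the blocking-pipe analysis is where the argument actually lives, and ``re-select a pattern pipe or droop it out of the way, descending on the obstruction count'' is not yet a proof. The paper's key device is to choose the $2143$ occurrence \emph{extremally}: among all such patterns, take the one in which the column indices of the ``$1$'' and ``$2$'' pipes are as large as possible. Any elbow obstructing the $K$-droop of $p$ then belongs to a pipe which, together with $q$ and the ``$4$'', ``$3$'' pipes, forms a $2143$ occurrence with strictly larger ``$2$''-column, contradicting maximality; so the $K$-droop region is automatically clean. Maximality of the ``$1$'' likewise prunes some obstructions to the preparatory droop of $q$, and the remaining ones are cleared by drooping them, largest first, into the target blank before $q$ moves. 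Without this extremal choice you have no mechanism guaranteeing the $K$-droop region is free of elbows, and the ``finite case analysis'' you invoke is exactly the step that the maximality argument replaces.
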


\begin{proof}
    Suppose $w$ contains a 2143 pattern. There could be multiple instances of this pattern, but among all such occurrences, fix $i < j < k < \ell$ to be the one where $i$ and $j$ are as large as possible. Note that the pipes exit in the order $j \; i \; \ell \; k$. Consider the Rothe BPD $\widehat{B}$ for $w$. The inversion $(k, \ell)$ in $w$ means that on $\widehat{B}$ there will be a $\inlinenowire$ tile strictly southeast of the (unique) $\inlineare$ tiles of both $i$ and $j$ in $\widehat{B}$. We wish to droop $i$ into this $\inlinenowire$ tile, but there may be blocking pipes $j'$ with $j < j' < k$ that exit between $i$ and $\ell$, obstructing this droop. If there are such pipes, droop them into the $\inlinenowire$ tile in order from largest to smallest, moving the $\inlinenowire$ tile strictly northwest until we arrive at a BPD $B' \in \text{bpd}(w)$ that has a $\inlinenowire$ tile still strictly southeast of the $\inlineare$ tile in $i$ and $j$ and no longer has such blocking pipes. We may now droop $i$ into this $\inlinenowire$ tile to get a BPD $B'' \in \text{bpd}(w)$. The $\inlinenowire$ tile of $B'$ that $i$ drooped into is now a $\inlinejay$ tile in $B''$ that has a $\inlineare$ tile of $i$ above it in the same column. This $\inlineare$ tile of $i$ is strictly southeast of the $\inlineare$ tile of $j$. We wish to $K$-droop $j$ into this $\inlineare$ tile of $i$. In order to be a blocking pipe it would have to be some $j'$ with $j < j' < k$ that exits between $j$ and $i$, but then $j' \; i \; \ell \; k$ forms a $2143$ pattern, contradicting $j$ chosen to be maximal.
    So there are no obstructions blocking $j$ from making a $K$-droop move into this tile, resulting in a (non-reduced) $B \in \text{BPD}(w) \setminus \text{bpd}(w)$ as desired.
\end{proof}

We can use the previous two results to achieve the following corollary.

%\newpage

\begin{corollary}
    If $w$ contains a 2143 pattern and avoids each pattern in $\Pi$, then there exists $v$ containing a pattern in $\Pi^r$ such that $a_{w, v} > 0$ where
    \begin{equation*}
        \mathfrak{G}_w = \sum_v (-1)^{\ell(v)-\ell(w)}a_{w,v} \mathfrak{S}_v.
    \end{equation*}
\end{corollary}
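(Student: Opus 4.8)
The plan is to chain together the three preceding results, using the single non-reduced bumpless pipe dream guaranteed by the $2143$ pattern as the witness. First, since $w$ contains a $2143$ pattern, Lemma~\ref{l:all-red-implies-vex} produces a non-reduced bumpless pipe dream $B \in \BPD{w} \setminus \bpd{w}$. This $B$ is the object that will certify a nonzero coefficient.

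Next I would invoke the pattern-avoidance hypothesis. Because $w$ avoids every pattern in $\Pi$, Theorem~\ref{t:pattern-char} guarantees that every element of $\text{co-BPD}(w)$ is reduced; in particular $\co{B}$ is reduced, even though $B$ itself is not. This is precisely the point where the failure of $\co{\cdot}$ to preserve reducedness is exploited: the $2143$ pattern forces $B$ to be non-reduced, while pattern avoidance forces $\co{B}$ to be reduced. Let $v$ be the permutation traced out by $\co{B}$.

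Now, since $B$ is non-reduced, Corollary~\ref{c:nonreducedBPDcopatterns} applies and tells us that $v$, the permutation traced out by $\co{B}$, contains at least one pattern in $\Pi^r$. Finally I would assemble the count: by definition $a_{w,v}$ counts those $B' \in \BPD{w}$ for which $\co{B'}$ is reduced and traces out $v$. Our chosen $B$ lies in $\BPD{w}$, has reduced co-BPD, and traces out $v$, so it is counted by $a_{w,v}$, giving $a_{w,v} \geq 1 > 0$ for a $v$ containing a pattern in $\Pi^r$, as required.

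Since the argument is a direct combination of three cited results, there is no genuine computational obstacle. The one subtlety worth stating carefully is that the same BPD $B$ must simultaneously play two roles: it must be non-reduced (so that Corollary~\ref{c:nonreducedBPDcopatterns} forces the $\Pi^r$ pattern in the traced-out permutation) and yet have a reduced co-BPD (so that it is actually counted by $a_{w,v}$). Both requirements hold under the stated hypotheses, and verifying that these two roles are compatible is essentially all there is to check.
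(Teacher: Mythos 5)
Your proof is correct and follows essentially the same route as the paper: both combine Lemma~\ref{l:all-red-implies-vex} (the $2143$ pattern yields a non-reduced $B$), Theorem~\ref{t:pattern-char} (avoidance of $\Pi$ forces $\co{B}$ to be reduced, so $B$ is counted by $a_{w,v}$ via Theorem~\ref{thm:g to s}), and Corollary~\ref{c:nonreducedBPDcopatterns} (non-reducedness of $B$ forces the traced-out $v$ to contain a pattern in $\Pi^r$). Your explicit remark that the single BPD $B$ must play both roles at once is exactly the point the paper's proof relies on, just stated more carefully.
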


\begin{proof}
    By Theorem~\ref{t:pattern-char} we have that for any $B\in\BPD{w}$, co$(B)$ is reduced. Therefore, by Theorem 1.1 in~\cite{weigandt2025changingbasespipedream} (cf. Theorem~\ref{thm:g to s} above), we have that if the permutation associated with co$(B)$ is $v$ then $a_{w,v}$ is nonzero.
    By Lemma~\ref{l:all-red-implies-vex}, we know $w$ has some non-reduced BPD, say $B$.
    By Corollary~\ref{c:nonreducedBPDcopatterns} we have that co$(B)$ whose associated permutation, $v$, contains a pattern in $\Pi^r$ meaning $a_{w,v}$ is nonzero. 
\end{proof}

\appendix

\section{Droop move sequences}
\label{a:droops}

Here we demonstrate the sequence of droop moves starting on the Rothe BPD of a given permutation in $\Pi$ that produces an instance of the configuration from Lemma~\ref{l:config}.
\begin{center}
        \begin{tabular}{c|c}
       Permutation  & Droop Moves\\
       \hline
        1423 & $(1,1) \ssearrow (2,3)$\\
        \hline
        12543 & $(2,2) \ssearrow (3,4)$\\
        & $(1,1) \ssearrow (2,3)$\\
        \hline
        13254 & $(2,3) \ssearrow (4,4)$\\
        & $(1,1) \ssearrow (2,3)$\\
        \hline
        25143 & $(1,2) \ssearrow (2,4)$\\
        & $(3,1) \ssearrow (4,3)$\\
        \hline
        215643 & $(1,2) \ssearrow (4,4)$\\
        & $(2,1) \ssearrow (3,3)$\\
        \hline
        216543 & $(1,2) \ssearrow (4,4)$\\
        & $(2,1) \ssearrow (3,3)$\\
        \hline
        241653 & $(2,4) \ssearrow (4,5)$\\
        & $(1,2) \ssearrow (2,4)$\\
        & $(3,1) \ssearrow (4,3)$
    \end{tabular}
    \end{center}

\[
\begin{tikzpicture}[scale=0.4]
\node at (0,0) {\oldvwire[UF2]};
\node at (1,0) {\oldvwire};
\node at (2,0) {\oldare};
\node at (3,0) {\cross};

\node at (0,1) {\oldvwire[UF2]};
\node at (1,1) {\oldare};
\node at (2,1) {\oldhwire};
\node at (3,1) {\cross};

\node at (0,2) {\oldvwire[UF2]};
\node at (1,2) {\nowire};
\node at (2,2) {\nowire};
\node at (3,2) {\oldare};

\node at (0,3) {\oldare[UF2]};
\node at (1,3) {\oldhwire[UF2]};
\node at (2,3) {\oldhwire[UF2]};
\node at (3,3) {\oldhwire[UF2]};

\draw[thick] (-0.5,-0.5) -- (-0.5,3.5) -- (3.5,3.5) -- (3.5,-0.5) -- (-0.5,-0.5);

\node at (0,-1) {1};
\node at (1,-1) {2};
\node at (2,-1) {3};
\node at (3,-1) {4};

\node at (4,3) {1};
\node at (4,2) {4};
\node at (4,1) {2};
\node at (4,0) {3};

\end{tikzpicture} \; \hspace{0.5in} \;
\begin{tikzpicture}[scale=0.4]
\node at (0,0) {\oldvwire[UF2]};
\node at (1,0) {\oldvwire};
\node at (2,0) {\oldare};
\node at (3,0) {\cross};

\node at (0,1) {\oldvwire[UF2]};
\node at (1,1) {\shade};
\node at (1,1) {\oldare};
\node at (2,1) {\shade};
\node at (2,1) {\oldhwire};
\node at (3,1) {\cross};

\node at (0,2) {\oldare[UF2]};
\node at (1,2) {\shade};
\node at (1,2) {\oldhwire[UF2]};
\node at (2,2) {\shade};
\node at (2,2) {\oldjay[UF2]};
\node at (3,2) {\oldare};

\node at (0,3) {\nowire};
\node at (1,3) {\nowire};
\node at (2,3) {\oldare[UF2]};
\node at (3,3) {\oldhwire[UF2]};

\draw[thick] (-0.5,-0.5) -- (-0.5,3.5) -- (3.5,3.5) -- (3.5,-0.5) -- (-0.5,-0.5);

\node at (0,-1) {1};
\node at (1,-1) {2};
\node at (2,-1) {3};
\node at (3,-1) {4};

\node at (4,3) {1};
\node at (4,2) {4};
\node at (4,1) {2};
\node at (4,0) {3};
\end{tikzpicture} \; \hspace{0.5in} \;
\begin{tikzpicture}[scale =.4]\node at (2,1) {\shade};\node at (1,2) {\shade};\node at (1,1) {\shade};\node at (2,2) {\shade};\grid{3}{3}\node at (0,3) {\vwire[Navy]};\node at (1,3) {\vwire[Navy]};\node at (2,3) {\el[Navy]};\node at (3,3) {\hwire[Navy]};\node at (3,3) {\vwire[Navy]};\node at (0,2) {\el[Navy]};\node at (1,2) {\hwire[Navy]};\node at (1,2) {\vwire[Navy]};\node at (2,2) {\en[Navy]};\node at (3,2) {\el[Navy]};\node at (0,1) {\nowire};\node at (1,1) {\el[Navy]};\node at (2,1) {\newire[Navy]};\node at (2,1) {\swwire[Navy]};\node at (3,1) {\hwire[Navy]};\node at (0,0) {\nowire};\node at (1,0) {\nowire};\node at (2,0) {\el[Navy]};\node at (3,0) {\hwire[Navy]};\draw[thick] (-0.5,-0.5) -- (-0.5,3.5) -- (3.5,3.5) -- (3.5,-0.5) -- (-0.5,-0.5);\node at (0,-1) {1};\node at (4,3) {3};\node at (1,-1) {2};\node at (4,2) {4};\node at (2,-1) {3};\node at (4,1) {1};\node at (3,-1) {4};\node at (4,0) {2};\end{tikzpicture} 
\]

\[
\begin{tikzpicture}[scale =.4]
\node at (0,4) {\oldare[UF2]};
\node at (1,4) {\oldhwire[UF2]};
\node at (2,4) {\oldhwire[UF2]};
\node at (3,4) {\oldhwire[UF2]};
\node at (4,4) {\oldhwire[UF2]};
\node at (0,3) {\oldvwire[UF2]};
\node at (1,3) {\oldare[UF2]};
\node at (2,3) {\oldhwire[UF2]};
\node at (3,3) {\oldhwire[UF2]};
\node at (4,3) {\oldhwire[UF2]};
\node at (0,2) {\oldvwire[UF2]};
\node at (1,2) {\oldvwire[UF2]};
\node at (2,2) {\nowire};
\node at (3,2) {\nowire};
\node at (4,2) {\oldare};
\node at (0,1) {\oldvwire[UF2]};
\node at (1,1) {\oldvwire[UF2]};
\node at (2,1) {\nowire};
\node at (3,1) {\oldare};
\node at (4,1) {\cross};
\node at (0,0) {\oldvwire[UF2]};
\node at (1,0) {\oldvwire[UF2]};
\node at (2,0) {\oldare};
\node at (3,0) {\cross};
\node at (4,0) {\cross};

\draw[thick] (-0.5,-0.5) -- (-0.5,4.5) -- (4.5,4.5) -- (4.5,-0.5) -- (-0.5,-0.5);

\node at (0,-1) {1};
\node at (1,-1) {2};
\node at (2,-1) {3};
\node at (3,-1) {4};
\node at (4,-1) {5};

\node at (5,4) {1};
\node at (5,3) {2};
\node at (5,2) {5};
\node at (5,1) {4};
\node at (5,0) {3};

\end{tikzpicture}
 \; \hspace{0.5in} \;
\begin{tikzpicture}[scale =.4]
\node at (0,4) {\oldare[UF2]};
\node at (1,4) {\oldhwire[UF2]};
\node at (2,4) {\oldhwire[UF2]};
\node at (3,4) {\oldhwire[UF2]};
\node at (4,4) {\oldhwire[UF2]};
\node at (0,3) {\oldvwire[UF2]};
\node at (1,3) {\nowire};
\node at (2,3) {\nowire};
\node at (3,3) {\oldare[UF2]};
\node at (4,3) {\oldhwire[UF2]};
\node at (0,2) {\oldvwire[UF2]};
\node at (1,2) {\oldare[UF2]};
\node at (2,2) {\oldhwire[UF2]};
\node at (3,2) {\oldjay[UF2]};
\node at (4,2) {\oldare};
\node at (0,1) {\oldvwire[UF2]};
\node at (1,1) {\oldvwire[UF2]};
\node at (2,1) {\nowire};
\node at (3,1) {\oldare};
\node at (4,1) {\cross};
\node at (0,0) {\oldvwire[UF2]};
\node at (1,0) {\oldvwire[UF2]};
\node at (2,0) {\oldare};
\node at (3,0) {\cross};
\node at (4,0) {\cross};

\draw[thick] (-0.5,-0.5) -- (-0.5,4.5) -- (4.5,4.5) -- (4.5,-0.5) -- (-0.5,-0.5);

\node at (0,-1) {1};
\node at (1,-1) {2};
\node at (2,-1) {3};
\node at (3,-1) {4};
\node at (4,-1) {5};

\node at (5,4) {1};
\node at (5,3) {2};
\node at (5,2) {5};
\node at (5,1) {4};
\node at (5,0) {3};

\end{tikzpicture} \; \hspace{0.5in} \;
\begin{tikzpicture}[scale =.4]
\node at (1,3) {\shade};
\node at (2,3) {\shade};
\node at (1,2) {\shade};
\node at (2,2) {\shade};

\node at (0,4) {\nowire};
\node at (1,4) {\nowire};
\node at (2,4) {\oldare[UF2]};
\node at (3,4) {\oldhwire[UF2]};
\node at (4,4) {\oldhwire[UF2]};
\node at (0,3) {\oldare[UF2]};
\node at (1,3) {\oldhwire[UF2]};
\node at (2,3) {\oldjay[UF2]};
\node at (3,3) {\oldare[UF2]};
\node at (4,3) {\oldhwire[UF2]};
\node at (0,2) {\oldvwire[UF2]};
\node at (1,2) {\oldare[UF2]};
\node at (2,2) {\oldhwire[UF2]};
\node at (3,2) {\oldjay[UF2]};
\node at (4,2) {\oldare};
\node at (0,1) {\oldvwire[UF2]};
\node at (1,1) {\oldvwire[UF2]};
\node at (2,1) {\nowire};
\node at (3,1) {\oldare};
\node at (4,1) {\cross};
\node at (0,0) {\oldvwire[UF2]};
\node at (1,0) {\oldvwire[UF2]};
\node at (2,0) {\oldare};
\node at (3,0) {\cross};
\node at (4,0) {\cross};

\draw[thick] (-0.5,-0.5) -- (-0.5,4.5) -- (4.5,4.5) -- (4.5,-0.5) -- (-0.5,-0.5);

\node at (0,-1) {1};
\node at (1,-1) {2};
\node at (2,-1) {3};
\node at (3,-1) {4};
\node at (4,-1) {5};

\node at (5,4) {1};
\node at (5,3) {2};
\node at (5,2) {5};
\node at (5,1) {4};
\node at (5,0) {3};

\end{tikzpicture} \; \hspace{0.5in} \;
\begin{tikzpicture}[scale =.4]\node at (1,3) {\shade};\node at (2,3) {\shade};\node at (2,2) {\shade};\node at (1,2) {\shade};\grid{4}{4}\node at (0,4) {\vwire[Navy]};\node at (1,4) {\vwire[Navy]};\node at (2,4) {\el[Navy]};\node at (3,4) {\hwire[Navy]};\node at (3,4) {\vwire[Navy]};\node at (4,4) {\hwire[Navy]};\node at (4,4) {\vwire[Navy]};\node at (0,3) {\el[Navy]};\node at (1,3) {\hwire[Navy]};\node at (1,3) {\vwire[Navy]};\node at (2,3) {\en[Navy]};\node at (3,3) {\el[Navy]};\node at (4,3) {\hwire[Navy]};\node at (4,3) {\vwire[Navy]};\node at (0,2) {\nowire};\node at (1,2) {\el[Navy]};\node at (2,2) {\newire[Navy]};\node at (2,2) {\swwire[Navy]};\node at (3,2) {\en[Navy]};\node at (4,2) {\el[Navy]};\node at (0,1) {\nowire};\node at (1,1) {\nowire};\node at (2,1) {\vwire[Navy]};\node at (3,1) {\el[Navy]};\node at (4,1) {\hwire[Navy]};\node at (0,0) {\nowire};\node at (1,0) {\nowire};\node at (2,0) {\el[Navy]};\node at (3,0) {\hwire[Navy]};\node at (4,0) {\hwire[Navy]};\draw[thick] (-0.5,-0.5) -- (-0.5,4.5) -- (4.5,4.5) -- (4.5,-0.5) -- (-0.5,-0.5);\node at (0,-1) {1};\node at (5,4) {3};\node at (1,-1) {2};\node at (5,3) {4};\node at (2,-1) {3};\node at (5,2) {5};\node at (3,-1) {4};\node at (5,1) {1};\node at (4,-1) {5};\node at (5,0) {2};\end{tikzpicture}
\]

\[
\begin{tikzpicture}[scale =.4]
\node at (0,4) {\oldare[UF2]};
\node at (1,4) {\oldhwire[UF2]};
\node at (2,4) {\oldhwire[UF2]};
\node at (3,4) {\oldhwire[UF2]};
\node at (4,4) {\oldhwire[UF2]};
\node at (0,3) {\oldvwire[UF2]};
\node at (1,3) {\nowire};
\node at (2,3) {\oldare[UF2]};
\node at (3,3) {\oldhwire[UF2]};
\node at (4,3) {\oldhwire[UF2]};
\node at (0,2) {\oldvwire[UF2]};
\node at (1,2) {\oldare};
\node at (2,2) {\oldhwire};
\node at (2,2) {\oldvwire[UF2]};
\node at (3,2) {\oldhwire};
\node at (4,2) {\oldhwire};
\node at (0,1) {\oldvwire[UF2]};
\node at (1,1) {\oldvwire};
\node at (2,1) {\oldvwire[UF2]};
\node at (3,1) {\nowire};
\node at (4,1) {\oldare};
\node at (0,0) {\oldvwire[UF2]};
\node at (1,0) {\oldvwire};
\node at (2,0) {\oldvwire[UF2]};
\node at (3,0) {\oldare};
\node at (4,0) {\cross};

\draw[thick] (-0.5,-0.5) -- (-0.5,4.5) -- (4.5,4.5) -- (4.5,-0.5) -- (-0.5,-0.5);

\node at (0,-1) {1};
\node at (1,-1) {2};
\node at (2,-1) {3};
\node at (3,-1) {4};
\node at (4,-1) {5};

\node at (5,4) {1};
\node at (5,3) {3};
\node at (5,2) {2};
\node at (5,1) {5};
\node at (5,0) {4};

\end{tikzpicture}
 \; \hspace{0.5in} \;
\begin{tikzpicture}[scale =.4]
\node at (0,4) {\oldare[UF2]};
\node at (1,4) {\oldhwire[UF2]};
\node at (2,4) {\oldhwire[UF2]};
\node at (3,4) {\oldhwire[UF2]};
\node at (4,4) {\oldhwire[UF2]};
\node at (0,3) {\oldvwire[UF2]};
\node at (1,3) {\nowire};
\node at (2,3) {\nowire};
\node at (3,3) {\oldare[UF2]};
\node at (4,3) {\oldhwire[UF2]};
\node at (0,2) {\oldvwire[UF2]};
\node at (1,2) {\oldare};
\node at (2,2) {\oldhwire};
\node at (3,2) {\oldhwire};
\node at (3,2) {\oldvwire[UF2]};
\node at (4,2) {\oldhwire};
\node at (0,1) {\oldvwire[UF2]};
\node at (1,1) {\oldvwire};
\node at (2,1) {\oldare[UF2]};
\node at (3,1) {\oldjay[UF2]};
\node at (4,1) {\oldare};
\node at (0,0) {\oldvwire[UF2]};
\node at (1,0) {\oldvwire};
\node at (2,0) {\oldvwire[UF2]};
\node at (3,0) {\oldare};
\node at (4,0) {\cross};

\draw[thick] (-0.5,-0.5) -- (-0.5,4.5) -- (4.5,4.5) -- (4.5,-0.5) -- (-0.5,-0.5);

\node at (0,-1) {1};
\node at (1,-1) {2};
\node at (2,-1) {3};
\node at (3,-1) {4};
\node at (4,-1) {5};

\node at (5,4) {1};
\node at (5,3) {3};
\node at (5,2) {2};
\node at (5,1) {5};
\node at (5,0) {4};

\end{tikzpicture} \; \hspace{0.5in} \;
\begin{tikzpicture}[scale =.4]
\node at (1,3) {\shade};
\node at (2,3) {\shade};
\node at (1,2) {\shade};
\node at (2,2) {\shade};

\node at (0,4) {\nowire};
\node at (1,4) {\nowire};
\node at (2,4) {\oldare[UF2]};
\node at (3,4) {\oldhwire[UF2]};
\node at (4,4) {\oldhwire[UF2]};
\node at (0,3) {\oldare[UF2]};
\node at (1,3) {\oldhwire[UF2]};
\node at (2,3) {\oldjay[UF2]};
\node at (3,3) {\oldare[UF2]};
\node at (4,3) {\oldhwire[UF2]};
\node at (0,2) {\oldvwire[UF2]};
\node at (1,2) {\oldare};
\node at (2,2) {\oldhwire};
\node at (3,2) {\oldhwire};
\node at (3,2) {\oldvwire[UF2]};
\node at (4,2) {\oldhwire};
\node at (0,1) {\oldvwire[UF2]};
\node at (1,1) {\oldvwire};
\node at (2,1) {\oldare[UF2]};
\node at (3,1) {\oldjay[UF2]};
\node at (4,1) {\oldare};
\node at (0,0) {\oldvwire[UF2]};
\node at (1,0) {\oldvwire};
\node at (2,0) {\oldvwire[UF2]};
\node at (3,0) {\oldare};
\node at (4,0) {\cross};

\draw[thick] (-0.5,-0.5) -- (-0.5,4.5) -- (4.5,4.5) -- (4.5,-0.5) -- (-0.5,-0.5);

\node at (0,-1) {1};
\node at (1,-1) {2};
\node at (2,-1) {3};
\node at (3,-1) {4};
\node at (4,-1) {5};

\node at (5,4) {1};
\node at (5,3) {3};
\node at (5,2) {2};
\node at (5,1) {5};
\node at (5,0) {4};

\end{tikzpicture} \; \hspace{0.5in} \;
\begin{tikzpicture}[scale =.4]\node at (1,3) {\shade};\node at (1,2) {\shade};\node at (2,2) {\shade};\node at (2,3) {\shade};\grid{4}{4}\node at (0,4) {\vwire[Navy]};\node at (1,4) {\vwire[Navy]};\node at (2,4) {\el[Navy]};\node at (3,4) {\hwire[Navy]};\node at (3,4) {\vwire[Navy]};\node at (4,4) {\hwire[Navy]};\node at (4,4) {\vwire[Navy]};\node at (0,3) {\el[Navy]};\node at (1,3) {\hwire[Navy]};\node at (1,3) {\vwire[Navy]};\node at (2,3) {\en[Navy]};\node at (3,3) {\el[Navy]};\node at (4,3) {\hwire[Navy]};\node at (4,3) {\vwire[Navy]};\node at (0,2) {\nowire};\node at (1,2) {\el[Navy]};\node at (2,2) {\newire[Navy]};\node at (2,2) {\swwire[Navy]};\node at (3,2) {\hwire[Navy]};\node at (4,2) {\hwire[Navy]};\node at (4,2) {\vwire[Navy]};\node at (0,1) {\nowire};\node at (1,1) {\nowire};\node at (2,1) {\el[Navy]};\node at (3,1) {\en[Navy]};\node at (4,1) {\el[Navy]};\node at (0,0) {\nowire};\node at (1,0) {\nowire};\node at (2,0) {\nowire};\node at (3,0) {\el[Navy]};\node at (4,0) {\hwire[Navy]};\draw[thick] (-0.5,-0.5) -- (-0.5,4.5) -- (4.5,4.5) -- (4.5,-0.5) -- (-0.5,-0.5);\node at (0,-1) {1};\node at (5,4) {3};\node at (1,-1) {2};\node at (5,3) {4};\node at (2,-1) {3};\node at (5,2) {1};\node at (3,-1) {4};\node at (5,1) {5};\node at (4,-1) {5};\node at (5,0) {2};\end{tikzpicture}
\]

\[
\begin{tikzpicture}[scale =.4]\grid{4}{4}\node at (0,4) {\nowire};\node at (1,4) {\are[Orange]};\node at (2,4) {\hwire[Orange]};\node at (3,4) {\hwire[Orange]};\node at (4,4) {\hwire[Orange]};\node at (0,3) {\nowire};\node at (1,3) {\vwire[Orange]};\node at (2,3) {\nowire};\node at (3,3) {\nowire};\node at (4,3) {\are[Navy]};\node at (0,2) {\are[Orange]};\node at (1,2) {\hwire[Orange]};\node at (1,2) {\vwire[Orange]};\node at (2,2) {\hwire[Orange]};\node at (3,2) {\hwire[Orange]};\node at (4,2) {\hwire[Orange]};\node at (4,2) {\vwire[Navy]};\node at (0,1) {\vwire[Orange]};\node at (1,1) {\vwire[Orange]};\node at (2,1) {\nowire};\node at (3,1) {\are[Navy]};\node at (4,1) {\hwire[Navy]};\node at (4,1) {\vwire[Navy]};\node at (0,0) {\vwire[Orange]};\node at (1,0) {\vwire[Orange]};\node at (2,0) {\are[Navy]};\node at (3,0) {\hwire[Navy]};\node at (3,0) {\vwire[Navy]};\node at (4,0) {\hwire[Navy]};\node at (4,0) {\vwire[Navy]};\draw[thick] (-0.5,-0.5) -- (-0.5,4.5) -- (4.5,4.5) -- (4.5,-0.5) -- (-0.5,-0.5);\node at (0,-1) {1};\node at (5,4) {2};\node at (1,-1) {2};\node at (5,3) {5};\node at (2,-1) {3};\node at (5,2) {1};\node at (3,-1) {4};\node at (5,1) {4};\node at (4,-1) {5};\node at (5,0) {3};\end{tikzpicture}
 \; \hspace{0.5in} \;
\begin{tikzpicture}[scale =.4]\grid{4}{4}\node at (0,4) {\nowire};\node at (1,4) {\nowire};\node at (2,4) {\nowire};\node at (3,4) {\are[Orange]};\node at (4,4) {\hwire[Orange]};\node at (0,3) {\nowire};\node at (1,3) {\are[Orange]};\node at (2,3) {\hwire[Orange]};\node at (3,3) {\jay[Orange]};\node at (4,3) {\are[Navy]};\node at (0,2) {\are[Orange]};\node at (1,2) {\hwire[Orange]};\node at (1,2) {\vwire[Orange]};\node at (2,2) {\hwire[Orange]};\node at (3,2) {\hwire[Orange]};\node at (4,2) {\hwire[Orange]};\node at (4,2) {\vwire[Navy]};\node at (0,1) {\vwire[Orange]};\node at (1,1) {\vwire[Orange]};\node at (2,1) {\nowire};\node at (3,1) {\are[Navy]};\node at (4,1) {\hwire[Navy]};\node at (4,1) {\vwire[Navy]};\node at (0,0) {\vwire[Orange]};\node at (1,0) {\vwire[Orange]};\node at (2,0) {\are[Navy]};\node at (3,0) {\hwire[Navy]};\node at (3,0) {\vwire[Navy]};\node at (4,0) {\hwire[Navy]};\node at (4,0) {\vwire[Navy]};\draw[thick] (-0.5,-0.5) -- (-0.5,4.5) -- (4.5,4.5) -- (4.5,-0.5) -- (-0.5,-0.5);\node at (0,-1) {1};\node at (5,4) {2};\node at (1,-1) {2};\node at (5,3) {5};\node at (2,-1) {3};\node at (5,2) {1};\node at (3,-1) {4};\node at (5,1) {4};\node at (4,-1) {5};\node at (5,0) {3};\end{tikzpicture}
 \; \hspace{0.5in} \;
\begin{tikzpicture}[scale =.4]\node at (2,3) {\shade};\node at (2,2) {\shade};\node at (3,3) {\shade};\node at (3,2) {\shade};\grid{4}{4}\node at (0,4) {\nowire};\node at (1,4) {\nowire};\node at (2,4) {\nowire};\node at (3,4) {\are[Orange]};\node at (4,4) {\hwire[Orange]};\node at (0,3) {\nowire};\node at (1,3) {\are[Orange]};\node at (2,3) {\hwire[Orange]};\node at (3,3) {\jay[Orange]};\node at (4,3) {\are[Navy]};\node at (0,2) {\nowire};\node at (1,2) {\vwire[Orange]};\node at (2,2) {\are[Orange]};\node at (3,2) {\hwire[Orange]};\node at (4,2) {\hwire[Orange]};\node at (4,2) {\vwire[Navy]};\node at (0,1) {\are[Orange]};\node at (1,1) {\hwire[Orange]};\node at (1,1) {\vwire[Orange]};\node at (2,1) {\jay[Orange]};\node at (3,1) {\are[Navy]};\node at (4,1) {\hwire[Navy]};\node at (4,1) {\vwire[Navy]};\node at (0,0) {\vwire[Orange]};\node at (1,0) {\vwire[Orange]};\node at (2,0) {\are[Navy]};\node at (3,0) {\hwire[Navy]};\node at (3,0) {\vwire[Navy]};\node at (4,0) {\hwire[Navy]};\node at (4,0) {\vwire[Navy]};\draw[thick] (-0.5,-0.5) -- (-0.5,4.5) -- (4.5,4.5) -- (4.5,-0.5) -- (-0.5,-0.5);\node at (0,-1) {1};\node at (5,4) {2};\node at (1,-1) {2};\node at (5,3) {5};\node at (2,-1) {3};\node at (5,2) {1};\node at (3,-1) {4};\node at (5,1) {4};\node at (4,-1) {5};\node at (5,0) {3};\end{tikzpicture}
 \; \hspace{0.5in} \;
\begin{tikzpicture}[scale =.4]\node at (2,3) {\shade};\node at (2,2) {\shade};\node at (3,3) {\shade};\node at (3,2) {\shade};\grid{4}{4}\node at (0,4) {\vwire[Navy]};\node at (1,4) {\vwire[Navy]};\node at (2,4) {\vwire[Navy]};\node at (3,4) {\el[Navy]};\node at (4,4) {\hwire[Navy]};\node at (4,4) {\vwire[Navy]};\node at (0,3) {\vwire[Navy]};\node at (1,3) {\el[Navy]};\node at (2,3) {\hwire[Navy]};\node at (2,3) {\vwire[Navy]};\node at (3,3) {\en[Navy]};\node at (4,3) {\el[Navy]};\node at (0,2) {\vwire[Navy]};\node at (1,2) {\nowire};\node at (2,2) {\el[Navy]};\node at (3,2) {\newire[Navy]};\node at (3,2) {\swwire[Navy]};\node at (4,2) {\hwire[Navy]};\node at (0,1) {\el[Navy]};\node at (1,1) {\hwire[Navy]};\node at (2,1) {\en[Navy]};\node at (3,1) {\el[Navy]};\node at (4,1) {\hwire[Navy]};\node at (0,0) {\nowire};\node at (1,0) {\nowire};\node at (2,0) {\el[Navy]};\node at (3,0) {\hwire[Navy]};\node at (4,0) {\hwire[Navy]};\draw[thick] (-0.5,-0.5) -- (-0.5,4.5) -- (4.5,4.5) -- (4.5,-0.5) -- (-0.5,-0.5);\node at (0,-1) {1};\node at (5,4) {4};\node at (1,-1) {2};\node at (5,3) {5};\node at (2,-1) {3};\node at (5,2) {2};\node at (3,-1) {4};\node at (5,1) {3};\node at (4,-1) {5};\node at (5,0) {1};\end{tikzpicture}
\]

\[
\begin{tikzpicture}[scale =.4]\grid{5}{5}\node at (0,5) {\nowire};\node at (1,5) {\are[Orange]};\node at (2,5) {\hwire[Orange]};\node at (3,5) {\hwire[Orange]};\node at (4,5) {\hwire[Orange]};\node at (5,5) {\hwire[Orange]};\node at (0,4) {\are[Orange]};\node at (1,4) {\hwire[Orange]};\node at (1,4) {\vwire[Orange]};\node at (2,4) {\hwire[Orange]};\node at (3,4) {\hwire[Orange]};\node at (4,4) {\hwire[Orange]};\node at (5,4) {\hwire[Orange]};\node at (0,3) {\vwire[Orange]};\node at (1,3) {\vwire[Orange]};\node at (2,3) {\nowire};\node at (3,3) {\nowire};\node at (4,3) {\are[Navy]};\node at (5,3) {\hwire[Navy]};\node at (0,2) {\vwire[Orange]};\node at (1,2) {\vwire[Orange]};\node at (2,2) {\nowire};\node at (3,2) {\nowire};\node at (4,2) {\vwire[Navy]};\node at (5,2) {\are[Navy]};\node at (0,1) {\vwire[Orange]};\node at (1,1) {\vwire[Orange]};\node at (2,1) {\nowire};\node at (3,1) {\are[Navy]};\node at (4,1) {\hwire[Navy]};\node at (4,1) {\vwire[Navy]};\node at (5,1) {\hwire[Navy]};\node at (5,1) {\vwire[Navy]};\node at (0,0) {\vwire[Orange]};\node at (1,0) {\vwire[Orange]};\node at (2,0) {\are[Navy]};\node at (3,0) {\hwire[Navy]};\node at (3,0) {\vwire[Navy]};\node at (4,0) {\hwire[Navy]};\node at (4,0) {\vwire[Navy]};\node at (5,0) {\hwire[Navy]};\node at (5,0) {\vwire[Navy]};\draw[thick] (-0.5,-0.5) -- (-0.5,5.5) -- (5.5,5.5) -- (5.5,-0.5) -- (-0.5,-0.5);\node at (0,-1) {1};\node at (6,5) {2};\node at (1,-1) {2};\node at (6,4) {1};\node at (2,-1) {3};\node at (6,3) {5};\node at (3,-1) {4};\node at (6,2) {6};\node at (4,-1) {5};\node at (6,1) {4};\node at (5,-1) {6};\node at (6,0) {3};\end{tikzpicture}
\; \hspace{0.3in} \;
\begin{tikzpicture}[scale =.4]\grid{5}{5}\node at (0,5) {\nowire};\node at (1,5) {\nowire};\node at (2,5) {\nowire};\node at (3,5) {\are[Orange]};\node at (4,5) {\hwire[Orange]};\node at (5,5) {\hwire[Orange]};\node at (0,4) {\are[Orange]};\node at (1,4) {\hwire[Orange]};\node at (2,4) {\hwire[Orange]};\node at (3,4) {\hwire[Orange]};\node at (3,4) {\vwire[Orange]};\node at (4,4) {\hwire[Orange]};\node at (5,4) {\hwire[Orange]};\node at (0,3) {\vwire[Orange]};\node at (1,3) {\nowire};\node at (2,3) {\nowire};\node at (3,3) {\vwire[Orange]};\node at (4,3) {\are[Navy]};\node at (5,3) {\hwire[Navy]};\node at (0,2) {\vwire[Orange]};\node at (1,2) {\are[Orange]};\node at (2,2) {\hwire[Orange]};\node at (3,2) {\jay[Orange]};\node at (4,2) {\vwire[Navy]};\node at (5,2) {\are[Navy]};\node at (0,1) {\vwire[Orange]};\node at (1,1) {\vwire[Orange]};\node at (2,1) {\nowire};\node at (3,1) {\are[Navy]};\node at (4,1) {\hwire[Navy]};\node at (4,1) {\vwire[Navy]};\node at (5,1) {\hwire[Navy]};\node at (5,1) {\vwire[Navy]};\node at (0,0) {\vwire[Orange]};\node at (1,0) {\vwire[Orange]};\node at (2,0) {\are[Navy]};\node at (3,0) {\hwire[Navy]};\node at (3,0) {\vwire[Navy]};\node at (4,0) {\hwire[Navy]};\node at (4,0) {\vwire[Navy]};\node at (5,0) {\hwire[Navy]};\node at (5,0) {\vwire[Navy]};\draw[thick] (-0.5,-0.5) -- (-0.5,5.5) -- (5.5,5.5) -- (5.5,-0.5) -- (-0.5,-0.5);\node at (0,-1) {1};\node at (6,5) {2};\node at (1,-1) {2};\node at (6,4) {1};\node at (2,-1) {3};\node at (6,3) {5};\node at (3,-1) {4};\node at (6,2) {6};\node at (4,-1) {5};\node at (6,1) {4};\node at (5,-1) {6};\node at (6,0) {3};\end{tikzpicture}
\; \hspace{0.3in}\;
\begin{tikzpicture}[scale =.4]\node at (1,3) {\shade};\node at (1,2) {\shade};\node at (2,3) {\shade};\node at (2,2) {\shade};\grid{5}{5}\node at (0,5) {\nowire};\node at (1,5) {\nowire};\node at (2,5) {\nowire};\node at (3,5) {\are[Orange]};\node at (4,5) {\hwire[Orange]};\node at (5,5) {\hwire[Orange]};\node at (0,4) {\nowire};\node at (1,4) {\nowire};\node at (2,4) {\are[Orange]};\node at (3,4) {\hwire[Orange]};\node at (3,4) {\vwire[Orange]};\node at (4,4) {\hwire[Orange]};\node at (5,4) {\hwire[Orange]};\node at (0,3) {\are[Orange]};\node at (1,3) {\hwire[Orange]};\node at (2,3) {\jay[Orange]};\node at (3,3) {\vwire[Orange]};\node at (4,3) {\are[Navy]};\node at (5,3) {\hwire[Navy]};\node at (0,2) {\vwire[Orange]};\node at (1,2) {\are[Orange]};\node at (2,2) {\hwire[Orange]};\node at (3,2) {\jay[Orange]};\node at (4,2) {\vwire[Navy]};\node at (5,2) {\are[Navy]};\node at (0,1) {\vwire[Orange]};\node at (1,1) {\vwire[Orange]};\node at (2,1) {\nowire};\node at (3,1) {\are[Navy]};\node at (4,1) {\hwire[Navy]};\node at (4,1) {\vwire[Navy]};\node at (5,1) {\hwire[Navy]};\node at (5,1) {\vwire[Navy]};\node at (0,0) {\vwire[Orange]};\node at (1,0) {\vwire[Orange]};\node at (2,0) {\are[Navy]};\node at (3,0) {\hwire[Navy]};\node at (3,0) {\vwire[Navy]};\node at (4,0) {\hwire[Navy]};\node at (4,0) {\vwire[Navy]};\node at (5,0) {\hwire[Navy]};\node at (5,0) {\vwire[Navy]};\draw[thick] (-0.5,-0.5) -- (-0.5,5.5) -- (5.5,5.5) -- (5.5,-0.5) -- (-0.5,-0.5);\node at (0,-1) {1};\node at (6,5) {2};\node at (1,-1) {2};\node at (6,4) {1};\node at (2,-1) {3};\node at (6,3) {5};\node at (3,-1) {4};\node at (6,2) {6};\node at (4,-1) {5};\node at (6,1) {4};\node at (5,-1) {6};\node at (6,0) {3};\end{tikzpicture}
\; \hspace{0.3in} \;
\begin{tikzpicture}[scale =.4]\node at (1,3) {\shade};\node at (1,2) {\shade};\node at (2,3) {\shade};\node at (2,2) {\shade};\grid{5}{5}\node at (0,5) {\vwire[Navy]};\node at (1,5) {\vwire[Navy]};\node at (2,5) {\vwire[Navy]};\node at (3,5) {\el[Navy]};\node at (4,5) {\hwire[Navy]};\node at (4,5) {\vwire[Navy]};\node at (5,5) {\hwire[Navy]};\node at (5,5) {\vwire[Navy]};\node at (0,4) {\vwire[Navy]};\node at (1,4) {\vwire[Navy]};\node at (2,4) {\el[Navy]};\node at (3,4) {\hwire[Navy]};\node at (4,4) {\hwire[Navy]};\node at (4,4) {\vwire[Navy]};\node at (5,4) {\hwire[Navy]};\node at (5,4) {\vwire[Navy]};\node at (0,3) {\el[Navy]};\node at (1,3) {\hwire[Navy]};\node at (1,3) {\vwire[Navy]};\node at (2,3) {\en[Navy]};\node at (3,3) {\nowire};\node at (4,3) {\el[Navy]};\node at (5,3) {\hwire[Navy]};\node at (5,3) {\vwire[Navy]};\node at (0,2) {\nowire};\node at (1,2) {\el[Navy]};\node at (2,2) {\newire[Navy]};\node at (2,2) {\swwire[Navy]};\node at (3,2) {\en[Navy]};\node at (4,2) {\nowire};\node at (5,2) {\el[Navy]};\node at (0,1) {\nowire};\node at (1,1) {\nowire};\node at (2,1) {\vwire[Navy]};\node at (3,1) {\el[Navy]};\node at (4,1) {\hwire[Navy]};\node at (5,1) {\hwire[Navy]};\node at (0,0) {\nowire};\node at (1,0) {\nowire};\node at (2,0) {\el[Navy]};\node at (3,0) {\hwire[Navy]};\node at (4,0) {\hwire[Navy]};\node at (5,0) {\hwire[Navy]};\draw[thick] (-0.5,-0.5) -- (-0.5,5.5) -- (5.5,5.5) -- (5.5,-0.5) -- (-0.5,-0.5);\node at (0,-1) {1};\node at (6,5) {4};\node at (1,-1) {2};\node at (6,4) {3};\node at (2,-1) {3};\node at (6,3) {5};\node at (3,-1) {4};\node at (6,2) {6};\node at (4,-1) {5};\node at (6,1) {1};\node at (5,-1) {6};\node at (6,0) {2};\end{tikzpicture}
\]

\[
\begin{tikzpicture}[scale =.4]\grid{5}{5}\node at (0,5) {\nowire};\node at (1,5) {\are[Orange]};\node at (2,5) {\hwire[Orange]};\node at (3,5) {\hwire[Orange]};\node at (4,5) {\hwire[Orange]};\node at (5,5) {\hwire[Orange]};\node at (0,4) {\are[Orange]};\node at (1,4) {\hwire[Orange]};\node at (1,4) {\vwire[Orange]};\node at (2,4) {\hwire[Orange]};\node at (3,4) {\hwire[Orange]};\node at (4,4) {\hwire[Orange]};\node at (5,4) {\hwire[Orange]};\node at (0,3) {\vwire[Orange]};\node at (1,3) {\vwire[Orange]};\node at (2,3) {\nowire};\node at (3,3) {\nowire};\node at (4,3) {\nowire};\node at (5,3) {\are[Navy]};\node at (0,2) {\vwire[Orange]};\node at (1,2) {\vwire[Orange]};\node at (2,2) {\nowire};\node at (3,2) {\nowire};\node at (4,2) {\are[Navy]};\node at (5,2) {\hwire[Navy]};\node at (5,2) {\vwire[Navy]};\node at (0,1) {\vwire[Orange]};\node at (1,1) {\vwire[Orange]};\node at (2,1) {\nowire};\node at (3,1) {\are[Navy]};\node at (4,1) {\hwire[Navy]};\node at (4,1) {\vwire[Navy]};\node at (5,1) {\hwire[Navy]};\node at (5,1) {\vwire[Navy]};\node at (0,0) {\vwire[Orange]};\node at (1,0) {\vwire[Orange]};\node at (2,0) {\are[Navy]};\node at (3,0) {\hwire[Navy]};\node at (3,0) {\vwire[Navy]};\node at (4,0) {\hwire[Navy]};\node at (4,0) {\vwire[Navy]};\node at (5,0) {\hwire[Navy]};\node at (5,0) {\vwire[Navy]};\draw[thick] (-0.5,-0.5) -- (-0.5,5.5) -- (5.5,5.5) -- (5.5,-0.5) -- (-0.5,-0.5);\node at (0,-1) {1};\node at (6,5) {2};\node at (1,-1) {2};\node at (6,4) {1};\node at (2,-1) {3};\node at (6,3) {6};\node at (3,-1) {4};\node at (6,2) {5};\node at (4,-1) {5};\node at (6,1) {4};\node at (5,-1) {6};\node at (6,0) {3};\end{tikzpicture}
\; \hspace{0.3in} \;
\begin{tikzpicture}[scale =.4]\grid{5}{5}\node at (0,5) {\nowire};\node at (1,5) {\nowire};\node at (2,5) {\nowire};\node at (3,5) {\are[Orange]};\node at (4,5) {\hwire[Orange]};\node at (5,5) {\hwire[Orange]};\node at (0,4) {\are[Orange]};\node at (1,4) {\hwire[Orange]};\node at (2,4) {\hwire[Orange]};\node at (3,4) {\hwire[Orange]};\node at (3,4) {\vwire[Orange]};\node at (4,4) {\hwire[Orange]};\node at (5,4) {\hwire[Orange]};\node at (0,3) {\vwire[Orange]};\node at (1,3) {\nowire};\node at (2,3) {\nowire};\node at (3,3) {\vwire[Orange]};\node at (4,3) {\nowire};\node at (5,3) {\are[Navy]};\node at (0,2) {\vwire[Orange]};\node at (1,2) {\are[Orange]};\node at (2,2) {\hwire[Orange]};\node at (3,2) {\jay[Orange]};\node at (4,2) {\are[Navy]};\node at (5,2) {\hwire[Navy]};\node at (5,2) {\vwire[Navy]};\node at (0,1) {\vwire[Orange]};\node at (1,1) {\vwire[Orange]};\node at (2,1) {\nowire};\node at (3,1) {\are[Navy]};\node at (4,1) {\hwire[Navy]};\node at (4,1) {\vwire[Navy]};\node at (5,1) {\hwire[Navy]};\node at (5,1) {\vwire[Navy]};\node at (0,0) {\vwire[Orange]};\node at (1,0) {\vwire[Orange]};\node at (2,0) {\are[Navy]};\node at (3,0) {\hwire[Navy]};\node at (3,0) {\vwire[Navy]};\node at (4,0) {\hwire[Navy]};\node at (4,0) {\vwire[Navy]};\node at (5,0) {\hwire[Navy]};\node at (5,0) {\vwire[Navy]};\draw[thick] (-0.5,-0.5) -- (-0.5,5.5) -- (5.5,5.5) -- (5.5,-0.5) -- (-0.5,-0.5);\node at (0,-1) {1};\node at (6,5) {2};\node at (1,-1) {2};\node at (6,4) {1};\node at (2,-1) {3};\node at (6,3) {6};\node at (3,-1) {4};\node at (6,2) {5};\node at (4,-1) {5};\node at (6,1) {4};\node at (5,-1) {6};\node at (6,0) {3};\end{tikzpicture}
\; \hspace{0.3in} \;
\begin{tikzpicture}[scale =.4]\node at (1,3) {\shade};\node at (2,3) {\shade};\node at (2,2) {\shade};\node at (1,2) {\shade};\grid{5}{5}\node at (0,5) {\nowire};\node at (1,5) {\nowire};\node at (2,5) {\nowire};\node at (3,5) {\are[Orange]};\node at (4,5) {\hwire[Orange]};\node at (5,5) {\hwire[Orange]};\node at (0,4) {\nowire};\node at (1,4) {\nowire};\node at (2,4) {\are[Orange]};\node at (3,4) {\hwire[Orange]};\node at (3,4) {\vwire[Orange]};\node at (4,4) {\hwire[Orange]};\node at (5,4) {\hwire[Orange]};\node at (0,3) {\are[Orange]};\node at (1,3) {\hwire[Orange]};\node at (2,3) {\jay[Orange]};\node at (3,3) {\vwire[Orange]};\node at (4,3) {\nowire};\node at (5,3) {\are[Navy]};\node at (0,2) {\vwire[Orange]};\node at (1,2) {\are[Orange]};\node at (2,2) {\hwire[Orange]};\node at (3,2) {\jay[Orange]};\node at (4,2) {\are[Navy]};\node at (5,2) {\hwire[Navy]};\node at (5,2) {\vwire[Navy]};\node at (0,1) {\vwire[Orange]};\node at (1,1) {\vwire[Orange]};\node at (2,1) {\nowire};\node at (3,1) {\are[Navy]};\node at (4,1) {\hwire[Navy]};\node at (4,1) {\vwire[Navy]};\node at (5,1) {\hwire[Navy]};\node at (5,1) {\vwire[Navy]};\node at (0,0) {\vwire[Orange]};\node at (1,0) {\vwire[Orange]};\node at (2,0) {\are[Navy]};\node at (3,0) {\hwire[Navy]};\node at (3,0) {\vwire[Navy]};\node at (4,0) {\hwire[Navy]};\node at (4,0) {\vwire[Navy]};\node at (5,0) {\hwire[Navy]};\node at (5,0) {\vwire[Navy]};\draw[thick] (-0.5,-0.5) -- (-0.5,5.5) -- (5.5,5.5) -- (5.5,-0.5) -- (-0.5,-0.5);\node at (0,-1) {1};\node at (6,5) {2};\node at (1,-1) {2};\node at (6,4) {1};\node at (2,-1) {3};\node at (6,3) {6};\node at (3,-1) {4};\node at (6,2) {5};\node at (4,-1) {5};\node at (6,1) {4};\node at (5,-1) {6};\node at (6,0) {3};\end{tikzpicture}
\; \hspace{0.3in}\;
\begin{tikzpicture}[scale =.4]\node at (1,3) {\shade};\node at (2,3) {\shade};\node at (2,2) {\shade};\node at (1,2) {\shade};\grid{5}{5}\node at (0,5) {\vwire[Navy]};\node at (1,5) {\vwire[Navy]};\node at (2,5) {\vwire[Navy]};\node at (3,5) {\el[Navy]};\node at (4,5) {\hwire[Navy]};\node at (4,5) {\vwire[Navy]};\node at (5,5) {\hwire[Navy]};\node at (5,5) {\vwire[Navy]};\node at (0,4) {\vwire[Navy]};\node at (1,4) {\vwire[Navy]};\node at (2,4) {\el[Navy]};\node at (3,4) {\hwire[Navy]};\node at (4,4) {\hwire[Navy]};\node at (4,4) {\vwire[Navy]};\node at (5,4) {\hwire[Navy]};\node at (5,4) {\vwire[Navy]};\node at (0,3) {\el[Navy]};\node at (1,3) {\hwire[Navy]};\node at (1,3) {\vwire[Navy]};\node at (2,3) {\en[Navy]};\node at (3,3) {\nowire};\node at (4,3) {\vwire[Navy]};\node at (5,3) {\el[Navy]};\node at (0,2) {\nowire};\node at (1,2) {\el[Navy]};\node at (2,2) {\newire[Navy]};\node at (2,2) {\swwire[Navy]};\node at (3,2) {\en[Navy]};\node at (4,2) {\el[Navy]};\node at (5,2) {\hwire[Navy]};\node at (0,1) {\nowire};\node at (1,1) {\nowire};\node at (2,1) {\vwire[Navy]};\node at (3,1) {\el[Navy]};\node at (4,1) {\hwire[Navy]};\node at (5,1) {\hwire[Navy]};\node at (0,0) {\nowire};\node at (1,0) {\nowire};\node at (2,0) {\el[Navy]};\node at (3,0) {\hwire[Navy]};\node at (4,0) {\hwire[Navy]};\node at (5,0) {\hwire[Navy]};\draw[thick] (-0.5,-0.5) -- (-0.5,5.5) -- (5.5,5.5) -- (5.5,-0.5) -- (-0.5,-0.5);\node at (0,-1) {1};\node at (6,5) {4};\node at (1,-1) {2};\node at (6,4) {3};\node at (2,-1) {3};\node at (6,3) {6};\node at (3,-1) {4};\node at (6,2) {5};\node at (4,-1) {5};\node at (6,1) {1};\node at (5,-1) {6};\node at (6,0) {2};\end{tikzpicture}
\]
\[
\begin{tikzpicture}[scale =.4]\grid{5}{5}\node at (0,5) {\nowire};\node at (1,5) {\are[Orange]};\node at (2,5) {\hwire[Orange]};\node at (3,5) {\hwire[Orange]};\node at (4,5) {\hwire[Orange]};\node at (5,5) {\hwire[Orange]};\node at (0,4) {\nowire};\node at (1,4) {\vwire[Orange]};\node at (2,4) {\nowire};\node at (3,4) {\are[Orange]};\node at (4,4) {\hwire[Orange]};\node at (5,4) {\hwire[Orange]};\node at (0,3) {\are[Orange]};\node at (1,3) {\hwire[Orange]};\node at (1,3) {\vwire[Orange]};\node at (2,3) {\hwire[Orange]};\node at (3,3) {\hwire[Orange]};\node at (3,3) {\vwire[Orange]};\node at (4,3) {\hwire[Orange]};\node at (5,3) {\hwire[Orange]};\node at (0,2) {\vwire[Orange]};\node at (1,2) {\vwire[Orange]};\node at (2,2) {\nowire};\node at (3,2) {\vwire[Orange]};\node at (4,2) {\nowire};\node at (5,2) {\are[Navy]};\node at (0,1) {\vwire[Orange]};\node at (1,1) {\vwire[Orange]};\node at (2,1) {\nowire};\node at (3,1) {\vwire[Orange]};\node at (4,1) {\are[Navy]};\node at (5,1) {\hwire[Navy]};\node at (5,1) {\vwire[Navy]};\node at (0,0) {\vwire[Orange]};\node at (1,0) {\vwire[Orange]};\node at (2,0) {\are[Navy]};\node at (3,0) {\hwire[Navy]};\node at (3,0) {\vwire[Orange]};\node at (4,0) {\hwire[Navy]};\node at (4,0) {\vwire[Navy]};\node at (5,0) {\hwire[Navy]};\node at (5,0) {\vwire[Navy]};\draw[thick] (-0.5,-0.5) -- (-0.5,5.5) -- (5.5,5.5) -- (5.5,-0.5) -- (-0.5,-0.5);\node at (0,-1) {1};\node at (6,5) {2};\node at (1,-1) {2};\node at (6,4) {4};\node at (2,-1) {3};\node at (6,3) {1};\node at (3,-1) {4};\node at (6,2) {6};\node at (4,-1) {5};\node at (6,1) {5};\node at (5,-1) {6};\node at (6,0) {3};\end{tikzpicture}
\;
\begin{tikzpicture}[scale =.4]\grid{5}{5}\node at (0,5) {\nowire};\node at (1,5) {\are[Orange]};\node at (2,5) {\hwire[Orange]};\node at (3,5) {\hwire[Orange]};\node at (4,5) {\hwire[Orange]};\node at (5,5) {\hwire[Orange]};\node at (0,4) {\nowire};\node at (1,4) {\vwire[Orange]};\node at (2,4) {\nowire};\node at (3,4) {\nowire};\node at (4,4) {\are[Orange]};\node at (5,4) {\hwire[Orange]};\node at (0,3) {\are[Orange]};\node at (1,3) {\hwire[Orange]};\node at (1,3) {\vwire[Orange]};\node at (2,3) {\hwire[Orange]};\node at (3,3) {\hwire[Orange]};\node at (4,3) {\hwire[Orange]};\node at (4,3) {\vwire[Orange]};\node at (5,3) {\hwire[Orange]};\node at (0,2) {\vwire[Orange]};\node at (1,2) {\vwire[Orange]};\node at (2,2) {\nowire};\node at (3,2) {\are[Orange]};\node at (4,2) {\jay[Orange]};\node at (5,2) {\are[Navy]};\node at (0,1) {\vwire[Orange]};\node at (1,1) {\vwire[Orange]};\node at (2,1) {\nowire};\node at (3,1) {\vwire[Orange]};\node at (4,1) {\are[Navy]};\node at (5,1) {\hwire[Navy]};\node at (5,1) {\vwire[Navy]};\node at (0,0) {\vwire[Orange]};\node at (1,0) {\vwire[Orange]};\node at (2,0) {\are[Navy]};\node at (3,0) {\hwire[Navy]};\node at (3,0) {\vwire[Orange]};\node at (4,0) {\hwire[Navy]};\node at (4,0) {\vwire[Navy]};\node at (5,0) {\hwire[Navy]};\node at (5,0) {\vwire[Navy]};\draw[thick] (-0.5,-0.5) -- (-0.5,5.5) -- (5.5,5.5) -- (5.5,-0.5) -- (-0.5,-0.5);\node at (0,-1) {1};\node at (6,5) {2};\node at (1,-1) {2};\node at (6,4) {4};\node at (2,-1) {3};\node at (6,3) {1};\node at (3,-1) {4};\node at (6,2) {6};\node at (4,-1) {5};\node at (6,1) {5};\node at (5,-1) {6};\node at (6,0) {3};\end{tikzpicture}
\;
\begin{tikzpicture}[scale =.4]\grid{5}{5}\node at (0,5) {\nowire};\node at (1,5) {\nowire};\node at (2,5) {\nowire};\node at (3,5) {\are[Orange]};\node at (4,5) {\hwire[Orange]};\node at (5,5) {\hwire[Orange]};\node at (0,4) {\nowire};\node at (1,4) {\are[Orange]};\node at (2,4) {\hwire[Orange]};\node at (3,4) {\jay[Orange]};\node at (4,4) {\are[Orange]};\node at (5,4) {\hwire[Orange]};\node at (0,3) {\are[Orange]};\node at (1,3) {\hwire[Orange]};\node at (1,3) {\vwire[Orange]};\node at (2,3) {\hwire[Orange]};\node at (3,3) {\hwire[Orange]};\node at (4,3) {\hwire[Orange]};\node at (4,3) {\vwire[Orange]};\node at (5,3) {\hwire[Orange]};\node at (0,2) {\vwire[Orange]};\node at (1,2) {\vwire[Orange]};\node at (2,2) {\nowire};\node at (3,2) {\are[Orange]};\node at (4,2) {\jay[Orange]};\node at (5,2) {\are[Navy]};\node at (0,1) {\vwire[Orange]};\node at (1,1) {\vwire[Orange]};\node at (2,1) {\nowire};\node at (3,1) {\vwire[Orange]};\node at (4,1) {\are[Navy]};\node at (5,1) {\hwire[Navy]};\node at (5,1) {\vwire[Navy]};\node at (0,0) {\vwire[Orange]};\node at (1,0) {\vwire[Orange]};\node at (2,0) {\are[Navy]};\node at (3,0) {\hwire[Navy]};\node at (3,0) {\vwire[Orange]};\node at (4,0) {\hwire[Navy]};\node at (4,0) {\vwire[Navy]};\node at (5,0) {\hwire[Navy]};\node at (5,0) {\vwire[Navy]};\draw[thick] (-0.5,-0.5) -- (-0.5,5.5) -- (5.5,5.5) -- (5.5,-0.5) -- (-0.5,-0.5);\node at (0,-1) {1};\node at (6,5) {2};\node at (1,-1) {2};\node at (6,4) {4};\node at (2,-1) {3};\node at (6,3) {1};\node at (3,-1) {4};\node at (6,2) {6};\node at (4,-1) {5};\node at (6,1) {5};\node at (5,-1) {6};\node at (6,0) {3};\end{tikzpicture}
\;
\begin{tikzpicture}[scale =.4]\node at (2,4) {\shade};\node at (2,3) {\shade};\node at (3,4) {\shade};\node at (3,3) {\shade};\grid{5}{5}\node at (0,5) {\nowire};\node at (1,5) {\nowire};\node at (2,5) {\nowire};\node at (3,5) {\are[Orange]};\node at (4,5) {\hwire[Orange]};\node at (5,5) {\hwire[Orange]};\node at (0,4) {\nowire};\node at (1,4) {\are[Orange]};\node at (2,4) {\hwire[Orange]};\node at (3,4) {\jay[Orange]};\node at (4,4) {\are[Orange]};\node at (5,4) {\hwire[Orange]};\node at (0,3) {\nowire};\node at (1,3) {\vwire[Orange]};\node at (2,3) {\are[Orange]};\node at (3,3) {\hwire[Orange]};\node at (4,3) {\hwire[Orange]};\node at (4,3) {\vwire[Orange]};\node at (5,3) {\hwire[Orange]};\node at (0,2) {\are[Orange]};\node at (1,2) {\hwire[Orange]};\node at (1,2) {\vwire[Orange]};\node at (2,2) {\jay[Orange]};\node at (3,2) {\are[Orange]};\node at (4,2) {\jay[Orange]};\node at (5,2) {\are[Navy]};\node at (0,1) {\vwire[Orange]};\node at (1,1) {\vwire[Orange]};\node at (2,1) {\nowire};\node at (3,1) {\vwire[Orange]};\node at (4,1) {\are[Navy]};\node at (5,1) {\hwire[Navy]};\node at (5,1) {\vwire[Navy]};\node at (0,0) {\vwire[Orange]};\node at (1,0) {\vwire[Orange]};\node at (2,0) {\are[Navy]};\node at (3,0) {\hwire[Navy]};\node at (3,0) {\vwire[Orange]};\node at (4,0) {\hwire[Navy]};\node at (4,0) {\vwire[Navy]};\node at (5,0) {\hwire[Navy]};\node at (5,0) {\vwire[Navy]};\draw[thick] (-0.5,-0.5) -- (-0.5,5.5) -- (5.5,5.5) -- (5.5,-0.5) -- (-0.5,-0.5);\node at (0,-1) {1};\node at (6,5) {2};\node at (1,-1) {2};\node at (6,4) {4};\node at (2,-1) {3};\node at (6,3) {1};\node at (3,-1) {4};\node at (6,2) {6};\node at (4,-1) {5};\node at (6,1) {5};\node at (5,-1) {6};\node at (6,0) {3};\end{tikzpicture}
\;
\begin{tikzpicture}[scale =.4]\node at (2,4) {\shade};\node at (2,3) {\shade};\node at (3,4) {\shade};\node at (3,3) {\shade};\grid{5}{5}\node at (0,5) {\vwire[Navy]};\node at (1,5) {\vwire[Navy]};\node at (2,5) {\vwire[Navy]};\node at (3,5) {\el[Navy]};\node at (4,5) {\hwire[Navy]};\node at (4,5) {\vwire[Navy]};\node at (5,5) {\hwire[Navy]};\node at (5,5) {\vwire[Navy]};\node at (0,4) {\vwire[Navy]};\node at (1,4) {\el[Navy]};\node at (2,4) {\hwire[Navy]};\node at (2,4) {\vwire[Navy]};\node at (3,4) {\en[Navy]};\node at (4,4) {\el[Navy]};\node at (5,4) {\hwire[Navy]};\node at (5,4) {\vwire[Navy]};\node at (0,3) {\vwire[Navy]};\node at (1,3) {\nowire};\node at (2,3) {\el[Navy]};\node at (3,3) {\newire[Navy]};\node at (3,3) {\swwire[Navy]};\node at (4,3) {\hwire[Navy]};\node at (5,3) {\hwire[Navy]};\node at (5,3) {\vwire[Navy]};\node at (0,2) {\el[Navy]};\node at (1,2) {\hwire[Navy]};\node at (2,2) {\en[Navy]};\node at (3,2) {\el[Navy]};\node at (4,2) {\en[Navy]};\node at (5,2) {\el[Navy]};\node at (0,1) {\nowire};\node at (1,1) {\nowire};\node at (2,1) {\vwire[Navy]};\node at (3,1) {\nowire};\node at (4,1) {\el[Navy]};\node at (5,1) {\hwire[Navy]};\node at (0,0) {\nowire};\node at (1,0) {\nowire};\node at (2,0) {\el[Navy]};\node at (3,0) {\hwire[Navy]};\node at (4,0) {\hwire[Navy]};\node at (5,0) {\hwire[Navy]};\draw[thick] (-0.5,-0.5) -- (-0.5,5.5) -- (5.5,5.5) -- (5.5,-0.5) -- (-0.5,-0.5);\node at (0,-1) {1};\node at (6,5) {4};\node at (1,-1) {2};\node at (6,4) {5};\node at (2,-1) {3};\node at (6,3) {2};\node at (3,-1) {4};\node at (6,2) {6};\node at (4,-1) {5};\node at (6,1) {3};\node at (5,-1) {6};\node at (6,0) {1};\end{tikzpicture}
\]

%#######################################################
\bibliographystyle{alpha}
\bibliography{references}
\end{document}